\DeclareMathAlphabet\mathcal{OMS}{cmsy}{m}{n}
\DeclareMathAlphabet\mathbfcal{OMS}{cmsy}{b}{n}
\def\BState{\State\hskip-\ALG@thistlm}
\Crefname{equation}{}{}
\crefname{equation}{}{}
\theoremstyle{plain}
 \newtheorem{theorem}{Theorem}
 \newtheorem{proposition}{Proposition} 
 \newtheorem{lemma}{Lemma} 
 \newtheorem{corollary}{Corollary}
\theoremstyle{definition}
\theoremstyle{remark}
  \newtheorem{remark}{Remark}
\DeclareMathOperator*{\diag}{\texttt{diag}}
\newcommand{\bdomega}{\boldsymbol{\omega}}
\newcommand{\bphi}{\boldsymbol{\phi}}
\newcommand{\bx}{\boldsymbol{x}}
\newcommand{\by}{\boldsymbol{y}}
\newcommand{\bdf}{\boldsymbol{f}}
\newcommand{\bdg}{\boldsymbol{g}}
\newcommand{\bdh}{\boldsymbol{h}}
\newcommand{\bdu}{\boldsymbol{u}}
\newcommand{\bdz}{\boldsymbol{z}}
\newcommand{\bLambda}{\boldsymbol{\Lambda}}
\newcommand{\bzero}{\boldsymbol{0}}
\newcommand{\rmd}{\mathrm{d}}
\newcommand{\Diff}{\mathcal{D}} 
\newcommand{\tra}{\boldsymbol{\mathsf{T}}}
\newcommand{\Real}{\mathbb{R}}
\newcommand{\bdK}{\boldsymbol{K}}
\newcommand{\Xray}{\mathcal{P}}
\newcommand{\bXray}{\boldsymbol{\mathcal{P}}}
\newcommand{\bvarphi}{\boldsymbol{\varphi}}
\newcommand{\bdr}{\boldsymbol{r}}
\newcommand{\bddelta}{\boldsymbol{\delta}}
\newcommand{\domain}{\Omega}
\newcommand{\mtbf}{\mathbf{f}}
\newcommand{\mtbg}{\mathbf{g}}
\newcommand{\mtbK}{\mathbf{K}}
\newcommand{\mtbp}{\mathbf{p}}
\newcommand{\mtbb}{\mathbf{b}}
\newcommand{\mtbs}{\mathbf{s}}
\newcommand{\mtbz}{\mathbf{z}}
\newcommand{\mtbw}{\mathbf{w}}
\newcommand{\mtbP}{\mathbf{P}}
\newcommand{\mtbh}{\mathbf{h}}
\begin{document}

\title{AFIRE: Accurate and Fast Image Reconstruction Algorithm for Geometric-inconsistent Multispectral CT}
\author{Yu Gao\thanks{State Key Laboratory of Mathematical Sciences, Academy of Mathematics and Systems Science, Chinese Academy of Sciences, Beijing 100190, China; University of Chinese Academy of Sciences, Beijing 100190, China.} and Chong Chen\footnotemark[1]}

\maketitle

\begin{abstract}
For nonlinear multispectral computed tomography (CT), accurate and fast image reconstruction is challenging when the scanning geometries under different X-ray energy spectra are inconsistent or mismatched. Motivated by this, we propose an Accurate and Fast Image REconstruction (AFIRE) algorithm to address such problems in the case of mildly full scan. From the continuous (resp. discrete) setting, we discover that the derivative operator (gradient) of the involved nonlinear mapping at some special points, for example, at zero, can be represented as a composition (block multiplication) of a diagonal operator (matrix) composed of X-ray transforms (projection matrices) and a very small-scale matrix. Based on these insights, the AFIRE algorithm is proposed by leveraging the simplified Newton method. Under proper conditions, we establish the convergence theory of the proposed algorithm. Furthermore, numerical experiments are also carried out to verify that the proposed algorithm can accurately and effectively reconstruct the basis images in completely geometric-inconsistent dual-energy CT with noiseless and noisy projection data. Particularly, the proposed algorithm significantly outperforms some state-of-the-art methods in terms of accuracy and efficiency. Finally, the flexibility and extensibility of the proposed algorithm are also demonstrated.    
\end{abstract}

\begin{keywords}
Multispectral CT, dual-energy CT, geometric inconsistency, accurate and fast image reconstruction algorithm, large-scale nonlinear inverse problem, one-step method, convergence analysis
\end{keywords}

\section{Introduction}\label{sec:Introduction}

Multispectral CT (MSCT) is an imaging modality aiming to reconstruct material-specific (basis) images from multiple sets of attenuation data collected with different X-ray energy spectra. These basis images can subsequently be used to form virtual monochromatic images (VMIs) or the spatial distributions of linear attenuation coefficient (LAC) at energies of interest by linear combination \cite{Hounsfield73CT,alma76,DE_MECT15}. MSCT has attracted increasing attention and research interest due to the development of hardware and the requirement of applications, as well as its advantages in considering the X-ray energy-dependent information over the conventional CT such as artifact suppression, contrast improvement and object separation \cite{DE_MECT15}. Dual-energy CT (DECT), as the first generation of MSCT, has been implemented in clinical practice for about twenty years \cite{SPCT_23review}. Despite the long history of DECT, its clinical and technical research is still in its infancy \cite{hsieh15}.

It is well-known that the forward data model used in conventional CT assumes that all of the X-ray photons emitting from an X-ray source are monochromatic, namely, have the same energy \cite{Shepp1978CT}. This assumption makes the model linear, which causes the image reconstruction to become a linear inverse problem, but leading to difficulties in artifact suppression and limitations in material differentiation. In real-world CT, however, this assumption is not realistic because the output X-ray beams are polychromatic, consisting of photons with a wide range of energies. The distribution of photon energies is referred to as the energy spectrum. The LAC of the scanned material depends not only on its own elemental composition but also on the energy of the probing X-ray photons. Hence, the forward data model of MSCT may be described as a nonlinear mapping, and the corresponding image reconstruction can be formulated as a nonlinear inverse problem \cite{alma76,maal76,mathCT01,hsieh15,DE_MECT15,Scherzer08}. 

Although development of algorithms for reconstructing basis images in MSCT constitutes an important topic of active research, accurate and fast image reconstruction remains a challenging task, largely because its forward data model is nonlinear. In this work, we understand different reconstruction algorithms from the perspective of scanning geometric parameters under different X-ray energy spectra. 

One situation is that the scanning geometric parameters are consistent or matched under different energy spectra. That is to say, for each X-ray path, under every energy spectrum, there is always one X-ray beam passing through it. Here this case is called geometric-consistent MSCT, in which the image reconstruction can be accurately realized by a two-step method: step (i) estimates the basis sinograms from measurements through solving a lot of very small-scale independent systems of nonlinear equations along different X-ray paths, for instance, using Newton's method, whereas step (ii) reconstructs basis images from the basis sinograms estimated, such as using the filtered back-projection (FBP) method. This method is commonly referred to as two-step data-domain-decomposition (DDD) method. As step (ii) involves the inversion of a linear system as in conventional CT, step (i) constitutes the key step of the DDD method in which a nonlinear data model needs to be inverted. Although many algorithms can invert the nonlinear model accurately, analysis of its solution existence, uniqueness and stability remains difficult. In recent years, several works studied the solution properties such as \cite{Lev_nonunique_07,Alvarez19Invert,Bal_2020,Ding21_invert,gao23solution}. 

The other is that the scanning geometric parameters are inconsistent or mismatched under different energy spectra. Namely, there is at least one X-ray path that all the X-ray beams under some energy spectrum do not pass through it. We call this case the geometric-inconsistent MSCT. A more extreme example is that for every X-ray path, there is always an energy spectrum under which no X-ray beams  pass through it, which can be seen as completely geometric-inconsistent MSCT. The configurations of many commercial devices fall into this situation, such as the fast kV-switching DECT of GE \cite{DE_MECT15}. 

The development of accurate and fast reconstruction algorithms for this case is more challenging than the geometric-consistent case. The authors in \cite{Zou08} matched the originally mismatched neighboring projections of high- and low-energy spectra in DECT, and then proposed a two-step DDD method to invert those one-view-deviated projection pairs. Similarly, using the matched projections as in \cite{Zou08}, an iterative FBP (IFBP) reconstruction method was proposed in \cite{zhao_19}, which applies a single Newton iteration and FBP alternately. However, due to the different geometric parameters of the matched projections, these approaches cannot result in an  accurate image reconstruction. 

In the last two decades, progress has been made in developing one-step methods for the image reconstruction in MSCT. To reconstruct the basis images directly, such kind of method often needs to devise an optimization model from a theoretical perspective, such as Bayesian inference and variational regularization, and then develop an iterative scheme to solve that model. For instance, the authors in \cite{elbakri2002statistical, cai2013full, zhang2013model, LongFessler14, mechlem2017joint} employed the maximum a posteriori estimation within  Bayesian inference to formulate the optimization models, and the authors in \cite{barber2016algorithm,chpan17,chpan21,gao22EPD} constructed the optimization models from the perspective of variational regularization. Because of the nonlinear data model in MSCT, the optimization models designed are generally nonconvex. While these methods can often achieve high image quality through a large number of iterations, they are time-consuming,  and their convergence analyses were mostly not considered. In addition, many studies also consider the problem as solving a large-scale system of nonlinear equations \cite{zhaozz14,gao23NKM}.

In this article, we aim to develop an image reconstruction algorithm of the one-step type that takes into account both accuracy and efficiency, has mathematical guarantee, and is suitable for the geometric-inconsistent MSCT.

The rest of this paper is organized as follows. In \cref{sec:Data-models}, we introduce the forward data models of MSCT. In \cref{sec:propose_alg}, we propose a novel algorithm to solve the image reconstruction problem in geometric-inconsistent MSCT. We then prove the convergence of the proposed algorithm in \cref{sec:convergence_analysis}. In \cref{sec:Numerical}, numerical experiments are carried out to illustrate the performance of the proposed algorithm. The flexibility and extensibility of the proposed algorithm are discussed in \cref{sec:discussion}. Finally, we conclude the paper in \cref{sec:conclucsion}.

\section{Forward data models in multispectral CT}
\label{sec:Data-models} 

In this section, the forward data models in MSCT are described briefly, including the continuous-to-continuous (CC)-data model and the discrete-to-discrete (DD)-data model \cite{alma76,hsieh15,chpan17,gao23solution}.

\subsection{CC-data model}
\label{sec:cc-data-model}
The data in MSCT are acquired from an object scanned with multiple X-ray energy spectra $\{s^{[q]}\}_{q=1}^Q$, where $s^{[q]}$ is the normalized nonnegative distributed function of the $q$-th energy spectrum, and $Q$ denotes the number of used distinct spectra. 

For some given spectrum{\footnote{The effective spectrum of a CT system is indeed the product of the X-ray-tube spectrum and detector-energy response.}}, data can be detected along the X-ray specified by $L(\theta^{[q]})$, where $\theta^{[q]}$ determines the ray direction. The detected data in the absence of other physical factors can be modeled as 
\begin{equation}\label{eq:multimeasurement}
g_{L(\theta^{[q]})} = \ln\int_{0}^{E_{\text{max}}} s^{[q]}(E)\exp\left(-\int_{L(\theta^{[q]})} \mu(\by, E) \rmd l\right) \rmd E,
\end{equation}
where $s^{[q]}$ is nonnegative and satisfying 
\[
\int_{0}^{E_{\text{max}}}s^{[q]}(E) \rmd E = 1,
\] 
and $\mu(\by, E)$ represents the energy-dependent LAC of interest at spatial position $\by \in \Real^n$ ($n=2$ or $3$) and energy $E\in [0, E_{\text{max}}]$, and $\Real^n$ the real Euclidean space, $E_{\text{max}}$ the maximum energy in the scan \cite{BuzugCT08,hsieh15,MSCTbook22}. The function $\mu$ of LAC is decomposed often into a linear combination of basis images $\{f_d\}_{d=1}^D$ as 
\begin{equation}\label{eq:attenuationco}
\mu(\by, E) = \sum_{d=1}^D b_d(E) f_d(\by), 
\end{equation}
where $b_d(E)$ denotes the expansion coefficient, and $D$ the number of basis images \cite{alma76,hsieh15}. For example, for the typical X-ray energy range in diagnostic DECT, $D=2$ is chosen often as photoelectric effect and Compton scatter are the dominant factors contributing to $\mu$ in the physical effect decomposition model. For the basis material decomposition model, $b_d$ denotes the known function of mass attenuation coefficient (MAC) of the $d$-th basis material (e.g., water and bone); and $f_d$ represents the material-equivalent density function of the $d$-th basis material. Importantly, these two models are physically equivalent \cite{hsieh15}. Consequently, using two basis materials is sufficient to accurately describe the linear attenuation characteristics. 

Substituting \cref{eq:attenuationco} into \cref{eq:multimeasurement}, we obtain 
\begin{equation}\label{eq:cc_formula}
g_{L(\theta^{[q]})}= \ln\int_{0}^{E_{\text{max}}} s^{[q]}(E)\exp\left(-\sum_{d=1}^D b_d(E) \int_{L(\theta^{[q]})}f_d(\by) \rmd l \right)\rmd E. 
\end{equation}
The formula in \cref{eq:cc_formula} is referred to as the CC-data model because $\theta^{[q]}$, $E$, and $\by$ are continuous variables \cite{chpan17}.

\subsection{DD-data model}
\label{sec:dd_model}

In practice, data can be detected only for discrete energies and rays, and an image is reconstructed only on an array of discrete pixels/voxels. Hence, it is possible to develop a DD-data model, which may be constructed by implementing a specific discrete scheme into the CC-data model in \cref{eq:cc_formula} \cite{chpan17}. However, in general, there is no direct connection between the two models. 

Assume that the energy domain $[0, E_{\text{max}}]$ is divided into $M$ intervals with equal size $\Delta_E$; the data $g_j^{[q]}$ are detected for discrete directions $\theta_j^{[q]}$ with $j = 1, \dots, J_q$ under the $q$-th spectrum; each basis image $f_d$ is discretized by an array of size $I$. Then a DD-data model is obtained as
\begin{equation}\label{eq:dd_formula}
g_j^{[q]}  = \ln\sum_{m=1}^{M} s^{[q]}_{m} \exp\left(-\sum_{d=1}^D b_{dm} \sum_{i=1}^I p^{[q]}_{ji} f_{di}
\right) \quad \text{for $j = 1, \dots, J_q$,}
\end{equation}
where $s^{[q]}_{m} = s^{[q]}(m\Delta_E)\Delta_E$ satisfying $\sum_{m=1}^{M} s^{[q]}_{m} =1$, $b_{dm} = b_d(m\Delta_E)\Delta_E$, $f_{di}$ denotes the gray value of the $d$-th discrete basis image at voxel $i$, and $p^{[q]}_{ji}$ is often taken as the intersection length of ray $j$ with pixel/voxel $i$ of the image array \cite{chen2020fast}. Here the DD-data model \cref{eq:dd_formula} can be seen as a specific discrete scheme for the CC-data model \cref{eq:cc_formula}.

\section{The proposed algorithm}\label{sec:propose_alg}

In this section, we introduce the MSCT image reconstruction problems and propose new algorithms for the continuous and discrete settings, which correspond to the CC-data model \cref{eq:cc_formula} and the DD-data model \cref{eq:dd_formula}, respectively. 

\subsection{The continuous setting}\label{subsec:continuous_MSCT}

Here we give the MSCT image reconstruction problem in continuous setting based on the CC-data model \cref{eq:cc_formula}, and then present the corresponding algorithm. 

Before that, we introduce the required notation. Let $\mathcal{X}_1,\ldots,\mathcal{X}_D$ be normed vector spaces with norms $\|\cdot\|_{\mathcal{X}_1},\ldots,\|\cdot\|_{\mathcal{X}_D}$, respectively. Their Cartesian product space is given by 
\[
\mathbfcal{X}^D := \mathcal{X}_1\times\cdots\times\mathcal{X}_D = \{[u_1,\ldots,u_D]^{\tra} | u_d\in \mathcal{X}_d, ~ d=1,\ldots,D\}.
\] 
The norm of an element $\bdu=[u_1,\ldots,u_D]^{\tra}$ in this Cartesian product space can be defined as 
\[
\|\bdu\|_{\mathbfcal{X}^D}:=\left(\sum_{d=1}^D \|u_d\|^2_{\mathcal{X}_d}\right)^{1/2}.
\] 
Let $H^{\alpha}(\Real^n)$ be the Sobolev space of order $\alpha$ on $\Real^n$, and $\domain$ be a sufficiently regular open bounded subset in $\Real^n$. Denote  $H_0^{\alpha}(\domain) = \{u\in H^{\alpha}(\Real^n) | \text{supp}(u)\subset \domain\}$, where $\text{supp}(u) := \overline{\{\by\in \Real^n | u(\by) \neq 0\}}$ is the support of $u$.  

\subsubsection{Continuous MSCT image reconstruction problem}\label{subsubsec:cc_problem}

For simplicity, define $\mathbb{T}^n_q := \{(\bdomega, \bx) | \bdomega \in \mathbb{S}^{n-1}_q\subseteq \mathbb{S}^{n-1},\bx \in \Omega_q \subseteq \bdomega^{\bot}\}\subseteq \{(\bdomega, \bx) |\bdomega \in \mathbb{S}^{n-1},\bx \in \bdomega^{\bot}\}=:\mathbb{T}^n$ as the set of scanning geometric parameters under the $q$-th energy spectrum, where $q = 1, \ldots, Q$. Here $\mathbb{S}^{n-1}$ represents the unit sphere in $\Real^n$ and $\bdomega^{\bot}$ denotes the hyperplane perpendicular to $\bdomega$. 
  
Let 
\begin{equation}\label{eq:xray_transform}
    \Xray_q f(\bdomega, \bx) :=\int_{-\infty}^{+\infty} f(\bx + t\bdomega) ~\rmd t, \quad (\bdomega, \bx) \in \mathbb{T}^n_q, 
\end{equation}
which is the X-ray transform of a certain function $f$ defined on $\mathbb{R}^n$, equivalently the Radon transform if $n = 2$ \cite{mathCT01}. Note that if $\mathbb{T}^n_1 = \cdots = \mathbb{T}^n_Q$, then $\Xray_1 = \cdots = \Xray_Q$, this situation is called geometric consistency, otherwise it is called geometric inconsistency. 

For each energy spectrum $q$, denote the X-ray transform in \cref{eq:xray_transform} by the mapping 
\[
\Xray_q : \mathcal{F} = H_0^{-1/2}(\domain) \longrightarrow \tilde{\mathcal{G}}_q = \Xray_q (\mathcal{F}) \subseteq \mathcal{L}^2(\mathbb{T}^n_q) = \mathcal{G}_q. 
\]
As the X-ray $L(\theta^{[q]})$ in \cref{eq:cc_formula} can be parametrized by $(\bdomega, \bx)$, we translate \cref{eq:xray_transform} into  
\begin{equation}\label{eq:xray_transform_fd}
    \Xray_q f(\bdomega, \bx) = \int_{L(\theta^{[q]})} f(\by) ~\rmd l. 
\end{equation}
Using \cref{eq:xray_transform_fd}, the right-hand side of \cref{eq:cc_formula} can be rewritten as  
\begin{equation}\label{eq:imaging_operator}
    K^{[q]}(\bdf) := \ln \int_{0}^{E_{\max}} s^{[q]}(E)\exp\left(-\sum_{d=1}^D  b_d(E) \Xray_q f_{d} \right) \rmd E, 
\end{equation}
where $\bdf = [f_{1}, \ldots, f_{D}]^{\tra}$. 

Consequently, the aim of the continuous MSCT image reconstruction is to reconstruct $\bdf$ from the measured data $\bdg = [g^{[1]},\ldots,g^{[Q]}]^{\tra}$ such that
\begin{equation}\label{eq:measurements}
K^{[q]}(\bdf) = g^{[q]}\quad \text{for}~ q=1,\ldots, Q. 
\end{equation}
Note that \cref{eq:measurements} can be expressed as the following compact form 
\begin{equation}\label{eq:cc_imaging_prob}
   \bdK(\bdf) = \bdg, 
\end{equation}
where 
$\bdK(\bdf) = [K^{[1]}(\bdf),\ldots,K^{[Q]}(\bdf)]^{\tra}$.

\subsubsection{The proposed algorithm for the continuous problem}\label{subsubsec:cc_alg}

Here we propose the AFIRE algorithm for solving the geometric-inconsistent MSCT image reconstruction problem in the continuous case. Let $\mathbfcal{F}^D = \mathcal{F}\times\cdots\times\mathcal{F}$, $\mathbfcal{G}^Q = \mathcal{G}_1\times\cdots\times\mathcal{G}_Q$, and $\tilde{\mathbfcal{G}}^Q = \tilde{\mathcal{G}}_1\times\cdots\times \tilde{\mathcal{G}}_Q$. 

First of all, we have the following proposition. 

\begin{proposition}\label{prop:Frechet_derivative}
Let $\{s^{[q]}\}_{q=1}^Q$ be nonnegative normalized functions and $\{b_d\}_{d=1}^D$ be nonnegative bounded functions. Then for each $q$, the functional in \cref{eq:imaging_operator} can be represented as a mapping $K^{[q]} : \mathbfcal{F}^D \rightarrow \mathcal{G}_q$ and is G\^ateaux differentiable, and its G\^ateaux derivative at $\bdf \in \mathbfcal{F}^D$ for $\bdh = [h_1,\ldots,h_D]^{\tra} \in \mathbfcal{F}^D$ is computed as 
\begin{equation}\label{eq:Frech_deriva}
\Diff K^{[q]}(\bdf, \bdh) = - \int_0^{E_{\max}}  \Phi^{[q]}(\bdf; E) \left( \sum_{d=1}^D b_d(E) \Xray_q h_d\right) \rmd E, 
\end{equation}
where 
\begin{equation}\label{eq:Phi_f}
\Phi^{[q]}(\bdf; E) = \frac{\displaystyle s^{[q]}(E)\exp\left(-\sum\limits_{d=1}^D  b_d(E) \Xray_q f_d\right)}{\displaystyle\int_0^{E_{\max}}  s^{[q]}(E) \exp\left(-\sum\limits_{d=1}^D  b_d(E) \Xray_q f_d\right) \rmd E}.
\end{equation}
Moreover, both the $K^{[q]}$ and $\bdK : \mathbfcal{F}^D \rightarrow \mathbfcal{G}^Q$ are Fréchet differentiable.  
\end{proposition}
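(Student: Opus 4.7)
My plan is to prove the Gâteaux differentiability by direct computation, and then upgrade to Fréchet differentiability by showing continuity of the Gâteaux derivative with respect to the base point (or by a direct Taylor-style remainder estimate).

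\textbf{Well-posedness as a mapping.} First I would verify that for each fixed $\bdf \in \mathbfcal{F}^D$, the quantity $K^{[q]}(\bdf)$ is a well-defined element of $\mathcal{G}_q = \mathcal{L}^2(\mathbb{T}^n_q)$. The classical mapping property of the X-ray transform gives continuity $\Xray_q : H_0^{-1/2}(\domain) \to \mathcal{L}^2(\mathbb{T}^n_q)$, so each $\Xray_q f_d$ is in $\mathcal{L}^2(\mathbb{T}^n_q)$. Using the normalization $\int s^{[q]}(E) \, \rmd E = 1$, the boundedness of the $b_d$, and Jensen's inequality applied to the convex function $\exp(-\cdot)$ combined with the pointwise sign of $\sum_d b_d \Xray_q f_d$, one obtains $\mathcal{L}^2$-control on the integrand and on its logarithm (in particular the argument of $\ln$ is pointwise strictly positive almost everywhere thanks to the exponential being strictly positive).

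\textbf{Computing the Gâteaux derivative.} For fixed $\bdf, \bdh \in \mathbfcal{F}^D$, I would set
\[
\varphi(t) := K^{[q]}(\bdf + t\bdh) = \ln \int_0^{E_{\max}} s^{[q]}(E)\exp\!\left(-\sum_{d=1}^D b_d(E)(\Xray_q f_d + t \Xray_q h_d)\right)\rmd E
\]
and compute $\varphi'(0)$ by differentiation under the integral sign. The linearity of $\Xray_q$ makes the inner derivative $-\sum_d b_d(E) \Xray_q h_d$. The key technical step is justifying the interchange of $\partial_t$ and $\int_0^{E_{\max}} \rmd E$: this follows from a dominated convergence argument, where the boundedness of $b_d$, the integrability of $s^{[q]}$, and the local boundedness of the exponential (in $t$) on the compact interval $[0, E_{\max}]$ yield an integrable dominant. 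Dividing by the denominator $\int_0^{E_{\max}} s^{[q]}(E)\exp(-\sum_d b_d \Xray_q f_d)\,\rmd E$ produces the weight $\Phi^{[q]}(\bdf; E)$ in \cref{eq:Phi_f}, giving exactly \cref{eq:Frech_deriva}. Linearity of this expression in $\bdh$ is immediate from the linearity of each $\Xray_q$.

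\textbf{Upgrading to Fréchet.} The cleanest route is to show that the Gâteaux derivative $\Diff K^{[q]}(\bdf, \cdot) : \mathbfcal{F}^D \to \mathcal{G}_q$ is bounded linear in $\bdh$ and continuous in $\bdf$, which (on the normed space $\mathbfcal{F}^D$) implies Fréchet differentiability. Boundedness in $\bdh$ follows from $\|\Phi^{[q]}(\bdf; E)\|_{\mathcal{L}^\infty} \le$ (explicit constant depending on $s^{[q]}$ and the denominator), the boundedness of $b_d$, and the continuity of $\Xray_q$ from $H_0^{-1/2}(\domain)$ to $\mathcal{L}^2(\mathbb{T}^n_q)$. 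Continuity of $\bdf \mapsto \Phi^{[q]}(\bdf; E)$ (and hence of the full derivative expression) in the appropriate $\mathcal{L}^2$ sense follows again from dominated convergence together with continuity of the exponential and the fact that $\Xray_q(f_d^{(k)}) \to \Xray_q(f_d)$ in $\mathcal{L}^2$. Once each $K^{[q]}$ is Fréchet differentiable into $\mathcal{G}_q$, the product map $\bdK : \mathbfcal{F}^D \to \mathbfcal{G}^Q$ is Fréchet differentiable componentwise with the product norm on $\mathbfcal{G}^Q$.

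\textbf{Anticipated obstacle.} The main technical subtlety I expect is not the formal calculus but the functional-analytic justifications: precisely controlling $\exp(-\sum_d b_d \Xray_q f_d)$ in $\mathcal{L}^2(\mathbb{T}^n_q)$ when $\Xray_q f_d$ is merely an $\mathcal{L}^2$ function (hence possibly unbounded pointwise), and producing a uniform-in-$\bdh$ (for $\|\bdh\|$ small) integrable dominant for both the dominated-convergence step in the derivative and for the remainder estimate needed to pass from Gâteaux to Fréchet. This requires exploiting the boundedness of $b_d$, the compactness of $[0, E_{\max}]$, the normalization of $s^{[q]}$, and if necessary restricting to a neighborhood of $\bdf$ in $\mathbfcal{F}^D$ to obtain uniform constants.
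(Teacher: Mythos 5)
Your overall route --- compute the G\^ateaux derivative by differentiating $t \mapsto K^{[q]}(\bdf + t\bdh)$ under the integral sign, then upgrade to Fr\'echet differentiability via boundedness in $\bdh$ and continuity in $\bdf$ of the derivative --- is the same as the paper's (which invokes the standard ``continuously G\^ateaux differentiable implies Fr\'echet differentiable'' result and then passes to $\bdK$ componentwise). The problem is that you leave as an ``anticipated obstacle'' precisely the estimate on which the whole proof turns, and the target you propose for it is the wrong one. You aim for ``$\mathcal{L}^2$-control on the integrand,'' i.e.\ on $\exp\bigl(-\sum_{d} b_d(E)\,\Xray_q f_d\bigr)$ as a function of $(\bdomega,\bx)$. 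Since $\Xray_q f_d$ is merely an element of $\mathcal{L}^2(\mathbb{T}^n_q)$ and need not be nonnegative or bounded (elements of $\mathcal{F}=H_0^{-1/2}(\domain)$ carry no sign constraint), $e^{-u}$ with $u\in\mathcal{L}^2$ need not lie in any $\mathcal{L}^p$; no such control is available, and none is needed. Jensen's inequality alone gives only a one-sided bound on the logarithm.

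The missing idea is to never estimate the exponential at all. Freeze $(\bdomega,\bx)$ and write $K^{[q]}(\bdf)(\bdomega,\bx)=H_q(\bdz)$ with $z_d=\Xray_q f_d(\bdomega,\bx)$, where $H_q(\bdz)=\ln\int_0^{E_{\max}} s^{[q]}(E)\exp\bigl(-\sum_{d} b_d(E)z_d\bigr)\rmd E$. The gradient of $H_q$ is a $\Phi^{[q]}$-weighted average of the functions $b_d(E)$, and since $\Phi^{[q]}(\Cdot;E)$ is a probability density in $E$, one gets the uniform bound $\|\nabla H_q(\bdz)\|_1\le\sum_{d}\sup_E b_d(E)$ for \emph{all} $\bdz$. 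Hence $H_q$ is globally Lipschitz, $H_q(\bzero)=0$ by the normalization of $s^{[q]}$, and therefore $|K^{[q]}(\bdf)(\bdomega,\bx)|\le\bigl(\sum_{d}\sup_E b_d\bigr)\sum_{d}|\Xray_q f_d(\bdomega,\bx)|$, which lies in $\mathcal{L}^2(\mathbb{T}^n_q)$ by the continuity of $\Xray_q:H_0^{-1/2}(\domain)\to\mathcal{L}^2(\mathbb{T}^n_q)$. The same normalization gives $\|\Diff K^{[q]}(\bdf,\bdh)\|_{\mathcal{G}_q}\le\bigl(\sup_{d,E}b_d\bigr)\,C\sum_{d}\|h_d\|_{\mathcal{F}}$ uniformly in $\bdf$, and it dominates the difference quotients pointwise by $\bigl(\sum_d\sup_E b_d\bigr)\sum_d|\Xray_q h_d|$, so your pointwise dominated-convergence computation upgrades to convergence in the $\mathcal{G}_q$ norm. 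With this single estimate supplied, the rest of your plan goes through essentially as the paper's proof does.
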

\begin{proof}
    See \cref{proof:G_F_diff} for the proof.
\end{proof} 

\paragraph{Our insight} We start with analyzing the G\^ateaux derivative of $K^{[q]}$ in \cref{eq:Frech_deriva}. Notice that the $\Phi^{[q]} (\bdf; E)$ in \cref{eq:Phi_f} is actually related to the geometric parameter $(\bdomega,\bx)\in \mathbb{T}^n_q$ since the term $\Xray_q f_d$ depends on that parameter as in \cref{eq:xray_transform_fd}. For a certain real constant set $C_d^{[q]}\}_{d=1}^D$, if there exists one $\hat{\bdf}=[\hat{f}_1,\ldots,\hat{f}_D]^{\tra} \in \mathbfcal{F}^D$ such that  
\begin{equation}\label{eq:cc_special_points}
    \Xray_q \hat{f}_d(\bdomega,\bx) \equiv C_d^{[q]}, \quad\forall (\bdomega,\bx)\in \mathbb{T}^n_q, 
\end{equation}
then $\Phi^{[q]} (\hat{\bdf}; E)$ is independent of the geometric parameter $(\bdomega,\bx)$. Together with the linearity of $\Xray_q$, the G\^ateaux derivative at $\hat{\bdf}$ for $\bdh$ can be rewritten as
\begin{equation}\label{eq:linear_DK}
    \Diff K^{[q]} (\hat{\bdf},\bdh) = - \Xray_q\left(\sum_{d=1}^D \varphi^{[q]}_d h_d\right). 
\end{equation}
Here  
\begin{equation}\label{eq:phi_q_d}
 \varphi^{[q]}_d = \int_0^{E_{\max}}   \Phi^{[q]} (\hat{\bdf}; E) b_d(E) ~\rmd E, 
\end{equation}
where 
\[ 
\Phi^{[q]} (\hat{\bdf}; E) = \frac{\displaystyle s^{[q]}(E)\exp\left(-\sum\limits_{d=1}^D  b_d(E) C_d^{[q]}\right)}{\displaystyle\int_0^{E_{\max}}  s^{[q]}(E) \exp\left(-\sum\limits_{d=1}^D  b_d(E) C_d^{[q]}\right) \rmd E}.
\]
\begin{remark}\label{rmk:special_point_CC}
    The simplest example is taking $\hat{\bdf} = \bzero\in\mathbfcal{F}^D$ for all of the $C_d^{[q]} = 0$, $d = 1, \ldots, D$ and $q = 1, \ldots, Q$ that satisfies the condition of \cref{eq:cc_special_points}. Although there may be other points that satisfy this condition, their explicit identification is not critical and is beyond the scope of the current research. This is because both the practical implementation and the theoretical convergence of the proposed algorithm are independent of the explicit form of such a point. 
\end{remark}

Following the insight above, the G\^ateaux derivative of $\bdK$ at the $\hat{\bdf}$ determined by \cref{eq:cc_special_points} can be given as the mapping below
\begin{align}\label{eq:map_0}
    \nonumber \Diff \bdK(\hat{\bdf},\cdot) : \mathbfcal{F}^D &\longrightarrow \tilde{\mathbfcal{G}}^Q \\
    \bdh &\longmapsto -\bXray(\bvarphi\bdh),
\end{align}
where $\bXray = \diag(\Xray_1, \ldots, \Xray_Q)$ denotes a diagonal operator composed of $Q$ X-ray transforms, and $\bvarphi = \bigl(\varphi^{[q]}_d\bigr)_{Q\times D}$ represents the constant matrix with entries $\varphi^{[q]}_d$ given in \cref{eq:phi_q_d}.  

To proceed, we study the invertibility of $\Diff \bdK(\hat{\bdf},\cdot)$. We have the following result. 

\begin{proposition}\label{prop:0_deriva_invert}
Assume that $Q = D$, the conditions in \cref{prop:Frechet_derivative} hold, and the X-ray transform $\Xray_q : \mathcal{F} \rightarrow \tilde{\mathcal{G}}_q$ for each $q$ and the matrix $\bvarphi$ are invertible. Then the $\Diff \bdK(\hat{\bdf},\cdot)$ in \cref{eq:map_0} is invertible. 
\end{proposition}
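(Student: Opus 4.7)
The plan is to view $\Diff \bdK(\hat{\bdf},\cdot)$ as a composition of two linear operators, each of which is invertible under the stated hypotheses. I would define
\[
M_{\bvarphi} : \mathbfcal{F}^D \longrightarrow \mathbfcal{F}^Q, \qquad (M_{\bvarphi}\bdh)_q := \sum_{d=1}^D \varphi^{[q]}_d h_d,
\]
so that the map in \cref{eq:map_0} factors as $\Diff \bdK(\hat{\bdf},\cdot) = -\,\bXray \circ M_{\bvarphi}$. Since $Q = D$ and $\bvarphi$ is invertible as a $D \times D$ matrix, its inverse $\bvarphi^{-1}$ induces the analogously defined operator $M_{\bvarphi^{-1}} : \mathbfcal{F}^Q \to \mathbfcal{F}^D$, and a direct componentwise computation using only the vector-space structure of $\mathcal{F}$ yields $M_{\bvarphi} \circ M_{\bvarphi^{-1}} = I_{\mathbfcal{F}^Q}$ and $M_{\bvarphi^{-1}} \circ M_{\bvarphi} = I_{\mathbfcal{F}^D}$, so $M_{\bvarphi}$ is an isomorphism.

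Next I would handle the diagonal factor. Since each $\Xray_q : \mathcal{F} \to \tilde{\mathcal{G}}_q$ is invertible by assumption, the diagonal operator $\bXray : \mathbfcal{F}^Q \to \tilde{\mathbfcal{G}}^Q$ admits the componentwise candidate inverse $\bXray^{-1} := \diag(\Xray_1^{-1},\ldots,\Xray_Q^{-1})$, which is immediately verified to be a two-sided inverse. Composing the two isomorphisms then gives the invertibility of $\Diff \bdK(\hat{\bdf},\cdot) : \mathbfcal{F}^D \to \tilde{\mathbfcal{G}}^Q$, with explicit inverse
\[
\bigl(\Diff \bdK(\hat{\bdf},\cdot)\bigr)^{-1} = -\,M_{\bvarphi^{-1}} \circ \bXray^{-1}.
\]

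No deep obstacle appears here, since the hypotheses directly supply invertibility of both factors; the proof reduces to careful bookkeeping between the finite-dimensional matrix action on $\mathbfcal{F}^D$ and the diagonal X-ray transform on $\mathbfcal{F}^Q$, while keeping the domains and codomains consistent with \cref{eq:map_0}. The one subtlety to watch is confirming that the composition actually lands in $\tilde{\mathbfcal{G}}^Q$ rather than a larger product space, which is immediate from the definition $\tilde{\mathcal{G}}_q = \Xray_q(\mathcal{F})$ and the linearity of $\Xray_q$ used when passing $M_{\bvarphi} \bdh$ through $\bXray$.
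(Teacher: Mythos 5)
Your proof is correct and follows essentially the same route as the paper: factor $\Diff \bdK(\hat{\bdf},\cdot)$ as $-\bXray$ composed with the constant-matrix action of $\bvarphi$, invert each factor, and compose, yielding exactly the inverse $-\bvarphi^{-1}\bXray^{-1}$ given in \cref{eq:inverse_map}. The paper's own proof is just a terser statement of the same argument, so your version merely adds explicit bookkeeping.
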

\begin{proof}
Since $\Xray_1, \ldots, \Xray_Q$ are invertible, $\bXray$ is invertible. Combining with the invertibility of $\bvarphi$, we obtain the invertibility of $\Diff \bdK(\hat{\bdf},\cdot)$.
\end{proof} 

\begin{remark}\label{rem:Q_D_equality}
If $\mathbb{S}^{n-1}_q \subseteq \mathbb{S}^{n-1}$ meets every equatorial circle of $\mathbb{S}^{n-1}$, it follows from \cite[theorem 2.11]{natterer2001mathematical} that $\Xray_q : \mathcal{F} \rightarrow \tilde{\mathcal{G}}_q$ is invertible. The condition of $Q=D$ means that the number of used energy spectra should be equal to the number of basis images, which is commonly satisfied in clinical applications. For example, $Q = D = 2$ in DECT. 
\end{remark}

Moreover, denote $\Xray^{-1}_q$ as the inverse of $\Xray_q$, and naturally
\[
\bXray^{-1} = \diag(\Xray_1^{-1}, \ldots, \Xray_Q^{-1})
\] 
is the inverse of $\bXray$. From \cref{prop:0_deriva_invert}, we can obtain the inverse mapping of $\Diff \bdK(\hat{\bdf},\cdot)$ as    
\begin{align}\label{eq:inverse_map}
\nonumber (\Diff \bdK)^{-1}(\hat{\bdf},\cdot) : \tilde{\mathbfcal{G}}^Q &\longrightarrow \mathbfcal{F}^D \\
\bdr &\longmapsto  - \bvarphi^{-1}\bXray^{-1}(\bdr). 
\end{align}
Hence, by leveraging the simplified Newton method \cite{Deuflhard_11,Iter_NonEq_book}, the proposed iterative scheme for solving \cref{eq:cc_imaging_prob} can be formulated as 
\begin{equation}\label{eq:iterative_scheme}
\bdf^{k+1} = \bdf^{k} - \bvarphi^{-1}\bXray^{-1}\bigl(\bdg - \bdK(\bdf^{k})\bigr). 
\end{equation}
Note that \cref{eq:iterative_scheme} is well-defined if $\bdg - \bdK(\bdf^{k})\in\tilde{\mathbfcal{G}}^Q$ for $\bdf^{k}\in\mathbfcal{F}^D$, and the standard simplified Newton method considers the derivative at the initial point $\bdf^0$. Based on the iterative scheme in \cref{eq:iterative_scheme}, we present the algorithm AFIRE for solving the continuous MSCT image reconstruction problem in \cref{alg:solve_cc_prob}. 

\begin{algorithm}[htb]
\caption{The proposed algorithm for solving the continuous problem in \cref{eq:measurements} (or \cref{eq:cc_imaging_prob}).}
\label{alg:solve_cc_prob}
\begin{algorithmic}[1]
\State \emph{Initialize}: Given $Q = D$, the measured data $\{g^{[q]}\}_{q=1}^Q$, the distributions of energy spectra $\{s^{[q]}\}_{q=1}^Q$, the MAC functions $\{b_d\}_{d=1}^D$ and a certain real constant set $\{C_d\}_{d=1}^D$. Initialize the basis image $\bdf^{0}$. Let $k \gets 0$. 

\State Compute the constant matrix $\bvarphi = (\varphi^{[q]}_d)_{Q\times D}$ by \cref{eq:phi_q_d}
and the inverse $\bvarphi^{-1}$. 
\State \emph{Loop}:
\State \quad Compute the residual 
\begin{equation*}
    \begin{bmatrix}
    \varLambda_1^k \\
    \vdots \\
    \varLambda_Q^k
    \end{bmatrix} \longleftarrow \begin{bmatrix}
    g^{[1]} - K^{[1]}(\bdf^{k}) \\
    \vdots \\
    g^{[Q]} - K^{[Q]}(\bdf^{k})
    \end{bmatrix}.
\end{equation*}

\State \quad Update $\bdf^{k+1}$ by
 \begin{equation*}
        \begin{bmatrix}
            f_1^{k+1} \\
            \vdots \\
            f_D^{k+1}
        \end{bmatrix} \longleftarrow \begin{bmatrix}
            f_1^{k} \\
            \vdots \\
            f_D^{k}
        \end{bmatrix}
            - \bvarphi^{-1}  
            \begin{bmatrix}
            \Xray^{-1}_1\varLambda_1^k \\
            \vdots \\
            \Xray^{-1}_Q\varLambda_Q^k
            \end{bmatrix}.
    \end{equation*}
\State \quad If some given termination condition is satisfied, then \textbf{output} $\bdf^{k+1}$; 
\newline \indent \hspace{-3.5mm}otherwise, let $k \gets k+1$, \textbf{goto} \emph{Loop}.
\end{algorithmic}
\end{algorithm}

\begin{remark}\label{rem:special_point}
    In \cref{alg:solve_cc_prob}, one can specifically choose the $C_d^{[q]} = 0$ for $d = 1, \ldots, D$ and $q = 1, \ldots, Q$ with $\hat{\bdf} = \bzero\in\mathbfcal{F}^D$ to fulfill the condition of \cref{eq:cc_special_points}. The corresponding formula of \cref{eq:phi_q_d} becomes 
    \[
    \varphi^{[q]}_d = \int_0^{E_{\max}}   s^{[q]}(E) b_d(E)~\rmd E. 
    \]
\end{remark}

\subsection{The discrete setting}\label{sec:discrete_MSCT}

Here, we start from the DD-data model \cref{eq:dd_formula} and then present the associated algorithm for solving the geometric-inconsistent MSCT image reconstruction problem under the mildly full-scan case.

\subsubsection{Discrete MSCT image reconstruction problem}\label{subsubsec:dd_prob}

As described in \cref{sec:dd_model}, for the discrete case, the forward operator of ray $j$ under the $q$-th energy spectrum may be formulated as 
\begin{equation}\label{eq:discrete_imaging_op}
K_j^{[q]}(\mtbf) = \ln\sum_{m=1}^{M} s^{[q]}_{m} \exp\left(-\sum_{d=1}^D b_{dm} \sum_{i=1}^I p^{[q]}_{ji} f_{di}\right).
\end{equation}
Here $\mtbf := [\mtbf_{1}^{\tra}, \ldots, \mtbf_{D}^{\tra}]^{\tra}$, where $\mtbf_{d}=[f_{d1},\ldots, f_{dI}]^{\tra}$ denotes the $d$-th discretized basis image. Hence, the discrete MSCT image reconstruction problem is to determine $\mtbf$ from the measured data $\{g_j^{[q]}\}_{j,q=1}^{J_q, Q}$ by the system of nonlinear equations as below
\begin{equation}\label{eq:discrete_imaging_prob}
    K_j^{[q]}(\mtbf) = g_j^{[q]} \quad\text{for}~j=1,\ldots,J_q~\text{and}~q=1, \ldots,Q. 
\end{equation}
Here $J_q$ is the number of rays under the $q$-th energy spectrum. The compact form of \cref{eq:discrete_imaging_prob} can be expressed as 
\begin{equation}\label{eq:dd_imaging_prob}
    \mtbK(\mtbf) = \mtbg. 
\end{equation}
Here 
\[
    \mtbK(\mtbf) = \begin{bmatrix}
        \mtbK^{[1]}(\mtbf) \\
        \vdots \\
       \mtbK^{[Q]}(\mtbf) 
        \end{bmatrix}, 
    \mtbK^{[q]}(\mtbf) = \begin{bmatrix}
        K_1^{[q]}(\mtbf) \\
        \vdots \\
        K_{J_q}^{[q]}(\mtbf) 
        \end{bmatrix}, 
    \mtbg = \begin{bmatrix}
        \mtbg^{[1]} \\
        \vdots \\
        \mtbg^{[Q]} 
        \end{bmatrix},
    \mtbg^{[q]} = \begin{bmatrix}
        g^{[q]}_1 \\
        \vdots \\
        g^{[q]}_{J_q} 
        \end{bmatrix}.     
\]

Note that if $J_1 = \cdots = J_Q$ and $p^{[1]}_{ji} = \cdots = p^{[Q]}_{ji}$ for any $(j, i)$, this situation is called geometric consistency, otherwise it is called geometric inconsistency. 

\begin{remark}
    In the geometric-consistent case, the same scanning geometries across different energy spectra allow \cref{eq:dd_imaging_prob} to be divided into a lot of very small-scale independent subsystems along each X-ray path, enabling high solving efficiency via the two-step DDD method \cite{gao23solution}. In contrast, the geometric inconsistency prevents this kind of division, requiring to directly solve a large-scale nonlinear system with $\sum_{q=1}^Q J_q$ equations (e.g., 294,912 equations considered in \cref{subsec:noiseless_forbild}). If the special structure is ignored, solving such a large-scale nonlinear problem with high precision is quite time-consuming. 
\end{remark}

\subsubsection{The proposed algorithm for the discrete problem}\label{subsubsec:dd_alg}

For brevity, let us introduce the following notation, for $q=1,\ldots,Q$ and $d=1,\ldots, D$, 
\begin{align*}
\mtbp_j^{[q]} &= [p_{j1}^{[q]}, \ldots, p_{jI}^{[q]}]^{\tra}, \quad \mtbb_d = [b_{d1}, \ldots, b_{dM}]^{\tra}, \quad \mtbs^{[q]} = [s_1^{[q]}, \ldots, s_M^{[q]}]^{\tra}, \\
z_{jm}^{[q]}(\mtbf) &= -\sum_{d=1}^{D}b_{dm}(\mtbp_j^{[q]})^{\tra}\mtbf_{d}, \quad \exp\bigl(\mtbz_j^{[q]}(\mtbf)\bigr) = \bigl[\exp\bigl(z_{j1}^{[q]}(\mtbf)\bigr), \ldots, \exp\bigl(z_{jM}^{[q]}(\mtbf)\bigr)\bigr]^{\tra}. 
\end{align*}
By \cref{eq:discrete_imaging_op}, the gradient of $K_j^{[q]}(\mtbf)$ is computed as 
\begin{equation}\label{eq:gradient_K_discr}
\nabla K_j^{[q]}(\mtbf) =  - \begin{bmatrix}
        \bigl(\mtbw_j^{[q]}(\mtbf)^{\tra}\mtbb_1\bigr)\mtbp_j^{[q]} \\
        \vdots \\
        \bigl(\mtbw_j^{[q]}(\mtbf)^{\tra}\mtbb_D\bigr)\mtbp_j^{[q]} 
        \end{bmatrix}, 
\end{equation}
where 
\begin{equation}\label{eq:omega_discr}
\mtbw_j^{[q]}(\mtbf) = \frac{\mtbs^{[q]}\odot\exp\bigl(\mtbz_j^{[q]}(\mtbf)\bigr)}{(\mtbs^{[q]})^{\tra}\exp\bigl(\mtbz_j^{[q]}(\mtbf)\bigr)}, 
\end{equation}
and $\odot$ denotes the operation of the elementwise product. 

\paragraph{Our insight} Similar to the continuous setting, for a certain real constant set $\{C_d^{[q]}\}_{d,q=1}^{D,Q}$, if there exists $\hat{\mtbf} = [\hat{\mtbf}_1^{\tra}, \ldots, \hat{\mtbf}_D^{\tra}]^{\tra}$ such that  
\begin{equation}\label{eq:dd_special_points}
    \bigl(\mtbp_j^{[q]}\bigr)^{\tra} \hat{\mtbf}_d \equiv C_d^{[q]}, \quad \forall j \in \{1, \ldots, J_q\}, 
\end{equation}
then, using \cref{eq:dd_special_points}, we have 
\[
z_{jm}^{[q]}(\hat{\mtbf}) = - \sum_{d=1}^{D}b_{dm}C_d^{[q]}, 
\]
and hence 
\begin{equation}\label{eq:exp_z_discr}
\exp\bigl(\mtbz_j^{[q]}(\hat{\mtbf})\bigr) = \Bigl[\exp\Bigl(- \sum_{d=1}^{D}b_{d1}C_d^{[q]}\Bigr), \ldots, \exp\Bigl(-\sum_{d=1}^{D}b_{dM}C_d^{[q]}\Bigr)\Bigr]^{\tra}. 
\end{equation}
By \cref{eq:exp_z_discr}, from \cref{eq:omega_discr}, it is easy to figure out that 
\[
\mtbw_j^{[q]}(\hat{\mtbf}) = \frac{\displaystyle\mtbs^{[q]}\odot\exp\bigl(- \sum_{d=1}^{D}b_{dm}C_d^{[q]}\bigr)}{\displaystyle(\mtbs^{[q]})^{\tra}\exp\bigl(- \sum_{d=1}^{D}b_{dm}C_d^{[q]}\bigr)}, 
\]
which is independent of the index $j$ (namely, the concrete ray). For short, let  
\begin{align*}
\hat{\mtbw}^{[q]} := \mtbw_j^{[q]}(\hat{\mtbf}).
\end{align*}
\begin{remark}
    The simplest example that satisfying the condition of \cref{eq:dd_special_points} is taking $\hat{\mtbf} = \bzero$, which leads to $C_d^{[q]} = 0$ for any $q$ and $d$. Similarly to \cref{rmk:special_point_CC}, the explicit identification of other such points is not critical and is beyond the scope of this work. 
\end{remark}

Considering \cref{eq:gradient_K_discr} at the $\hat{\mtbf}$ in \cref{eq:dd_special_points}, we obtain that 
\[ 
\nabla K_j^{[q]}(\hat{\mtbf}) =  - \begin{bmatrix}
        \phi^{[q]}_1\mtbp_j^{[q]} \\
        \vdots \\
       \phi^{[q]}_D\mtbp_j^{[q]} 
    \end{bmatrix}, 
    \] 
    where 
    \begin{equation}\label{eq:dd_phi}
    \phi^{[q]}_d := (\hat{\mtbw}^{[q]})^{\tra}\mtbb_d  
    = \frac{\displaystyle\sum_{m=1}^{M}b_{dm}s_m^{[q]}\exp\bigl(- \sum_{d=1}^{D}b_{dm}C_d^{[q]}\bigr)}{\displaystyle\sum_{m=1}^{M}s_m^{[q]}\exp\bigl(- \sum_{d=1}^{D}b_{dm}C_d^{[q]}\bigr)}.
    \end{equation}
Then the Jacobian matrix of $\mtbK^{[q]}$ at $\hat{\mtbf}$ can be written as the following form 
\begin{align*} 
    \nabla \mtbK^{[q]}(\hat{\mtbf}) = - \begin{bmatrix}
    \phi^{[q]}_1\mtbP_q & \cdots& \phi^{[q]}_D\mtbP_q  
    \end{bmatrix}, 
\end{align*}
where $\mtbP_q = [\mtbp_1^{[q]}, \dots, \mtbp_{J_q}^{[q]}]^{\tra}$ is the projection matrix under the $q$-th energy spectrum. Note that $\mtbP_1 = \cdots = \mtbP_Q$ in the geometric-consistent case, but it fails to hold in the geometric-inconsistent case.

Furthermore, the associated Jacobian matrix of $\mtbK$ at $\hat{\mtbf}$ is 
\begin{equation}\label{eq:dd_nabla_K0}
    \nabla\mtbK(\hat{\mtbf}) =  -\begin{bmatrix}
    \phi^{[1]}_1\mtbP_{1} & \cdots & \phi^{[1]}_D\mtbP_{1} \\
    \vdots &\ddots  & \vdots \\
    \phi^{[Q]}_1\mtbP_{Q} & \cdots & \phi^{[Q]}_D\mtbP_{Q}
    \end{bmatrix} =  - \mtbP \otimes \bphi, 
\end{equation}
where $\mtbP = \diag(\mtbP_{1}, \ldots, \mtbP_{Q})$ denotes a diagonal matrix composed of $Q$ projection matrices, $\bphi = \bigl(\phi^{[q]}_d\bigr)_{Q\times D}$ represents the constant matrix with entries $\phi^{[q]}_d$, and the operation $\otimes$ represents the block multiplication between two matrices. 

As the previous continuous situation, we can also study the invertibility of $\nabla\mtbK(\hat{\mtbf})$. 
\begin{proposition}\label{prop:dd_0_deriva_invert}
Assume that $Q=D$, and $\bphi$, $\mtbP_q$ for $q=1, \ldots, Q$ are invertible. Then $\nabla\mtbK(\hat{\mtbf})$ is invertible.  
\end{proposition}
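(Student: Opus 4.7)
The plan is to mimic the structure of the proof of \cref{prop:0_deriva_invert} from the continuous setting, by factoring the Jacobian matrix $\nabla\mtbK(\hat{\mtbf})$ in \cref{eq:dd_nabla_K0} as a product of two invertible square matrices.

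First, I would introduce an auxiliary block matrix $\boldsymbol{\Phi}$ whose $(q,d)$-th block is the scalar matrix $\phi^{[q]}_d \mathbf{I}_I$, where $\mathbf{I}_I$ denotes the $I \times I$ identity. A direct block-matrix computation then yields the factorization
\begin{equation*}
\nabla\mtbK(\hat{\mtbf}) = -\mtbP \, \boldsymbol{\Phi},
\end{equation*}
because, by the block-diagonal structure of $\mtbP$, the $(q,d)$-th block of $\mtbP \, \boldsymbol{\Phi}$ evaluates to $\mtbP_q \cdot \phi^{[q]}_d \mathbf{I}_I = \phi^{[q]}_d \mtbP_q$, matching the explicit form in \cref{eq:dd_nabla_K0}.

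Second, I would argue that each factor is an invertible square matrix. Since $\mtbP_q$ is invertible (hence $I \times I$) for every $q$, the block-diagonal matrix $\mtbP$ is invertible with $\mtbP^{-1} = \diag(\mtbP_1^{-1}, \ldots, \mtbP_Q^{-1})$. For $\boldsymbol{\Phi}$, the assumption $Q = D$ makes $\bphi$, and therefore $\boldsymbol{\Phi}$, a square matrix of size $QI \times QI$; the invertibility of $\bphi$ then transfers to $\boldsymbol{\Phi}$, with the block matrix whose $(q,d)$-th block is $(\bphi^{-1})_{qd} \mathbf{I}_I$ serving as its inverse (as a direct block multiplication verifies, yielding $\delta_{qd} \mathbf{I}_I$ in each block).

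Finally, the product of two invertible square matrices is invertible, so $\nabla\mtbK(\hat{\mtbf}) = -\mtbP \, \boldsymbol{\Phi}$ is invertible, and its inverse may be written explicitly as $-\boldsymbol{\Phi}^{-1} \mtbP^{-1}$. The only potential stumbling block I foresee is purely notational: keeping the block sizes and the convention for the ad-hoc ``block multiplication'' $\otimes$ used in \cref{eq:dd_nabla_K0} consistent throughout the factorization. Once one observes that $Q = D$ together with the invertibility of each $\mtbP_q$ forces all blocks in $\boldsymbol{\Phi}$ to share matching dimensions, the argument reduces to elementary block linear algebra.
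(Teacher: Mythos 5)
Your proof is correct and follows essentially the same route as the paper: both rest on the factorization of $\nabla\mtbK(\hat{\mtbf})=-\mtbP\otimes\bphi$ from \cref{eq:dd_nabla_K0} and the invertibility of the two factors, the paper simply asserting the inverse $-\bphi^{-1}\otimes\mtbP^{-1}$ while you verify it by writing $\mtbP\otimes\bphi=\mtbP\,\boldsymbol{\Phi}$ with $\boldsymbol{\Phi}$ the Kronecker-type lift of $\bphi$. Your explicit inverse $-\boldsymbol{\Phi}^{-1}\mtbP^{-1}$ agrees block-by-block with the paper's $-\bphi^{-1}\otimes\mtbP^{-1}$, so the argument is a fully spelled-out version of the paper's one-line proof.
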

\begin{proof}
Since $Q=D$, using the invertibility of matrix, from \cref{eq:dd_nabla_K0}, it is easy to obtain that the inverse of $\nabla\mtbK(\hat{\mtbf})$ is  
\[
\bigl(\nabla\mtbK(\hat{\mtbf})\bigr)^{-1} = - \bphi^{-1} \otimes \mtbP^{-1}, 
\]
where $\mtbP^{-1} = \diag(\mtbP_{1}^{-1}, \ldots, \mtbP_{Q}^{-1})$. 
\end{proof} 

Again, by leveraging the simplified Newton method \cite{Deuflhard_11,Iter_NonEq_book}, the proposed iterative scheme for solving the discrete problem in \cref{eq:discrete_imaging_prob} (or \cref{eq:dd_imaging_prob}) can be formulated as 
\begin{equation}\label{eq:dd_iterative_scheme}
\mtbf^{k+1} = \mtbf^{k} - \bphi^{-1}\otimes \mtbP^{-1}\bigl(\mtbg - \mtbK(\mtbf^{k})\bigr). 
\end{equation}
Note that the standard simplified Newton method utilizes the Jacobian matrix at the initial point $\mtbf^{0}$. 

More clearly, based on \cref{eq:dd_iterative_scheme}, we present the proposed algorithm AFIRE for solving \cref{eq:discrete_imaging_prob} (or \cref{eq:dd_imaging_prob}) in \cref{alg:solve_dd_prob}, which may also be seen as a specific discretized counterpart of \cref{alg:solve_cc_prob}.
\begin{algorithm}[htbp]
    \caption{The proposed algorithm for solving the discrete problem in \cref{eq:discrete_imaging_prob} (or \cref{eq:dd_imaging_prob}).}
    \label{alg:solve_dd_prob}
    \begin{algorithmic}[1]
    \State \emph{Initialize}: Given $Q = D$, the measured data $\{\mtbg^{[q]}\}_{q=1}^Q$, the discrete energy spectra $\{\mtbs^{[q]}\}_{q=1}^Q$, the MAC $\{\mtbb_d\}_{d=1}^D$, and a certain real constant set $\{C_d^{[q]}\}_{d,q=1}^{D,Q}$. Initialized the basis images $\{\mtbf_{d}^0\}_{d=1}^D$. Let $k \gets 0$. 
    \State Compute the constant matrix $\bphi = \bigl(\phi^{[q]}_d\bigr)_{Q\times D}$ from \cref{eq:dd_phi} and the inverse $\bphi^{-1}$. 
    \State \emph{Loop}:
    \State \quad Compute the residual 
\begin{equation*}
    \begin{bmatrix}
            \bLambda_1^k \\
            \vdots \\
            \bLambda_Q^k
        \end{bmatrix}  \longleftarrow \begin{bmatrix}
            \mtbg^{[1]} - \mtbK^{[1]}(\mtbf^{k}) \\
            \vdots \\
            \mtbg^{[Q]} - \mtbK^{[Q]}(\mtbf^{k})
        \end{bmatrix}. 
\end{equation*}
    \State \quad Update $\mtbf^{k+1}$ by 
    \begin{equation}\label{eq:f_upd_dd}
        \begin{bmatrix}
            \mtbf_{1}^{k+1} \\
            \vdots \\
            \mtbf_{D}^{k+1}
        \end{bmatrix} \longleftarrow \begin{bmatrix}
            \mtbf_{1}^{k} \\
            \vdots \\
            \mtbf_{D}^{k}
        \end{bmatrix}
            - \bphi^{-1} \otimes 
            \begin{bmatrix}
            \mtbP_{1}^{-1}\bLambda_1^k \\
            \vdots \\
            \mtbP_{Q}^{-1}\bLambda_Q^k
            \end{bmatrix}. 
    \end{equation}
    \State \quad If some given termination condition is satisfied, then \textbf{output} $\mtbf^{k+1}$; \newline \indent \hspace{-3.5mm}otherwise, let $k \gets k+1$, \textbf{goto} \emph{Loop}.
    \end{algorithmic}
\end{algorithm}

 \begin{remark}\label{rem:dd_special_point}
     In \cref{alg:solve_dd_prob}, one can particularly choose the $\{C_d^{[q]}=0\}_{d,q=1}^{D,Q}$ with $\hat{\mtbf} = \bzero$ to fulfill the condition in \cref{eq:dd_special_points}. Then, the corresponding formula of \cref{eq:dd_phi} becomes 
    \[
    \phi^{[q]}_d = \sum_{m=1}^M b_{dm}s^{[q]}_m. 
    \] 
\end{remark}

 \begin{remark}\label{rem:dd_P_plus}
As a matter of fact, the inverse of the matrix $\mtbP$ in \cref{eq:dd_iterative_scheme} is difficult to calculate accurately. That is because all of its diagonal elements $\mtbP_{1},\ldots,\mtbP_{Q}$ may not even be invertible square matrices, even if they are, the computational cost of calculating the inverse of a large-scale matrix is very high. Therefore, we can only obtain an approximation for the inverse of $\mtbP$, and the actual iterative scheme used for \cref{eq:dd_iterative_scheme} should be   
\begin{equation}\label{eq:dd_scheme_approx}
\mtbf^{k+1} = \mtbf^{k} - \bphi^{-1}\otimes \mtbP^{+}\bigl(\mtbg - \mtbK(\mtbf^{k})\bigr), 
\end{equation}
where $\mtbP^{+} = \diag(\mtbP_{1}^{+}, \ldots, \mtbP_{Q}^{+})$ is the approximation for the inverse of $\mtbP$. Here $\mtbP_{q}^{+}$ denotes the approximation for the inverse of $\mtbP_{q}$. The update of \cref{eq:f_upd_dd} should be replaced with 
    \begin{equation}\label{eq:f_upd_dd_approx}
        \begin{bmatrix}
            \mtbf_{1}^{k+1} \\
            \vdots \\
            \mtbf_{D}^{k+1}
        \end{bmatrix} \longleftarrow \begin{bmatrix}
            \mtbf_{1}^{k} \\
            \vdots \\
            \mtbf_{D}^{k}
        \end{bmatrix}
            - \bphi^{-1} \otimes 
            \begin{bmatrix}
            \mtbP_{1}^{+}\bLambda_1^k \\
            \vdots \\
            \mtbP_{Q}^{+}\bLambda_Q^k
            \end{bmatrix}.
    \end{equation} 
\end{remark}

\begin{remark}\label{rem:inverse_comm} 
In practice, all the $\mtbP_{q}^{+}$, the approximation for the inverse of $\mtbP_{q}$, $q = 1, \dots, Q$, are independent of each other, which can be calculated in parallel by the iterative algorithms, such as algebraic reconstruction technique, conjugate gradient, and the other methods derived from optimization models. Moreover, the full-scan case in this study makes the method of FBP be applicable and efficient. 
\end{remark}

\section{Convergence analysis}\label{sec:convergence_analysis}

In this section, we study the convergence of the proposed algorithms that given in \cref{alg:solve_cc_prob,alg:solve_dd_prob}. Assuming that both the solutions exist to the continuous problem \cref{eq:cc_imaging_prob} and the discrete one \cref{eq:dd_imaging_prob}, and we refer to $\bdf^*$ and $\mtbf^*$ as the solutions, respectively.

Let $\|\cdot\|_F$ and $\sigma_{\min}(\cdot)$ be the Frobenius norm and the minimum singular value of a matrix, respectively. If $\sigma_{\min}(\cdot)$ is nonvanishing, define the scaled condition number by $\kappa_F(\cdot) := \|\cdot\|_F/\sigma_{\min}(\cdot)$. Denote $\|\cdot\|_1$ and $\|\cdot\|_2$ as the $1$-norm and Euclidean norm of a vector or the spectral norm of a matrix, respectively. 

\subsection{Convergence analysis for the continuous setting}

To proceed, we give the following lemma. 

\begin{lemma}\label{lem:continuous}
Suppose that the conditions in \cref{prop:0_deriva_invert} hold, and $\mathbb{T}^{n}_q = \mathbb{T}^{n}$ for $q=1,\ldots,Q$. Assume that there exists a constant $\eta > 0$ such that, for any nonnegative normalized function $\rho$ and any indices $q, d$, 
    \begin{equation*}
       \Bigl{|} \int_{0}^{E_{\max}} \left(\rho(E) - \Phi^{[q]}(\hat{\bdf}; E) \right) b_d(E)\rmd E \Bigr{|}\le \eta \Bigl{|}\int_{0}^{E_{\max}} \Phi^{[q]}(\hat{\bdf}; E) b_d(E)\rmd E\Bigr{|}.
    \end{equation*}
    Then, for any $\bdf, \bdh\in\mathbfcal{F}^D$, 
    \begin{equation}\label{eq:op_RGDC_0}
    \|\Diff \bdK(\bdf,\bdh)-\Diff \bdK(\hat{\bdf},\bdh)\|_{\mathbfcal{G}^Q}\le \eta \kappa_F(\bvarphi) \|\Diff \bdK(\hat{\bdf},\bdh) \|_{\mathbfcal{G}^Q}. 
    \end{equation} 
\end{lemma}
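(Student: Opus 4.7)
The plan is to reduce the $\mathbfcal{G}^Q$-norm estimate to a pointwise matrix inequality on $\mathbb{T}^{n}$ and then integrate. The condition $\mathbb{T}^{n}_q=\mathbb{T}^{n}$ for every $q$ collapses the X-ray transforms into a common operator $\Xray_1=\cdots=\Xray_Q=:\Xray$, so at every $(\bdomega,\bx)\in\mathbb{T}^{n}$ both $\Diff\bdK(\hat{\bdf},\bdh)(\bdomega,\bx)$ and the difference $\bigl(\Diff\bdK(\bdf,\bdh)-\Diff\bdK(\hat{\bdf},\bdh)\bigr)(\bdomega,\bx)$ are genuine $Q$-vectors produced from the common $D$-vector $\bdu(\bdomega,\bx):=[\Xray h_1,\ldots,\Xray h_D]^{\tra}(\bdomega,\bx)$ by multiplication with $Q\times D$ matrices, namely $-\bvarphi$ in one case and an ``error matrix'' $-\bvarLambda(\bdomega,\bx)$ in the other.

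First I would subtract the expressions from \cref{eq:Frech_deriva} for $\bdf$ and $\hat{\bdf}$ and pull the $E$-independent factor $\Xray h_d$ out of the integral, identifying the $(q,d)$-entry of $\bvarLambda(\bdomega,\bx)$ as $\int_{0}^{E_{\max}}\!\bigl(\Phi^{[q]}(\bdf;E)-\Phi^{[q]}(\hat{\bdf};E)\bigr)b_d(E)\,\rmd E$. For each fixed $(\bdomega,\bx)$ the function $E\mapsto\Phi^{[q]}(\bdf;E)$ is nonnegative and normalized (the denominator in \cref{eq:Phi_f} is precisely the normalization), hence a legitimate $\rho$ in the assumption. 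Applying the hypothesis entry by entry and recalling \cref{eq:phi_q_d} yields the pointwise scalar bound $|\bvarLambda(\bdomega,\bx)_{q,d}|\le\eta\,|\varphi^{[q]}_d|$, and consequently the Frobenius bound $\|\bvarLambda(\bdomega,\bx)\|_F\le\eta\,\|\bvarphi\|_F$.

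Next I would chain three elementary inequalities at each $(\bdomega,\bx)$,
\begin{equation*}
\|\bvarLambda(\bdomega,\bx)\,\bdu(\bdomega,\bx)\|_2\le\|\bvarLambda(\bdomega,\bx)\|_F\,\|\bdu(\bdomega,\bx)\|_2\le\eta\,\|\bvarphi\|_F\,\|\bdu(\bdomega,\bx)\|_2\le\eta\,\kappa_F(\bvarphi)\,\|\bvarphi\,\bdu(\bdomega,\bx)\|_2,
\end{equation*}
where the last step uses $\sigma_{\min}(\bvarphi)\|\bdu\|_2\le\|\bvarphi\,\bdu\|_2$, valid because \cref{prop:0_deriva_invert} ensures $\bvarphi$ is invertible and therefore $\sigma_{\min}(\bvarphi)>0$. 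Squaring this pointwise inequality, integrating over $(\bdomega,\bx)\in\mathbb{T}^{n}$, and summing the $Q$ squared components reassembles the $\mathbfcal{G}^Q$-norms on both sides and delivers \cref{eq:op_RGDC_0}.

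The main thing to be careful about is the pointwise interpretation of the hypothesis: the test density $\rho$ there is allowed to depend on the geometric parameter $(\bdomega,\bx)$ (a different $\rho$ for every ray), so the assumption effectively provides a uniform-in-$(\bdomega,\bx)$ control over the Frobenius norm of $\bvarLambda$. Once this pointwise reading is granted, the remainder is routine linear algebra combined with Tonelli/Fubini applied to a nonnegative integrand.
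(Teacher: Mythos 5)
Your proposal is correct and follows essentially the same route as the paper's own proof: you form the pointwise error matrix (the paper's $\Psi(\bdf)(\bdomega,\bx)-\bvarphi$), apply the hypothesis entry by entry with $\rho=\Phi^{[q]}(\bdf;\cdot)(\bdomega,\bx)$ to get the Frobenius bound $\eta\|\bvarphi\|_F$, chain through $\|\cdot\|_2\le\|\cdot\|_F$ and $\sigma_{\min}(\bvarphi)\|\bdz\|_2\le\|\bvarphi\bdz\|_2$, and integrate over $\mathbb{T}^n$. The only difference is expository (you make explicit that $\mathbb{T}^n_q=\mathbb{T}^n$ collapses the transforms to a common $\Xray$, which the paper leaves implicit), so no further comment is needed.
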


\begin{proof}
    See \cref{proof:cc_lemma} for the proof.
\end{proof}

Using the above result, we can establish the global convergence of \cref{alg:solve_cc_prob} and the uniqueness of the solution.
\begin{theorem}\label{thm:conver_cc_alg}
    Suppose that the conditions in \cref{lem:continuous} hold and $\eta \kappa_F(\bvarphi) < 1$, and the iterative scheme \cref{eq:iterative_scheme} is well-defined. Then, for any initial point $\bdf^0\in\mathbfcal{F}^D$, the iteration sequence $\{\bdf^k\}$ generated by \cref{alg:solve_cc_prob} converges to the unique solution of \cref{eq:cc_imaging_prob}.
\end{theorem}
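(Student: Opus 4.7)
The plan is to view \cref{alg:solve_cc_prob} as the simplified Newton iteration with the derivative frozen at the auxiliary point $\hat{\bdf}$, and then promote \cref{lem:continuous} into an affine covariant contraction on the error. I would abbreviate $A:=\Diff\bdK(\hat{\bdf},\Cdot)$, so that by \cref{prop:0_deriva_invert} $A$ is a bounded linear bijection $\mathbfcal{F}^D\to\tilde{\mathbfcal{G}}^Q$ with inverse $-\bvarphi^{-1}\bXray^{-1}$; the bounded inverse theorem applied to the Banach spaces $\mathbfcal{F}^D,\mathbfcal{G}^Q$ (which I would need to verify using the closed-support definition of $H_0^{-1/2}(\domain)$) then guarantees that $A^{-1}$ is continuous. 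The iteration \cref{eq:iterative_scheme} is exactly $\bdf^{k+1}=\bdf^{k}-A^{-1}(\bdK(\bdf^{k})-\bdg)$; using $\bdg=\bdK(\bdf^*)$ and applying $A$ to the error yields
\[
A(\bdf^{k+1}-\bdf^*)=\Diff\bdK(\hat{\bdf},\bdf^{k}-\bdf^*)-\bigl(\bdK(\bdf^{k})-\bdK(\bdf^*)\bigr).
\]

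Next I would invoke the fundamental theorem of calculus for Fréchet-differentiable maps, whose applicability is furnished by \cref{prop:Frechet_derivative}, to rewrite $\bdK(\bdf^{k})-\bdK(\bdf^*)=\int_0^1\Diff\bdK\bigl(\bdf^*+t(\bdf^{k}-\bdf^*),\bdf^{k}-\bdf^*\bigr)\,\rmd t$. Substituting this into the error identity recasts $A(\bdf^{k+1}-\bdf^*)$ as an integral over $t\in[0,1]$ of $\Diff\bdK(\hat{\bdf},\bdf^{k}-\bdf^*)-\Diff\bdK\bigl(\bdf^*+t(\bdf^{k}-\bdf^*),\bdf^{k}-\bdf^*\bigr)$. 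Taking $\|\Cdot\|_{\mathbfcal{G}^Q}$, pulling it inside the integral, and applying \cref{lem:continuous} for each $t$ (the estimate \cref{eq:op_RGDC_0} is uniform in the first argument) gives the contraction
\[
\|A(\bdf^{k+1}-\bdf^*)\|_{\mathbfcal{G}^Q}\le \eta\kappa_F(\bvarphi)\,\|A(\bdf^{k}-\bdf^*)\|_{\mathbfcal{G}^Q}.
\]
Since $\eta\kappa_F(\bvarphi)<1$, iterating this estimate forces $\|A(\bdf^{k}-\bdf^*)\|_{\mathbfcal{G}^Q}\to 0$ geometrically, and continuity of $A^{-1}$ transports this to $\|\bdf^{k}-\bdf^*\|_{\mathbfcal{F}^D}\to 0$ from any starting point $\bdf^0$.

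For uniqueness, I would replicate the same manipulation on two candidate solutions $\bdf^*_1,\bdf^*_2$: from $\bdK(\bdf^*_1)-\bdK(\bdf^*_2)=\bzero$ and the same integral identity, the quantity $A(\bdf^*_1-\bdf^*_2)$ equals an integral whose $\mathbfcal{G}^Q$-norm is bounded via \cref{lem:continuous} by $\eta\kappa_F(\bvarphi)\|A(\bdf^*_1-\bdf^*_2)\|_{\mathbfcal{G}^Q}$; the self-referential inequality $\|A(\bdf^*_1-\bdf^*_2)\|_{\mathbfcal{G}^Q}\le \eta\kappa_F(\bvarphi)\|A(\bdf^*_1-\bdf^*_2)\|_{\mathbfcal{G}^Q}$ with $\eta\kappa_F(\bvarphi)<1$ forces $A(\bdf^*_1-\bdf^*_2)=\bzero$, and invertibility of $A$ from \cref{prop:0_deriva_invert} then gives $\bdf^*_1=\bdf^*_2$. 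The main obstacle I anticipate is purely technical: a rigorous Bochner-style formulation of the operator-valued integral $\int_0^1 \Diff\bdK(\bdf^*+t(\bdf^{k}-\bdf^*),\Cdot)\,\rmd t$ in $\mathbfcal{G}^Q$, specifically justifying continuity in $t$ and the exchange of integral with norm. This should be routine once one records that Fréchet differentiability of $\bdK$ (\cref{prop:Frechet_derivative}) implies norm-continuity of $t\mapsto\Diff\bdK(\bdf^*+t(\bdf^{k}-\bdf^*),\bdf^{k}-\bdf^*)$, after which the uniform bound in \cref{lem:continuous} legitimizes the estimate.
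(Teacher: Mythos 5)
Your proposal is correct and follows essentially the same route as the paper's proof: the same error recursion $\Diff\bdK(\hat{\bdf},\bdf^{k+1}-\bdf^*)=\Diff\bdK(\hat{\bdf},\bdf^{k}-\bdf^*)-(\bdK(\bdf^{k})-\bdK(\bdf^*))$, the same mean-value integral representation, the same application of \cref{lem:continuous} to obtain the contraction factor $\eta\kappa_F(\bvarphi)$, and the same self-referential inequality for uniqueness. Your explicit attention to the boundedness of $(\Diff\bdK(\hat{\bdf},\Cdot))^{-1}$ and to the justification of the operator-valued integral is a welcome refinement of details the paper leaves implicit, but it does not change the argument.
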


\begin{proof}
Using \cref{eq:inverse_map}, the iterative scheme \cref{eq:iterative_scheme} can be rewritten as 
\begin{align}\label{eq:equal_cc_scheme}
    \Diff \bdK(\hat{\bdf}, \bdf^{k+1}-\bdf^k) = \bdg - \bdK(\bdf^k).
\end{align} 
The following estimate is obtained 
\begin{align*}
    \|\Diff \bdK(\hat{\bdf}, &\bdf^{k+1} - \bdf^*)\|_{\mathbfcal{G}^Q} \\
    &= \|\Diff \bdK(\hat{\bdf}, \bdf^{k+1}-\bdf^k) + \Diff \bdK(\hat{\bdf}, \bdf^{k}-\bdf^*)\|_{\mathbfcal{G}^Q} \\ 
    &\hspace{-1.4mm}\overset{\cref{eq:equal_cc_scheme}}{=} \|\bdK(\bdf^k) - \bdK(\bdf^*) - \Diff \bdK(\hat{\bdf}, \bdf^{k} - \bdf^*)\|_{\mathbfcal{G}^Q} \\ &=\Big\|\int_0^1 \Diff \bdK(t \bdf^k + (1-t)\bdf^*, \bdf^{k} - \bdf^*) -\Diff \bdK(\hat{\bdf},\bdf^{k}-\bdf^*)\rmd t\Big\|_{\mathbfcal{G}^Q} \\ 
    &\le \int_0^1 \|\Diff \bdK(t \bdf^k + (1-t)\bdf^*, \bdf^{k}-\bdf^*)-\Diff \bdK(\hat{\bdf}, \bdf^{k}-\bdf^*)\|_{\mathbfcal{G}^Q} \rmd t \\ 
    &\hspace{-1.4mm}\overset{\cref{eq:op_RGDC_0}}{\le} \eta \kappa_F(\bvarphi)\|\Diff \bdK(\hat{\bdf},\bdf^{k}-\bdf^*)\|_{\mathbfcal{G}^Q}.
\end{align*}
With $\eta \kappa_F(\bvarphi) < 1$, we have 
\begin{align*}
    \big\|\Diff \bdK(\hat{\bdf},\bdf^{k}-\bdf^*)\big\|_{\mathbfcal{G}^Q}\longrightarrow 0 \quad\text{as}~ k\longrightarrow \infty.
\end{align*}
Based on \cref{prop:0_deriva_invert}, we know that $\Diff \bdK(\hat{\bdf},\cdot)$ is invertible. Then, we obtain
\begin{equation*}
    \|\bdf^{k}-\bdf^*\|_{\mathbfcal{F}^D}\longrightarrow 0 \quad\text{as}~ k\longrightarrow \infty.
\end{equation*}
This completes the proof of the convergence. 

Finally, we prove the uniqueness of the solution. If there exists another solution $\bar{\bdf}^*$ satisfies \cref{eq:cc_imaging_prob}, according to the above estimate, we have 
\begin{equation*}
    \|\bdK(\bar{\bdf}^*) - \bdK(\bdf^*) - \Diff \bdK(\hat{\bdf},\bar{\bdf}^*-\bdf^*)\|_{\mathbfcal{G}^Q} \le  \eta \kappa_F(\bvarphi)\|\Diff \bdK(\hat{\bdf},\bar{\bdf}^*-\bdf^*)\|_{\mathbfcal{G}^Q}.
\end{equation*}
Thus, $\|\Diff \bdK(\hat{\bdf},\bar{\bdf}^*-\bdf^*)\|_{\mathbfcal{G}^Q}=0$, which implies that $\bar{\bdf}^*=\bdf^*$.
\end{proof}

\subsection{Convergence analysis for the discrete setting}

For the discrete case, the operator in \cref{eq:dd_imaging_prob} defined on Euclidean space is obviously continuously differentiable. Similarly, we can establish the following result.
\begin{lemma}\label{lem:discrete}
    Assume that $\bphi$ is invertible and $\mtbP$ has full column rank, furthermore, for any $\boldsymbol{\rho} \in \Real^M_{+}$ in the unit simplex, there exists a constant $\tilde{\eta} > 0$ such that for any indices $q$ and $d$,
    \begin{equation*}
        \big{|}  \mtbb_d^{\tra}\hat{\mtbw}^{[q]}  - \mtbb_d^{\tra} \boldsymbol{\rho} \big{|} < \tilde{\eta}  \big{|}  \mtbb_d^{\tra} \hat{\mtbw}^{[q]}\big{|}. 
    \end{equation*}
    Then, for any $\mtbf, \mtbh$, 
        \begin{equation*}
        \|\nabla \mtbK(\hat{\mtbf})\mtbh - \nabla \mtbK(\mtbf)\mtbh\|_2\le \tilde{\eta}\kappa_F(\mtbP\otimes\bphi) \|\nabla \mtbK(\hat{\mtbf})\mtbh\|_2.
    \end{equation*}
\end{lemma}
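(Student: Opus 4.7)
The plan is to mirror the continuous argument in three steps: first identify the structural form of the Jacobian difference $\nabla \mtbK(\mtbf) - \nabla \mtbK(\hat{\mtbf})$ by comparing \cref{eq:gradient_K_discr} with the special-point identity $\nabla \mtbK(\hat{\mtbf}) = -\mtbP\otimes\bphi$ from \cref{eq:dd_nabla_K0}; next use the simplex hypothesis to control the scalar discrepancies that arise; and finally convert a Frobenius-norm bound into the desired $\kappa_F(\mtbP\otimes\bphi)$ estimate via a singular-value argument.

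First I would note that the row of $\nabla \mtbK(\mtbf)$ indexed by $(j,q)$ and block $d$ is $-\bigl(\mtbb_d^{\tra}\mtbw_j^{[q]}(\mtbf)\bigr)(\mtbp_j^{[q]})^{\tra}$, whereas the same row of $\nabla \mtbK(\hat{\mtbf})$ is $-\phi^{[q]}_d(\mtbp_j^{[q]})^{\tra}$, so the difference has the rank-one form $-\delta_{j,d}^{[q]}(\mtbp_j^{[q]})^{\tra}$ with $\delta_{j,d}^{[q]} := \mtbb_d^{\tra}\mtbw_j^{[q]}(\mtbf) - \phi^{[q]}_d$. Because $\mtbw_j^{[q]}(\mtbf)$ defined in \cref{eq:omega_discr} is a normalized nonnegative vector and therefore lies in the unit simplex of $\Real^M_+$, applying the hypothesis with $\boldsymbol{\rho} = \mtbw_j^{[q]}(\mtbf)$ yields $|\delta_{j,d}^{[q]}| < \tilde{\eta}|\phi^{[q]}_d|$ uniformly in $j,q,d$. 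Summing the squared Frobenius norms row by row then gives
\[
\|\nabla \mtbK(\mtbf) - \nabla \mtbK(\hat{\mtbf})\|_F^2 = \sum_{j,q,d}(\delta_{j,d}^{[q]})^2\|\mtbp_j^{[q]}\|_2^2 \le \tilde{\eta}^2\sum_{q,d}(\phi^{[q]}_d)^2\|\mtbP_q\|_F^2 = \tilde{\eta}^2\|\mtbP\otimes\bphi\|_F^2,
\]
the last equality being a direct consequence of the block pattern of $\mtbP\otimes\bphi$ recorded in \cref{eq:dd_nabla_K0}.

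Finally, I would combine the elementary inequality $\|A\mtbh\|_2 \le \|A\|_F\|\mtbh\|_2$ applied to $A = \nabla\mtbK(\mtbf) - \nabla\mtbK(\hat{\mtbf})$ with the lower bound $\|\mtbh\|_2 \le \|(\mtbP\otimes\bphi)\mtbh\|_2/\sigma_{\min}(\mtbP\otimes\bphi) = \|\nabla\mtbK(\hat{\mtbf})\mtbh\|_2/\sigma_{\min}(\mtbP\otimes\bphi)$, which chain yields exactly the claim. To make the second inequality meaningful I must verify that $\mtbP\otimes\bphi$ has full column rank: from $(\mtbP\otimes\bphi)\mtbh = \bzero$, injectivity of each $\mtbP_q$ (a consequence of $\mtbP$ having full column rank) forces $\sum_d \phi^{[q]}_d \mtbh_d = \bzero$ for every $q$, and invertibility of $\bphi$ then forces $\mtbh_d = \bzero$ for every $d$. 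Apart from careful bookkeeping of the block indexing, there is no real analytical obstacle; the discrete argument is actually cleaner than the continuous one since everything is finite-dimensional and the linear approximant $\mtbP\otimes\bphi$ already appears explicitly in \cref{eq:dd_nabla_K0}.
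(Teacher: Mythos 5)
Your proposal is correct and follows essentially the same route as the paper's proof: apply the simplex hypothesis with $\boldsymbol{\rho}=\mtbw_j^{[q]}(\mtbf)$ to bound $\|\nabla\mtbK(\mtbf)-\nabla\mtbK(\hat{\mtbf})\|_F$ by $\tilde{\eta}\|\mtbP\otimes\bphi\|_F$ row by row, then chain $\|A\mtbh\|_2\le\|A\|_F\|\mtbh\|_2$ with $\|\mtbh\|_2\le\sigma_{\min}^{-1}(\mtbP\otimes\bphi)\|\nabla\mtbK(\hat{\mtbf})\mtbh\|_2$. Your explicit verification that $\mtbP\otimes\bphi$ has full column rank (so that $\sigma_{\min}>0$) is a small but welcome addition that the paper leaves implicit.
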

\begin{proof}
    See \cref{proof:dd_lemma} for the proof.
\end{proof}

Considering the iterative scheme \cref{eq:dd_scheme_approx}, and using \cref{eq:dd_nabla_K0}, we denote a residual at each iteration by 
\begin{equation}\label{eq:dd_equal_scheme_error}
     \bddelta^k = \nabla \mtbK(\hat{\mtbf})(\mtbf^{k+1}-\mtbf^k) - \bigl(\mtbK(\mtbf^{k}) - \mtbg\bigr). 
\end{equation} 
For brevity, let 
\[
\gamma = \tilde{\eta}\kappa_F(\mtbP\otimes\bphi).
\] 
Under proper conditions, we have the following global convergence of the iterative scheme \cref{eq:dd_scheme_approx}.
\begin{theorem}\label{thm:dd_converge}
Assume that the conditions in \cref{lem:discrete} hold and $\gamma < 1$. Then, the solution to \cref{eq:dd_imaging_prob} is unique. 

Moreover, suppose that for any $k$, the residual in \cref{eq:dd_equal_scheme_error} satisfies 
\begin{equation}\label{eq:delta_k}
    \|\bddelta^k\|_2 \le  \zeta\|\mtbK(\mtbf^{k})-\mtbg\|_2,
\end{equation}
where the constant $\zeta < (1 - \gamma)/(1 + \gamma)$. Then, the sequence $\{\mtbf^{k}\}$ generated by the iterative scheme \cref{eq:dd_scheme_approx} converges to the unique solution of \cref{eq:dd_imaging_prob}.
\end{theorem}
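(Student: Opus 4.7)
My plan is to closely mirror the argument for \cref{thm:conver_cc_alg}, with the only new ingredient being the propagation of the approximation error $\bddelta^k$ coming from using $\mtbP^+$ in place of $\mtbP^{-1}$. For \emph{uniqueness}, suppose $\bar{\mtbf}^*$ is another solution. Writing $\mathbf{0} = \mtbK(\bar{\mtbf}^*) - \mtbK(\mtbf^*) = \int_0^1 \nabla \mtbK(\mtbf^* + t(\bar{\mtbf}^* - \mtbf^*))(\bar{\mtbf}^* - \mtbf^*)\,\rmd t$ and subtracting $\nabla \mtbK(\hat{\mtbf})(\bar{\mtbf}^* - \mtbf^*)$ from both sides puts us in position to apply \cref{lem:discrete} pointwise inside the integral, yielding $\|\nabla \mtbK(\hat{\mtbf})(\bar{\mtbf}^* - \mtbf^*)\|_2 \le \gamma \|\nabla \mtbK(\hat{\mtbf})(\bar{\mtbf}^* - \mtbf^*)\|_2$. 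With $\gamma < 1$ this forces $\nabla \mtbK(\hat{\mtbf})(\bar{\mtbf}^* - \mtbf^*) = \mathbf{0}$. The assumptions on $\mtbP$ and $\bphi$ make $\nabla \mtbK(\hat{\mtbf}) = -\mtbP \otimes \bphi$ injective---block-by-block, full-column-rank of each $\mtbP_q$ forces $\sum_d \phi_d^{[q]}\mtbh_d = \mathbf{0}$ for every $q$, and then invertibility of $\bphi$ forces $\mtbh_d = \mathbf{0}$---so $\bar{\mtbf}^* = \mtbf^*$.

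For \emph{convergence}, I would track $\nabla \mtbK(\hat{\mtbf})(\mtbf^{k+1} - \mtbf^*)$. Decomposing it as $\nabla \mtbK(\hat{\mtbf})(\mtbf^{k+1} - \mtbf^k) + \nabla \mtbK(\hat{\mtbf})(\mtbf^k - \mtbf^*)$, using \cref{eq:dd_equal_scheme_error} to rewrite $\nabla \mtbK(\hat{\mtbf})(\mtbf^{k+1} - \mtbf^k) = \mtbK(\mtbf^k) - \mtbg + \bddelta^k = \mtbK(\mtbf^k) - \mtbK(\mtbf^*) + \bddelta^k$, and finally expanding $\mtbK(\mtbf^k) - \mtbK(\mtbf^*)$ via the integral mean-value identity, the right-hand side reassembles as $\int_0^1 [\nabla \mtbK(\hat{\mtbf}) - \nabla \mtbK(\mtbf^* + t(\mtbf^k - \mtbf^*))](\mtbf^k - \mtbf^*)\,\rmd t + \bddelta^k$ (up to a sign that is harmless under the norm). \cref{lem:discrete} applied inside the integral then yields
\[
\|\nabla \mtbK(\hat{\mtbf})(\mtbf^{k+1} - \mtbf^*)\|_2 \le \gamma\,\|\nabla \mtbK(\hat{\mtbf})(\mtbf^k - \mtbf^*)\|_2 + \|\bddelta^k\|_2.
\]

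To close the recursion using the hypothesis \cref{eq:delta_k}, I need to dominate $\|\mtbK(\mtbf^k) - \mtbg\|_2 = \|\mtbK(\mtbf^k) - \mtbK(\mtbf^*)\|_2$ by a multiple of $\|\nabla \mtbK(\hat{\mtbf})(\mtbf^k - \mtbf^*)\|_2$. Adding and subtracting $\nabla \mtbK(\hat{\mtbf})(\mtbf^k - \mtbf^*)$ inside the norm, then invoking \cref{lem:discrete} one more time, gives the factor $(1+\gamma)$. Substituting back produces the one-step estimate
\[
\|\nabla \mtbK(\hat{\mtbf})(\mtbf^{k+1} - \mtbf^*)\|_2 \le [\gamma + \zeta(1+\gamma)]\,\|\nabla \mtbK(\hat{\mtbf})(\mtbf^k - \mtbf^*)\|_2,
\]
which is a strict contraction precisely when $\zeta < (1-\gamma)/(1+\gamma)$---the stated hypothesis is sharp. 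Iterating drives $\|\nabla \mtbK(\hat{\mtbf})(\mtbf^k - \mtbf^*)\|_2 \to 0$, and the injectivity of $\nabla \mtbK(\hat{\mtbf})$ already invoked for uniqueness then promotes this to $\|\mtbf^k - \mtbf^*\|_2 \to 0$.

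The main obstacle I anticipate is just the bookkeeping of the $\bddelta^k$ term: the continuous analysis in \cref{thm:conver_cc_alg} has a clean contraction at rate $\gamma$, but here using $\mtbP^+$ in place of $\mtbP^{-1}$ costs an extra $\zeta(1+\gamma)$ in the contraction factor, and the admissible range for $\zeta$ is exactly what keeps the combined factor below one. A secondary subtlety is that, unlike the continuous case where $\Diff\bdK(\hat{\bdf},\cdot)$ is genuinely invertible, $\mtbP$ need not be square so $\nabla \mtbK(\hat{\mtbf})$ is only guaranteed to be injective; this is nevertheless sufficient to transfer the contraction of the mapped errors $\nabla \mtbK(\hat{\mtbf})(\mtbf^k - \mtbf^*)$ into convergence of $\mtbf^k$ itself.
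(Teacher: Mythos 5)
Your proof is correct and follows essentially the same route as the paper: the mean-value estimate combined with \cref{lem:discrete} yields both uniqueness and the one-step bound $\|\nabla\mtbK(\hat{\mtbf})(\mtbf^{k+1}-\mtbf^*)\|_2 \le \bigl(\gamma+\zeta(1+\gamma)\bigr)\|\nabla\mtbK(\hat{\mtbf})(\mtbf^{k}-\mtbf^*)\|_2$, and the full column rank of $\nabla\mtbK(\hat{\mtbf})$ transfers the contraction to $\mtbf^k$ itself. The only differences are cosmetic: you spell out the block-wise injectivity argument that the paper leaves implicit, and you correctly resolve the sign convention in \cref{eq:dd_equal_scheme_error} (which, as literally written, is off by a sign relative to the claim that $\bddelta^k$ vanishes for the exact scheme \cref{eq:dd_iterative_scheme}).
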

\begin{proof}
    By the mean value theorem and \cref{lem:discrete}, for any $\mtbf$, we have the following estimate 
    \begin{align}\label{eq:dd_mean_value_esti}
        \|\mtbK(\mtbf)- \mtbK(\mtbf^*) - &\nabla \mtbK(\hat{\mtbf})(\mtbf-\mtbf^*)\|_2 \notag \\
        &= \Big\|\int_{0}^1 \nabla\mtbK\bigl(t\mtbf +(1-t)\mtbf^*\bigr)(\mtbf-\mtbf^*)-\nabla \mtbK(\hat{\mtbf})(\mtbf-\mtbf^*)\rmd t \Big\|_2 \notag \\ 
        &\le \int_{0}^1  \|\nabla\mtbK\bigl(t\mtbf +(1-t)\mtbf^*\bigr)(\mtbf-\mtbf^*) - \nabla \mtbK(\hat{\mtbf})(\mtbf-\mtbf^*)\|_2\rmd t\notag  \\ 
        &\le \gamma \|\nabla \mtbK(\hat{\mtbf})(\mtbf-\mtbf^*)\|_2.
    \end{align}
    
        Letting $\bar{\mtbf}^*$ be another solution if it exists, by \cref{eq:dd_mean_value_esti}, we obtain
    \begin{align*}
        \|\nabla \mtbK(\hat{\mtbf})(\bar{\mtbf}^*-\mtbf^*)\|_2
        \le \gamma \|\nabla \mtbK(\hat{\mtbf})(\bar{\mtbf}^*-\mtbf^*)\|_2.
\end{align*}
Using the assumption of $\gamma < 1$, it follows that
\begin{align*}
    \|\nabla \mtbK(\hat{\mtbf})(\bar{\mtbf}^*-\mtbf^*)\|_2=0.
\end{align*}
Since $\nabla \mtbK(\hat{\mtbf})$ has full column rank, we conclude that $\bar{\mtbf}^*=\mtbf^*$. Thus, the solution to \cref{eq:dd_imaging_prob} is unique.
    
    Furthermore, from \cref{eq:dd_mean_value_esti}, the triangle inequality implies that
   \begin{align*}
    \|\mtbK(\mtbf) - \mtbK(\mtbf^*)\|_2 \le (1 + \gamma) \|\nabla \mtbK(\hat{\mtbf})(\mtbf-\mtbf^*)\|_2.
   \end{align*}
   Hence, by \cref{eq:dd_equal_scheme_error,eq:delta_k,eq:dd_mean_value_esti}, and the above inequality in order, we have
   \begin{align}\label{eq:convergence_rate}
    \|\nabla \mtbK(\hat{\mtbf})(\mtbf^{k+1}-\mtbf^*)\|_2 &=  \|\nabla \mtbK(\hat{\mtbf})(\mtbf^{k+1}-\mtbf^k)+ \nabla \mtbK(\hat{\mtbf})(\mtbf^k-\mtbf^*)\|_2 \notag\\ 
    &\le \|\mtbK(\mtbf^k)-\mtbg-\nabla \mtbK(\hat{\mtbf})(\mtbf^k-\mtbf^*)\|_2 + \|\bddelta^k\|_2 \notag\\ 
    &\le \gamma \|\nabla \mtbK(\hat{\mtbf})(\mtbf^k-\mtbf^*)\|_2 + \zeta \|\mtbK(\mtbf^{k})-\mtbg\|_2 \notag\\ 
    &\le \bigl(\gamma + \zeta(1 + \gamma)\bigr)\|\nabla \mtbK(\hat{\mtbf})(\mtbf^{k}-\mtbf^*)\|_2. 
   \end{align}
   It follows from $\gamma + \zeta(1+\gamma)<1$ that the sequence $\{\|\nabla \mtbK(\hat{\mtbf})(\mtbf^k-\mtbf^*)\|_2\}$ converges to zero. Since $\nabla \mtbK(\hat{\mtbf})$ has full column rank, we obtain the convergence of the iteration sequence.
\end{proof}

Note that for the iterative scheme \cref{eq:dd_iterative_scheme}, the residual $\bddelta^k$ in \cref{eq:dd_equal_scheme_error} is always  vanishing. Following \cref{thm:dd_converge}, it is easy to obtain the convergence of the scheme  \cref{eq:dd_iterative_scheme}, namely,  \cref{alg:solve_dd_prob}. We present this result by the following corollary. 
\begin{corollary}\label{cor:dd_converge}
    Assume that the conditions in \cref{lem:discrete} hold, $\mtbP$ is invertible and $\gamma < 1$. Then, the sequence $\{\mtbf^{k}\}$ generated by the iterative scheme  \cref{eq:dd_iterative_scheme} converges to the unique solution of \cref{eq:dd_imaging_prob}.
\end{corollary}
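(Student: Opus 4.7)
The plan is to derive this corollary as the special case of \cref{thm:dd_converge} in which the inner linear system is solved exactly, so that the residual $\bddelta^k$ in \cref{eq:dd_equal_scheme_error} vanishes identically and \cref{thm:dd_converge} applies with $\zeta = 0$.

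First I would observe that, since $\mtbP = \diag(\mtbP_1,\ldots,\mtbP_Q)$ is invertible, every diagonal block $\mtbP_q$ is a square invertible matrix, so one may legitimately take $\mtbP^{+} = \mtbP^{-1}$ in \cref{eq:dd_scheme_approx}; with this choice \cref{eq:dd_scheme_approx} reduces exactly to \cref{eq:dd_iterative_scheme}. By \cref{prop:dd_0_deriva_invert} we have $\bigl(\nabla\mtbK(\hat{\mtbf})\bigr)^{-1} = -\bphi^{-1}\otimes\mtbP^{-1}$, and substituting \cref{eq:dd_iterative_scheme} into \cref{eq:dd_equal_scheme_error} yields
\begin{align*}
\nabla\mtbK(\hat{\mtbf})(\mtbf^{k+1}-\mtbf^{k})
&= -(\mtbP\otimes\bphi)\bigl(-\bphi^{-1}\otimes\mtbP^{-1}\bigr)\bigl(\mtbg-\mtbK(\mtbf^{k})\bigr) \\
&= \mtbK(\mtbf^{k})-\mtbg,
\end{align*}
so that $\bddelta^{k} = \bzero$ for every $k$, and \cref{eq:delta_k} holds trivially with $\zeta = 0$.

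Next I would check the remaining hypotheses of \cref{thm:dd_converge}. The invertibility of $\bphi$ is already part of the hypothesis of \cref{lem:discrete}, and since both $\mtbP$ and $\bphi$ are invertible, the block product $\mtbP\otimes\bphi = -\nabla\mtbK(\hat{\mtbf})$ is invertible and in particular has full column rank. Moreover $\gamma < 1$ implies $(1-\gamma)/(1+\gamma) > 0 = \zeta$, so the smallness condition on $\zeta$ is vacuously satisfied. Applying \cref{thm:dd_converge} then delivers, in one stroke, both uniqueness of the solution to \cref{eq:dd_imaging_prob} and convergence of $\{\mtbf^{k}\}$ generated by \cref{eq:dd_iterative_scheme} to that solution.

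The hard part will essentially be cosmetic: making sure the block-multiplication identity $(\mtbP\otimes\bphi)(\bphi^{-1}\otimes\mtbP^{-1})$ equals the identity with the correct orientation of the factors — a computation already performed inside the proof of \cref{prop:dd_0_deriva_invert}. Once this cancellation is verified, no new estimates are required and the corollary is a direct specialization of \cref{thm:dd_converge}.
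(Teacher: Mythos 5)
Your proposal is correct and follows exactly the paper's route: the paper notes immediately before the corollary that the residual $\bddelta^k$ vanishes identically for the exact scheme \cref{eq:dd_iterative_scheme} and then simply reuses the proof of \cref{thm:dd_converge}, which is precisely your $\zeta=0$ specialization (with invertibility of $\mtbP$ and $\bphi$ giving full column rank of $\nabla\mtbK(\hat{\mtbf})$). One cosmetic point: the cancellation $(\mtbP\otimes\bphi)(\bphi^{-1}\otimes\mtbP^{-1})=\mathbf{I}$ actually yields $\nabla\mtbK(\hat{\mtbf})(\mtbf^{k+1}-\mtbf^k)=\mtbg-\mtbK(\mtbf^k)$ rather than $\mtbK(\mtbf^k)-\mtbg$, which is the sign convention consistent with \cref{eq:equal_cc_scheme} and with how \cref{eq:dd_equal_scheme_error} is actually used inside the proof of \cref{thm:dd_converge}, so the conclusion $\bddelta^k=\bzero$ is unaffected.
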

\begin{proof}
The proof of \cref{thm:dd_converge} can be directly used here, so we omit the detail. 
\end{proof}  

\begin{remark}
    For the iterative scheme \cref{eq:dd_scheme_approx}, the estimate of \cref{eq:convergence_rate} presents a linear convergence in the sense of $\|\nabla \mtbK(\hat{\mtbf})(\mtbf^{k}-\mtbf^*)\|_2$. 
    Furthermore, by \cref{eq:convergence_rate}, we have that 
    \begin{align*}
    \sigma_{\min}\bigl(\nabla \mtbK(\hat{\mtbf})\bigr)\|\mtbf^{k+1}-\mtbf^*\|_2 \le&  \|\nabla \mtbK(\hat{\mtbf})(\mtbf^{k+1}-\mtbf^*)\|_2 \\
    \le& \bigl(\gamma + \zeta(1 + \gamma)\bigr)\|\nabla \mtbK(\hat{\mtbf})(\mtbf^{k}-\mtbf^*)\|_2\\ 
    \le& \sigma_{\max}\bigl(\nabla \mtbK(\hat{\mtbf})\bigr)\bigl(\gamma + \zeta(1 + \gamma)\bigr)\|\mtbf^{k}-\mtbf^*\|_2,  
\end{align*}
which implies  
\begin{equation}\label{eq:rate_convergence}
\|\mtbf^{k+1}-\mtbf^*\|_2  \le \kappa\bigl(\nabla \mtbK(\hat{\mtbf})\bigr) \bigl(\gamma + \zeta(1 + \gamma)\bigr)\|\mtbf^{k}-\mtbf^*\|_2.  
\end{equation}
Here $\kappa(\cdot) = \sigma_{\max}(\cdot)/\sigma_{\min}(\cdot)$ denotes the condition number. Hence, if $\kappa\bigl(\nabla \mtbK(\hat{\mtbf})\bigr) \bigl(\gamma + \zeta(1 + \gamma)\bigr) \in (0, 1)$, the Q-linear convergence of the proposed algorithm is established. Here the prefix ``Q" stands for ``quotient". 
    \end{remark}
    
    \begin{remark}\label{rem:trade_off}
  For the iterative scheme \cref{eq:dd_scheme_approx}, a more precise implementation of $\mtbP^{+}$ can generate a smaller residual $\bddelta^k$ in \cref{eq:dd_equal_scheme_error}. Then, there is a smaller $\zeta$ satisfying the condition of \cref{eq:delta_k}. From \cref{eq:rate_convergence}, it is easy to know that the smaller the $\zeta$, the faster the convergence rate of the algorithm. 
    \end{remark}

\section{Numerical experiments}\label{sec:Numerical}

We carry out numerical experiments to validate the effectiveness and numerical convergence of the proposed algorithm AFIRE for solving the image reconstruction problem in the geometric-inconsistent DECT in the case of mildly full scan. 

The compared algorithms include some existing methods such as the nonconvex primal-dual (NCPD) algorithm \cite{chpan21}, the nonlinear Kaczmarz method (NKM) using cyclic strategy \cite{gao23NKM}, and the IFBP reconstruction method \cite{zhao_19}. Moreover, we consider a practical algorithm that firstly processes the geometric-inconsistent data by using linear interpolation to obtain the geometric-consistent data with errors. Then the two-step DDD method is used to conduct the reconstruction. In the first step, the Newton's method can be used to estimate the basis sinograms by solving a lot of very small-scale independent nonlinear systems along different X-ray paths. In the second step, to reconstruct the basis images, the FBP method is applied to invert the finally obtained basis sinograms. To show the intermediate results, the FBP method is also utilized to reconstruct the basis images from the estimated basis sinograms at each Newton iteration in the first step. We refer to this algorithm as INTRPL for short.

\subsection{Experiment settings}

The tests are conducted on a workstation running Python equipped with an Intel i9-13900K @3.0GHz CPU and Nvidia RTX A4000 GPU. The experiment settings for the energy spectra and the MAC are presented as below. We simulate the discrete X-ray energy spectra $\{\mtbs^{[q]}\}_{q=1}^Q$ by an open-source X-ray spectrum simulator, SpectrumGUI \cite{spectrum_GUI}. Specifically, the used energy spectra pair, as displayed in \cref{fig:spectra_pair}, contains an 80-kV spectrum and a filtered 140-kV spectrum of the GE maxiray 125 X-ray tube, where the filter is a copper of 1-mm width. The MAC of the basis materials, namely, water and bone, are sourced from \cite{NIST_data} and used as the decomposition coefficients $\{\mtbb_d\}_{d=1}^D$. Notably, the use of two basis materials is a standard and physically well-justified practice in clinical DECT. Additionally, we employ two full-scan parallel-beam inconsistent geometries for low and high energy spectra, respectively. 
\begin{figure}[htbp]
    \centering
    \includegraphics[width=0.55\textwidth]{./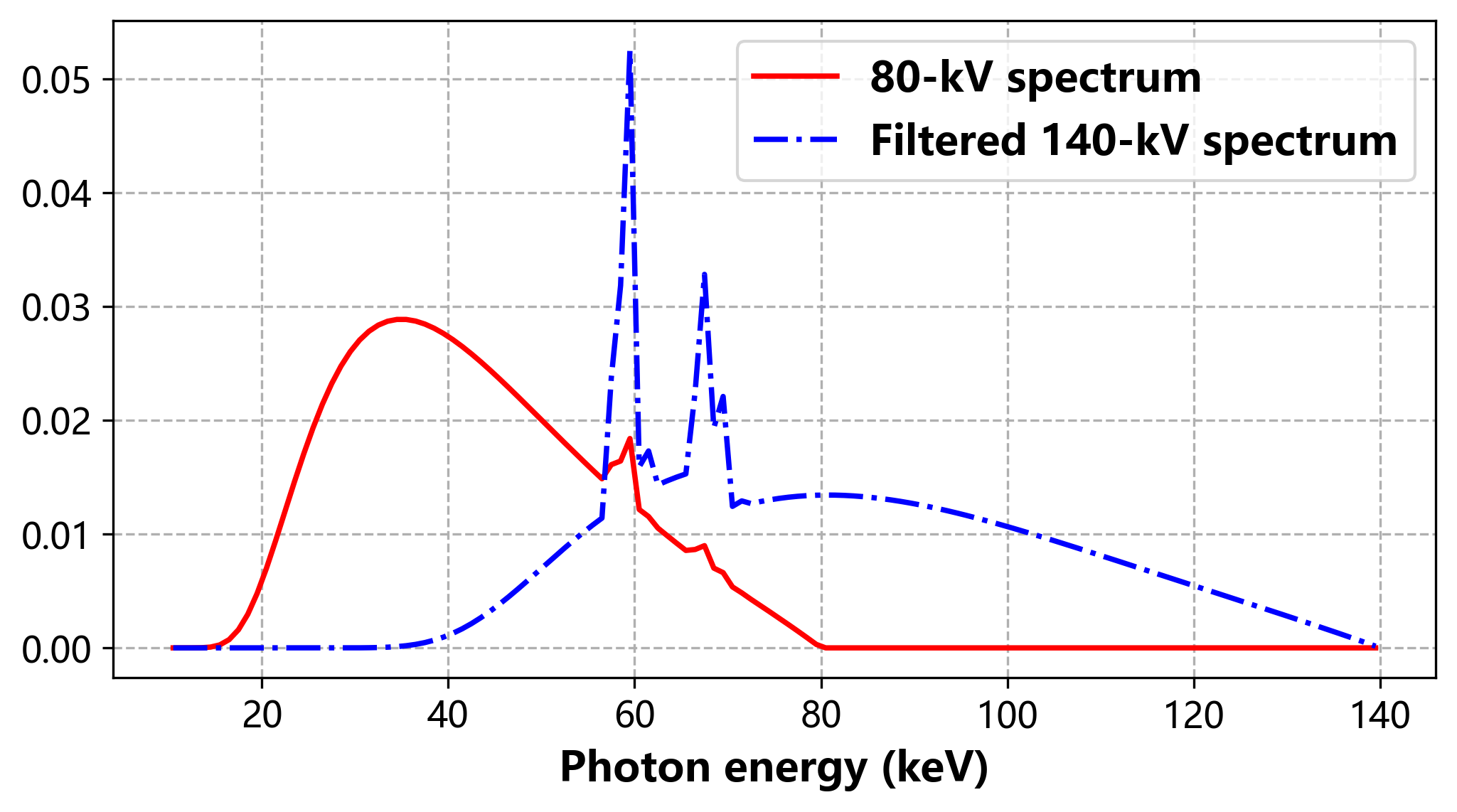}
    \caption{The used energy spectra pair in tests which contains an 80-kV spectrum (red, solid) and a filtered 140-kV spectrum (blue, dashed dot).}
    \label{fig:spectra_pair}
\end{figure}

With these configurations, we generate the simulated noiseless data by taking the spectra pair, MAC of water and bone, discrete X-ray transform and truth basis images into \cref{eq:discrete_imaging_op}. We utilize the following metrics of relative errors to assess the reconstruction accuracy 
\begin{align*}
    \text{RE}_{\mtbf}^{k} := \frac{\|\mtbf^{k} - \mtbf^*\|_2}{\|\mtbf^*\|_2},\quad \text{RE}_{\mtbg}^{k}:= \frac{\|\mtbK(\mtbf^{k}) - \mtbg\|_2}{\|\mtbg\|_2},
\end{align*}
where $\mtbf^{k}$ represents the $k$-th iteration point computed by the used algorithm, $\mtbf^*$ the given truth, and $\mtbg$ the simulated noiseless/noisy data. Moreover, we use the following error metrics between adjacent iterations to further evaluate the numerical convergence of the proposed algorithm 
\begin{align*}
    \Delta_{\mathbf{f}}^{k}:=\frac{\|\mathbf{f}^{k+1}-\mathbf{f}^{k}\|_2}{\|\mathbf{f}^{k}\|_2}, \quad \Delta_{\mathbf{g}}^k:=\frac{\|\mathbf{K}(\mathbf{f}^{k+1})-\mathbf{K}(\mathbf{f}^{k})\|_2}{\|\mathbf{g}\|_2}.
\end{align*} 

In \cref{alg:solve_dd_prob} of AFIRE, as stated in \cref{rem:dd_special_point,rem:inverse_comm}, we take $\hat{\mtbf}=\bzero$, and implement the $\mtbP_{q}^{+}$, $q = 1, \ldots, Q$ by FBP method where the 1-bandwidth Ram--Lak filter is used. Moreover, INTRPL and IFBP also use the same FBP method, and NCPD is used to solve a total variation (TV)-regularized nonlinear least square problem with nonnegative constraint. For NCPD, the regularization parameter is selected as $0$ and $10^{-6}$ for the noiseless and noisy data, respectively. Without loss of generality, the initial point for all tested algorithms are set to the zero vector.

\subsection{Simulated noiseless data for the forbild phantom}
\label{subsec:noiseless_forbild}

In this test, a forbild phantom is composed of the truth water and bone basis images which are both discretized as $128\!\times\!128$ pixels on the square area $[-5,5]\!\times\![-5,5]$ $\text{cm}^2$ and take gray values over $[0, 1]$, as shown in row 1 of \cref{fig:recon_forbild}. 

The X-ray paths under the different spectra displayed in \cref{fig:spectra_pair} are completely  inconsistent. Specifically, for the 80-kV spectrum, we utilize 384 parallel projections uniformly sampled on $[-7.05, 7.05]$ cm for each of the 384 views, where these views are uniformly distributed over the interval $[0, \pi)$ and can be seen as a mildly full scan. Under the filtered 140-kV spectrum, the scanning also has the same number of projections and views, but the views are uniformly distributed over the interval $[\pi/768, \pi/768+\pi)$.

After 100 iterations, we plot the curves of metrics $\text{RE}_{\mtbf}^{k}$ and $\text{RE}_{\mtbg}^{k}$ in \cref{fig:REk_forbild}. 
\begin{figure}[htbp]
    \centering
    \includegraphics[width=0.49\textwidth]{./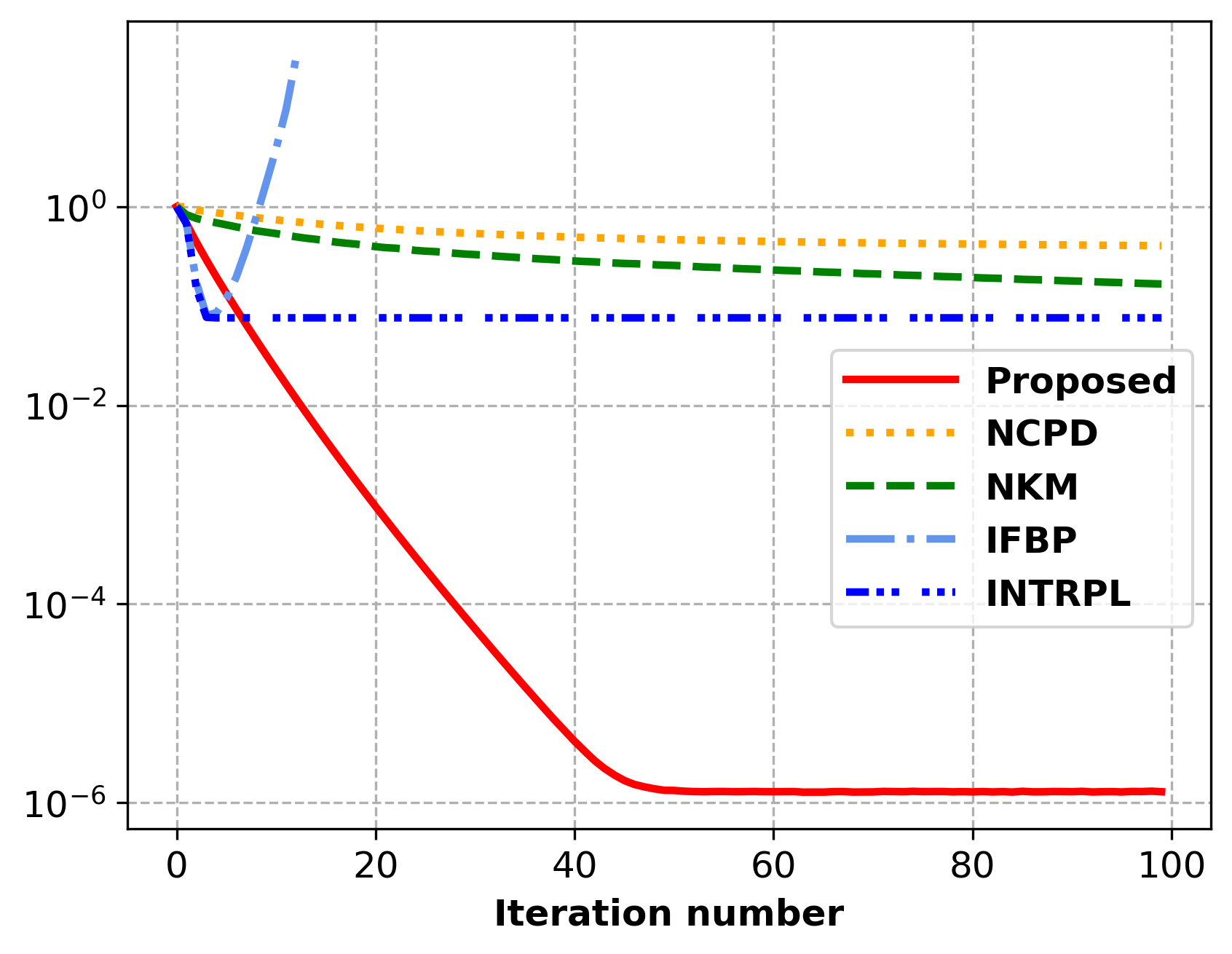}
    \includegraphics[width=0.49\textwidth]{./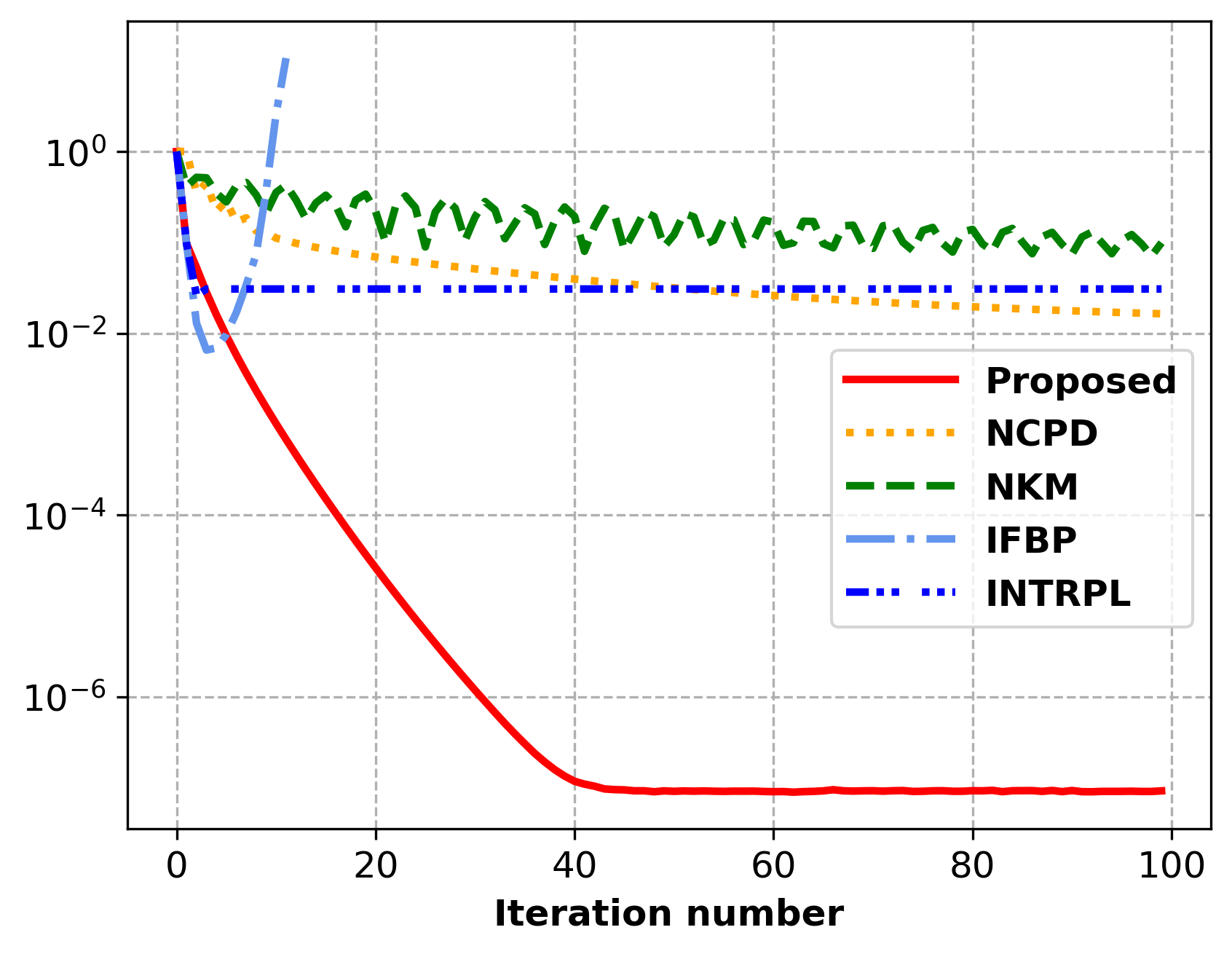}
    \caption{Metrics {\rm RE$_{\mtbf}^{k}$} (left) and {\rm RE$_{\mtbg}^{k}$} (right) are plotted in semi-log scale as functions of iteration numbers for reconstructing basis images of the forbild phantom by the proposed algorithm AFIRE, and NCPD, NKM, IFBP, INTRPL from the noiseless data.}
    \label{fig:REk_forbild}
\end{figure} 
It is shown by the metrics of relative error that the proposed algorithm AFIRE has reached numerical convergence and reconstructed high-accuracy result in less than 50 iterations, thereby demonstrating its effectiveness and accuracy for inverting noiseless data. The results also confirm the convergence theory established in \cref{thm:dd_converge}. In contrast, NCPD and NKM are still far from convergence, and INTRPL can only produce a low-accuracy result. Particularly, it is observed that IFBP does not converge. This is because IFBP, as a heuristic approach that designed for geometric-inconsistent case, lacks rigorous theoretical guarantees. 

The metrics of relative errors $\text{RE}_{\mtbf}^{k}$ and $\text{RE}_{\mtbg}^{k}$ over time are also presented in \cref{fig:test1_time}. We observe that the proposed algorithm AFIRE exhibits superior performance, demonstrating its advantages in both efficiency and accuracy over the other compared algorithms. 
\begin{figure}[htbp]
    \centering
    \includegraphics[width=0.49\textwidth]{./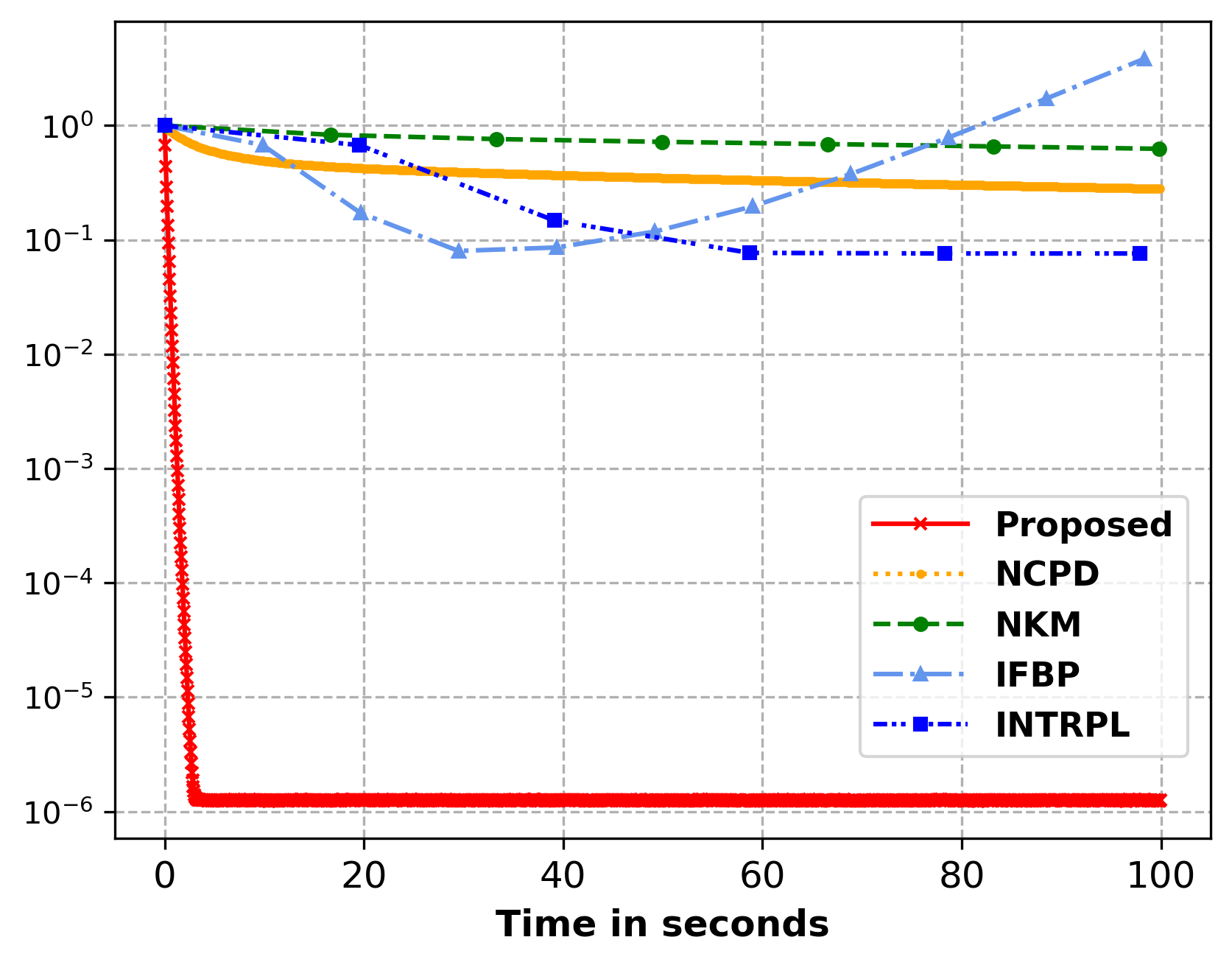}
    \includegraphics[width=0.49\textwidth]{./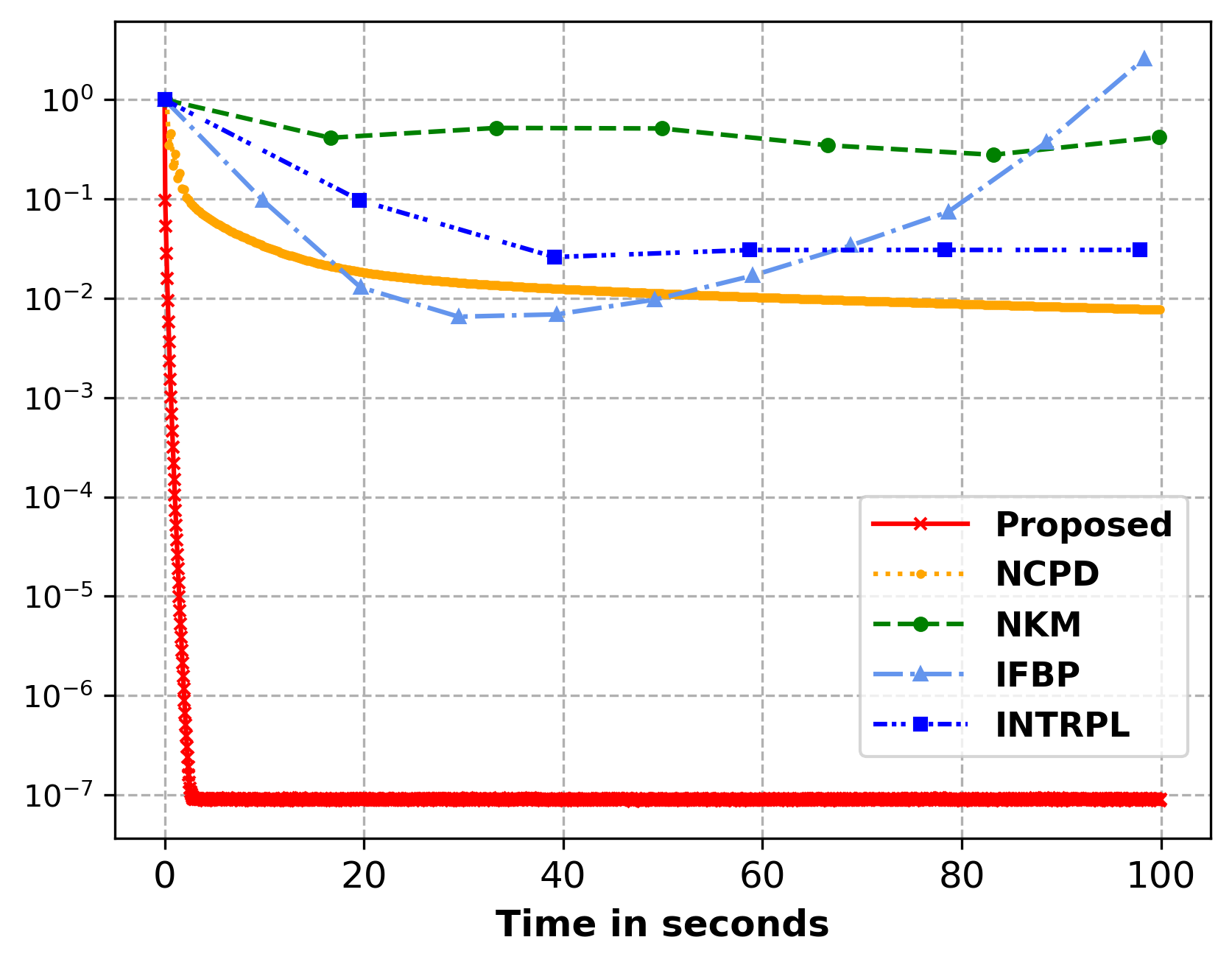}
    \caption{Metrics {\rm RE$_{\mtbf}^{k}$} (left) and {\rm RE$_{\mtbg}^{k}$} (right) are plotted in semi-log scale over time (in seconds) for reconstructing basis images of the forbild phantom by the proposed algorithm AFIRE, and NCPD, NKM, IFBP, INTRPL from the noiseless data.}
    \label{fig:test1_time}
\end{figure}  

We further plot the curves of metrics $\Delta_{\mathbf{f}}^{k}$ and $\Delta_{\mathbf{g}}^k$ in \cref{fig:conver_metric_forbild}, which also demonstrate the numerical convergence of the proposed algorithm AFIRE. 
\begin{figure}[htbp]
    \centering
    \includegraphics[width=0.49\textwidth]{./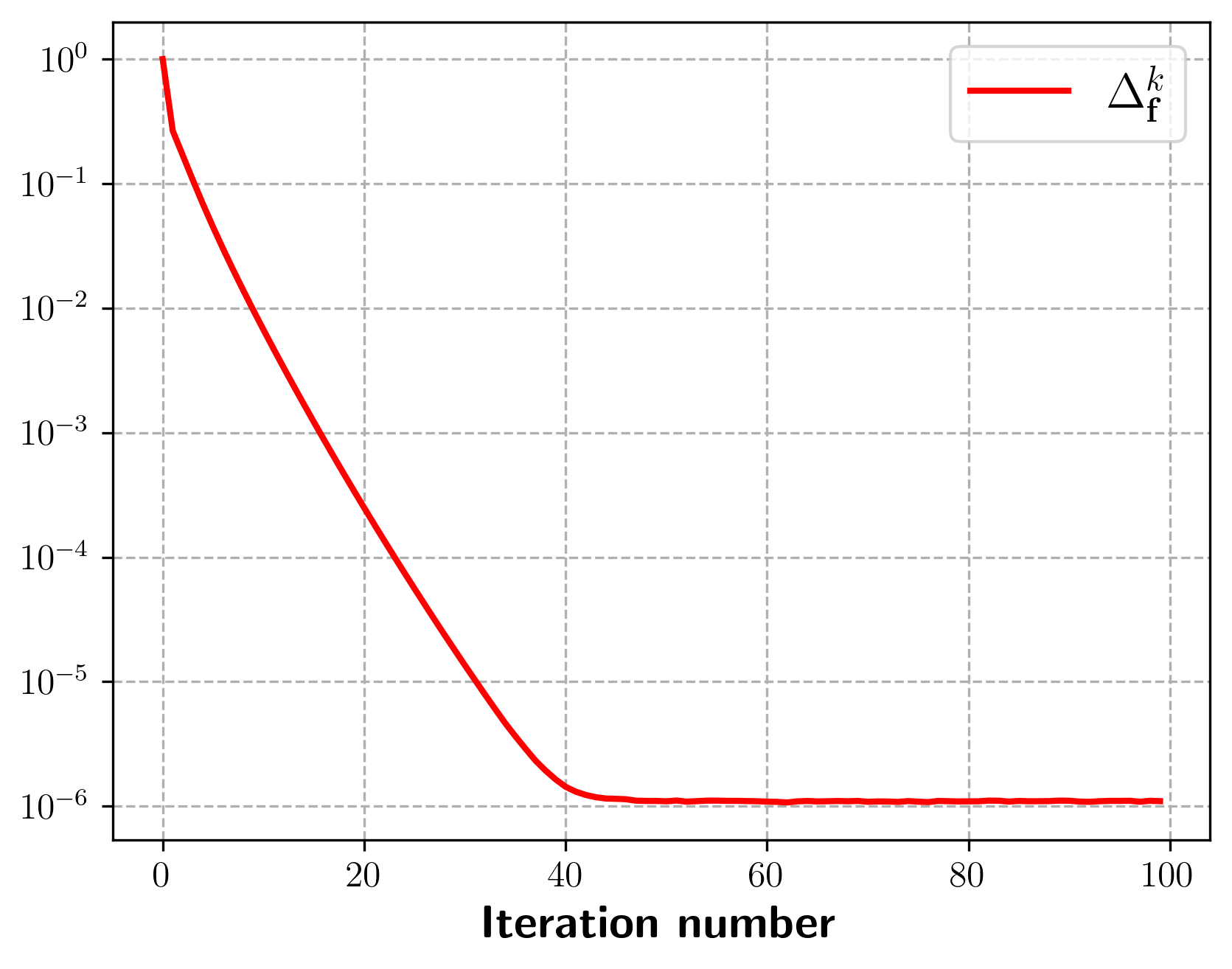}
    \includegraphics[width=0.49\textwidth]{./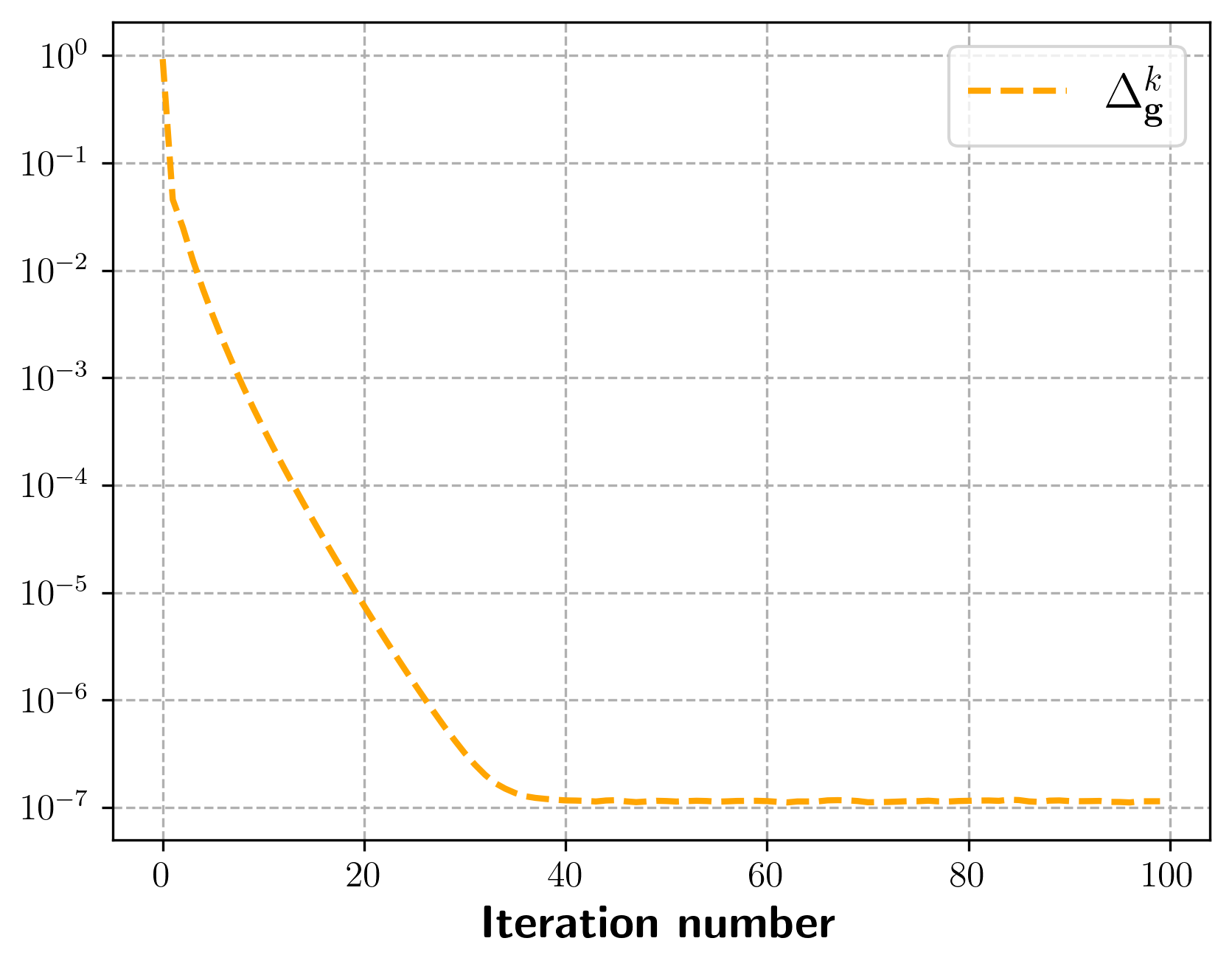}
    \caption{Metrics $\Delta_{\mathbf{f}}^{k}$ (left) and $\Delta_{\mathbf{g}}^k$ (right) are plotted in semi-log scale as functions of iteration numbers for reconstructing basis images of the forbild phantom by the proposed algorithm AFIRE from the noiseless data.}
    \label{fig:conver_metric_forbild}
\end{figure} 

Furthermore, as shown in \cref{fig:recon_forbild}, the reconstructed basis images and the obtained VMIs after 100 iterations are displayed for all the above algorithms except for IFBP, which reveal that the reconstructed results by the proposed algorithm AFIRE are closest to the truths.  
\begin{figure}[htbp]
    \centering
    \includegraphics[width=0.9\textwidth]{./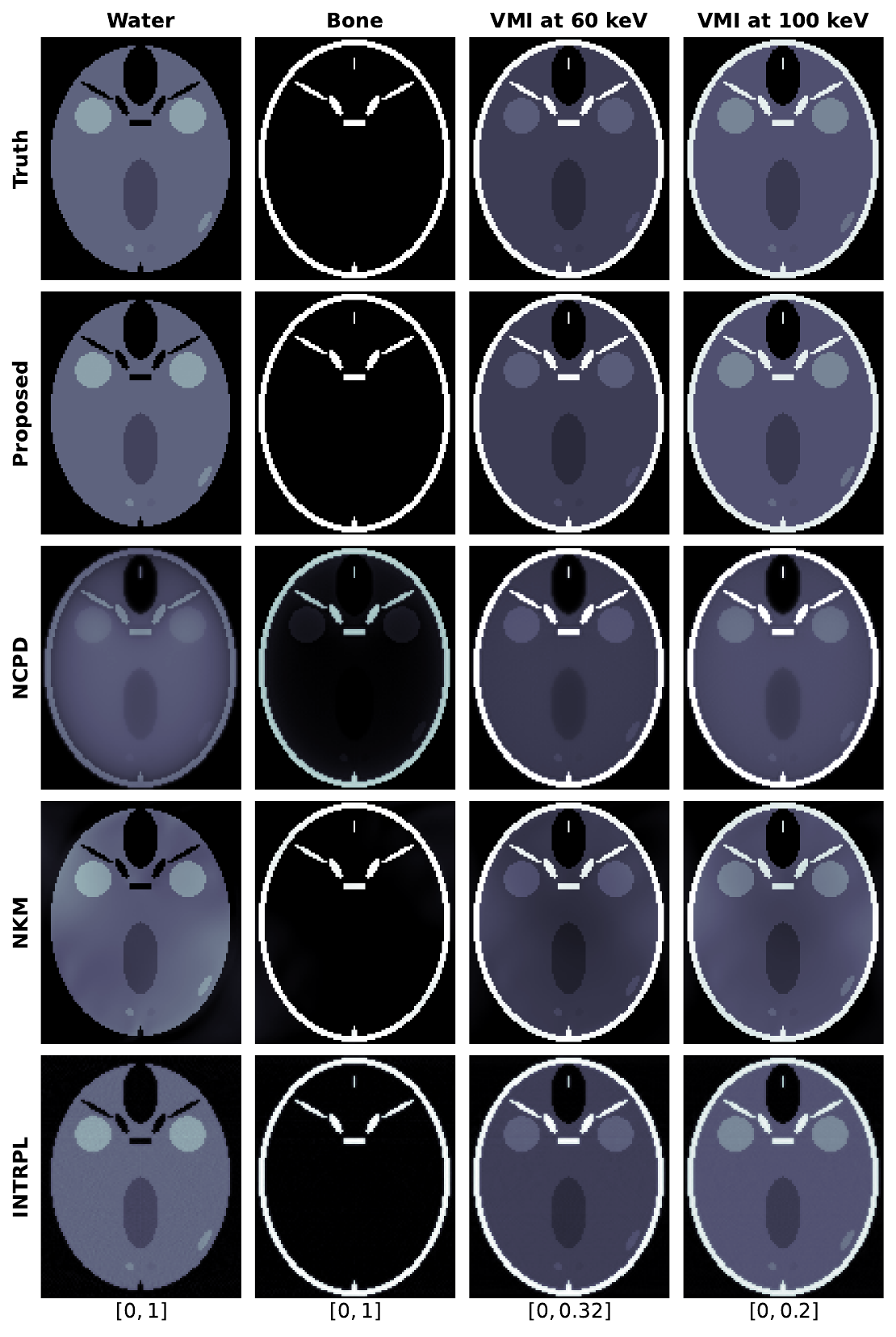}
    \caption{From left to right: basis images of water and bone, VMIs at energies 60 keV and 100 keV  of the forbild phantom. From top to bottom: the truths, the results after 100 iterations using the proposed algorithm AFIRE, and NCPD, NKM, INTRPL.}
    \label{fig:recon_forbild}
\end{figure} 

\subsection{Simulated noisy data for the realistic torso image}\label{subsec:noisy_torso}

For this test, a real patient torso image is decomposed into the truth water and bone basis images which are both discretized as $256\!\times\!256$ pixels on the square area $[-5,5]\!\times\![-5,5]$ $\text{cm}^2$ and take gray values over $[0, 1]$, as shown in row 1 of \cref{fig:recon_torso}.

Similarly, the X-ray paths under the different spectra shown in \cref{fig:spectra_pair} are also completely inconsistent. More precisely, under the 80-kV spectrum, 768 parallel-beam projections uniformly distributed on $[-7.05, 7.05]$ cm are taken for each of 768 views, where these views are evenly sampled on the interval $[0, \pi)$. As to the filtered 140-kV spectrum, the same numbers of projections and views are utilized, but the views are uniformly distributed over the interval $[\pi/1536, \pi/1536+\pi)$. Moreover, the 34.3 dB noisy data is generated by adding Gaussian noise onto the noiseless data. 

We plot the curves of metrics $\text{RE}_{\mtbf}^{k}$ and $\text{RE}_{\mtbg}^{k}$ in \cref{fig:REk_torso} within 100 iterations. It is obvious that the metrics of relative errors for the proposed algorithm AFIRE exhibit numerical convergence and relatively high accuracy in less than $15$ iterations, demonstrating its effectiveness and stability for addressing noisy data. On the contrary, NCPD and NKM have not converged yet; INTRPL can only produce a low-accuracy result; IFBP is divergent after a couple of iterations.  
\begin{figure}[htbp]
    \centering
    \includegraphics[width=0.49\textwidth]{./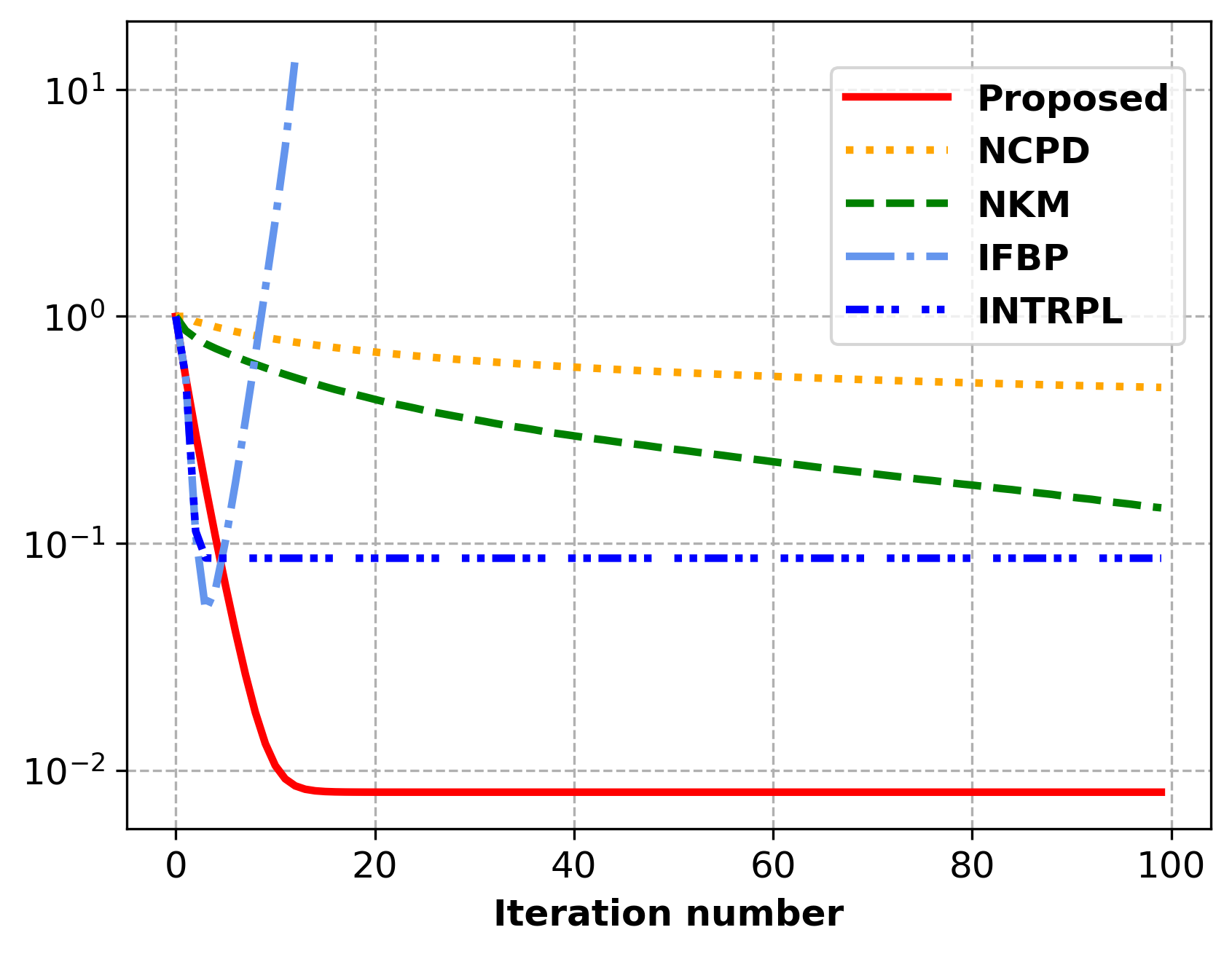}
    \includegraphics[width=0.49\textwidth]{./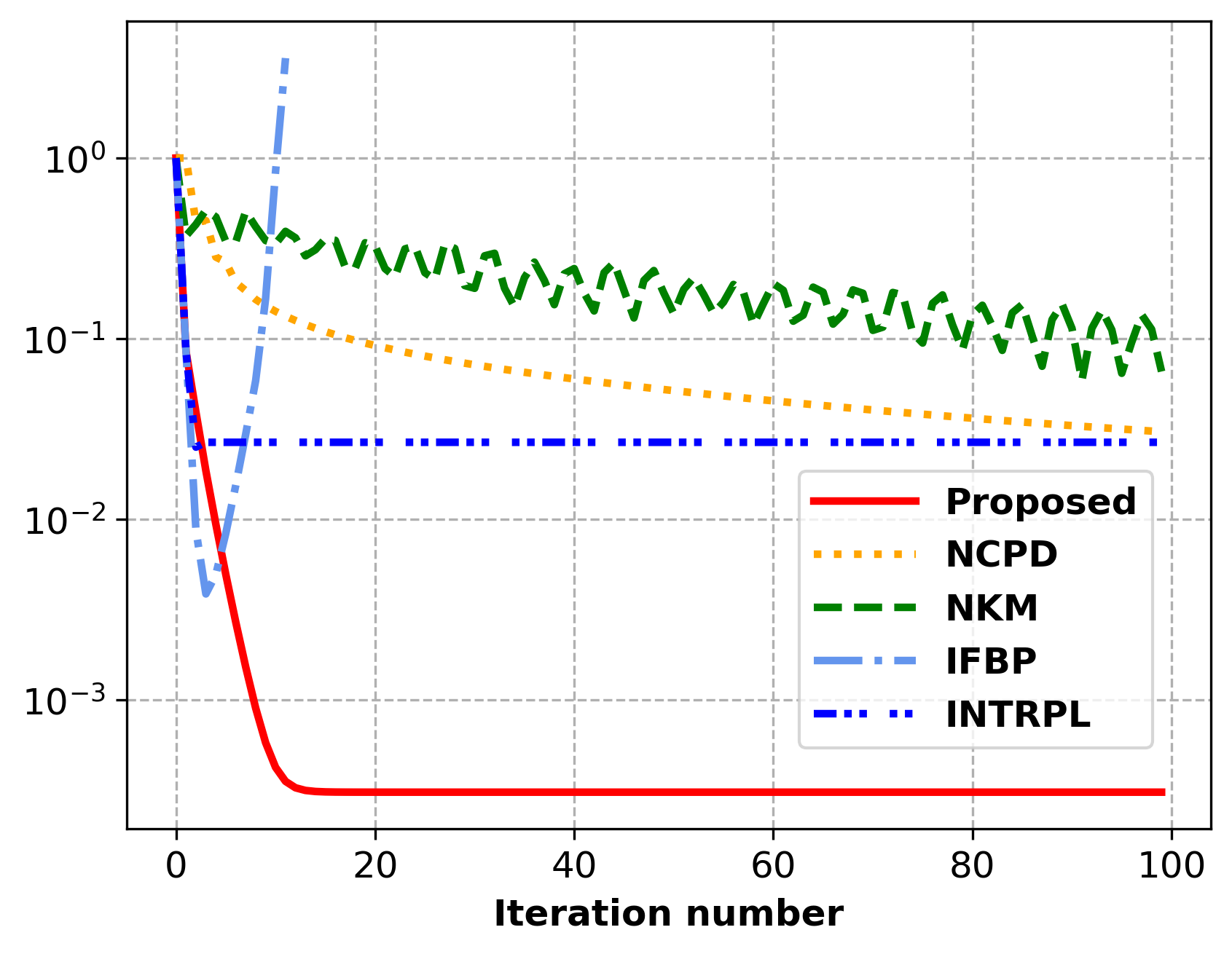}
    \caption{Metrics {\rm RE$_{\mtbf}^{k}$} (left) and {\rm RE$_{\mtbg}^{k}$} (right) are plotted in semi-log scale as functions of iteration numbers for reconstructing basis images of the real torso image by  the proposed algorithm AFIRE, and NCPD, NKM, IFBP, INTRPL from the noisy data.}
    \label{fig:REk_torso}
\end{figure} 

The metrics of relative errors $\text{RE}_{\mtbf}^{k}$ and $\text{RE}_{\mtbg}^{k}$ over time are presented in \cref{fig:test2_time}. It is observed that the proposed algorithm AFIRE exhibits superior performance, demonstrating its advantages in both efficiency and accuracy over the other compared algorithms. 
\begin{figure}[htbp]
    \centering
    \includegraphics[width=0.49\textwidth]{./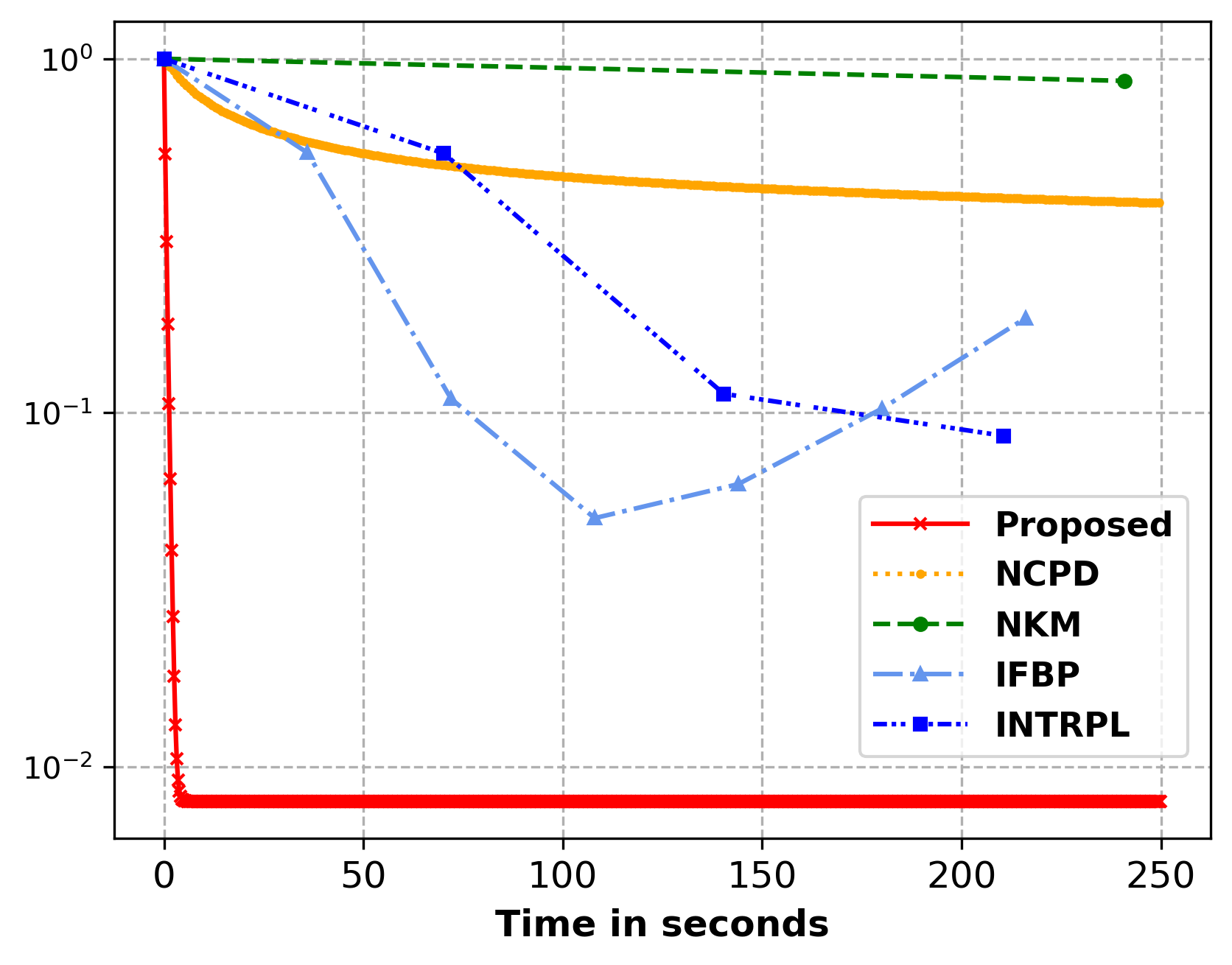}
    \includegraphics[width=0.49\textwidth]{./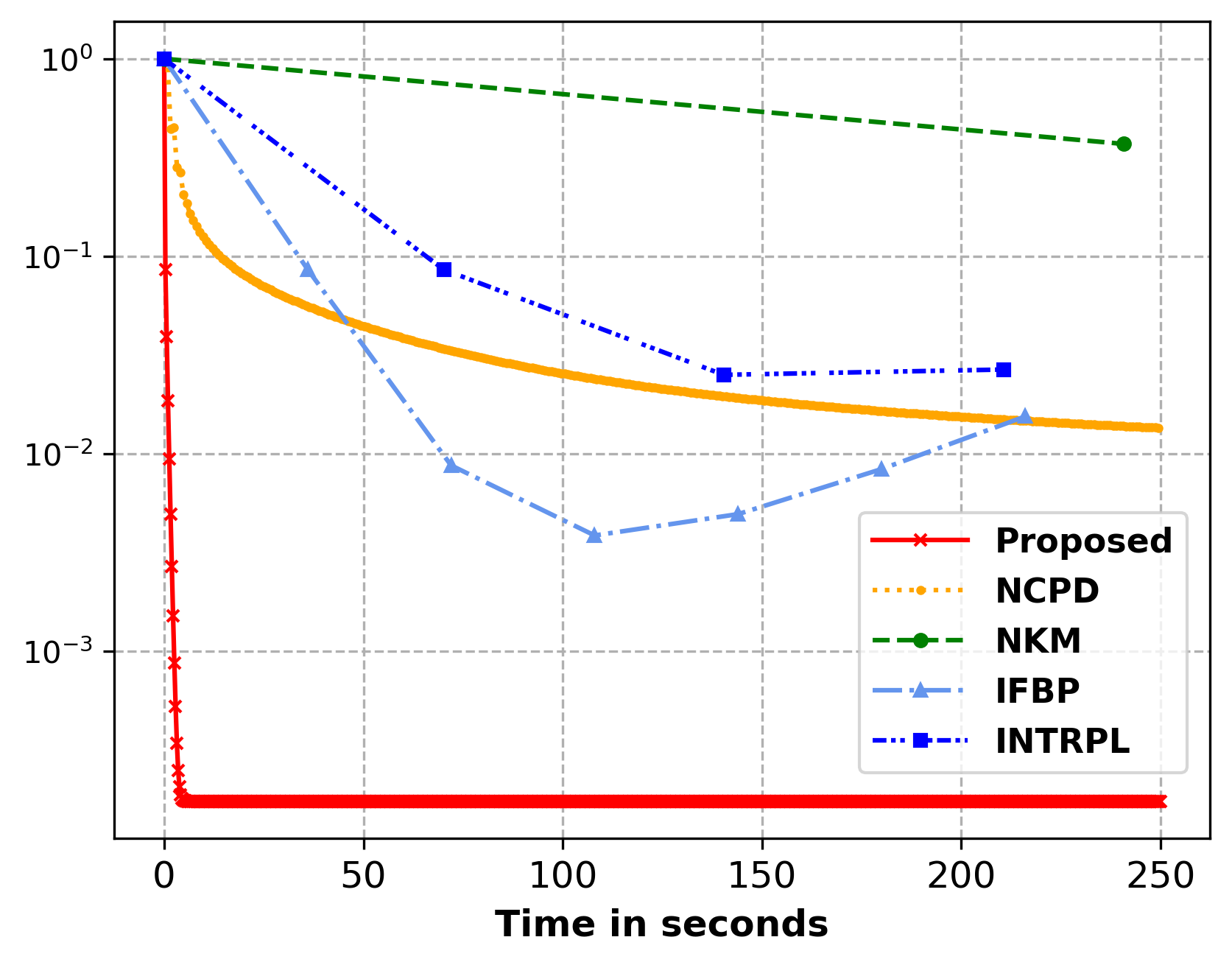}
    \caption{Metrics {\rm RE$_{\mtbf}^{k}$} (left) and {\rm RE$_{\mtbg}^{k}$} (right) are plotted in semi-log scale over time (in seconds) for reconstructing basis images of the real torso image by the proposed algorithm AFIRE, and NCPD, NKM, IFBP, INTRPL from the noisy data.}
    \label{fig:test2_time}
\end{figure} 
We also present the curves of metrics $\Delta_{\mathbf{f}}^{k}$ and $\Delta_{\mathbf{g}}^k$ in \cref{fig:conver_metric_torso}, which validate the numerical convergence of the proposed algorithm AFIRE. 
\begin{figure}[htbp]
    \centering
    \includegraphics[width=0.49\textwidth]{./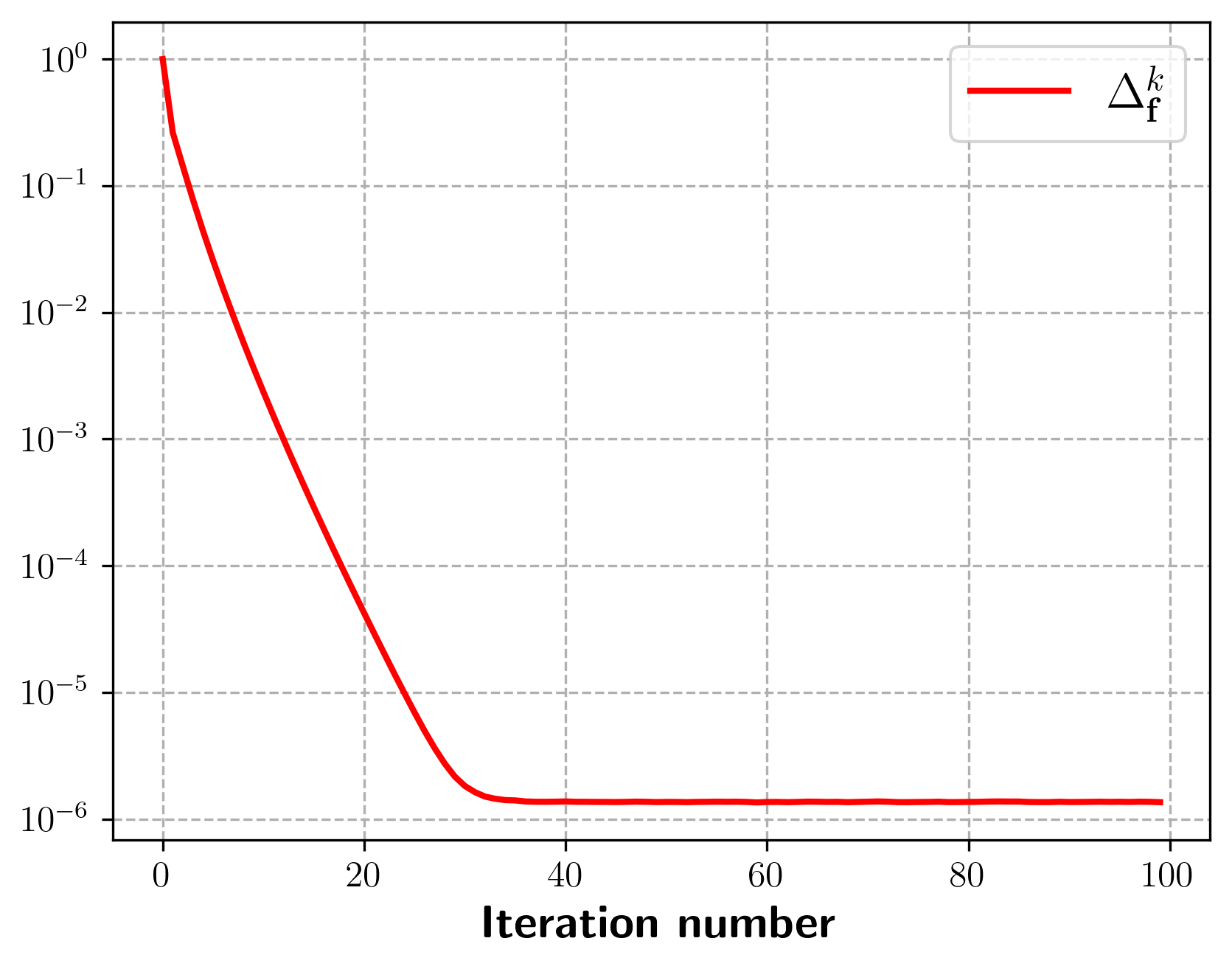}
    \includegraphics[width=0.49\textwidth]{./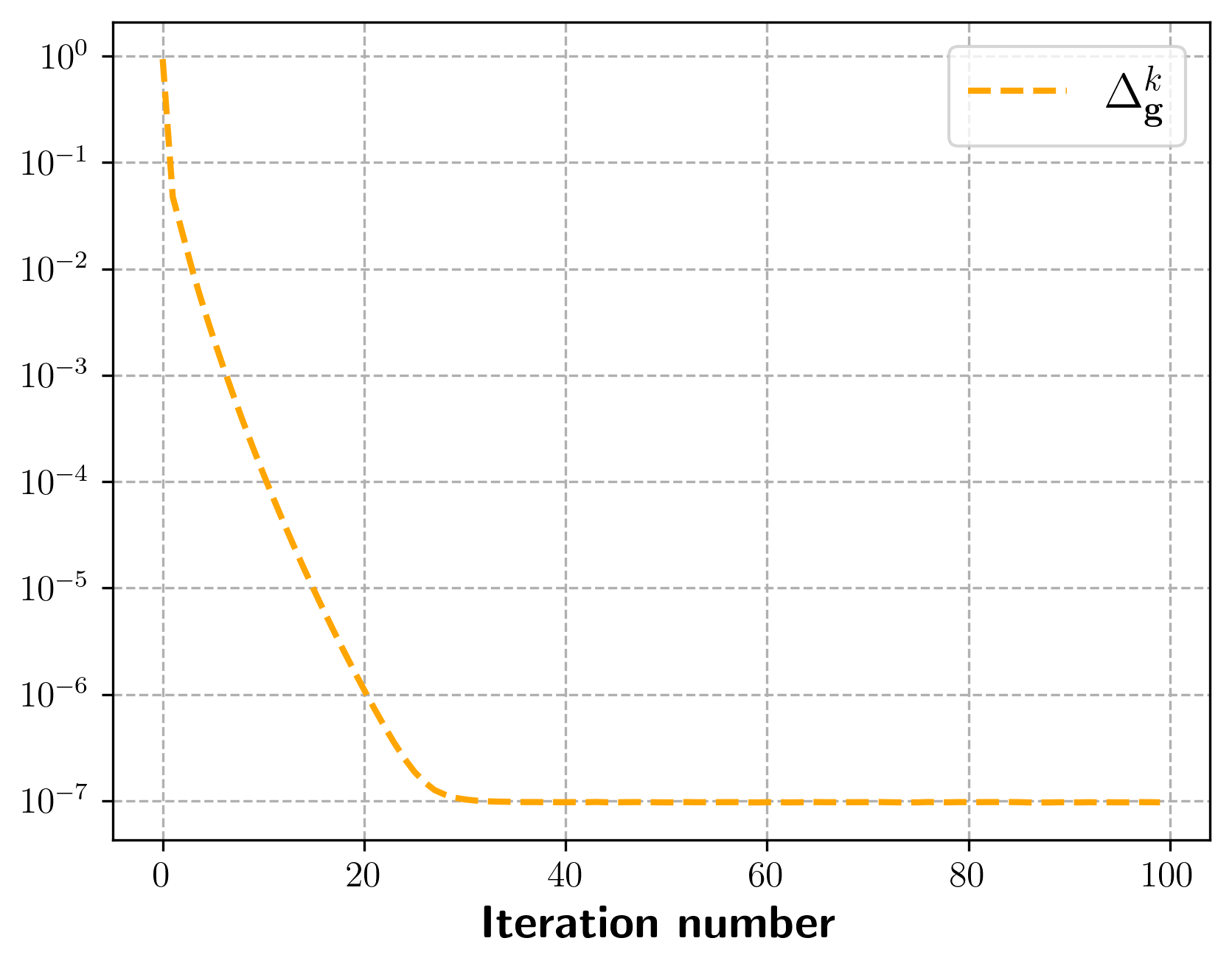}
    \caption{Metrics $\Delta_{\mathbf{f}}^{k}$ (left) and $\Delta_{\mathbf{g}}^k$ (right) are plotted in semi-log scale as functions of iteration numbers for reconstructing basis images of the real torso image by the proposed algorithm AFIRE from the noisy data.}
    \label{fig:conver_metric_torso}
\end{figure}

Moreover, after 100 iterations, the reconstructed basis images and the obtained VMIs are displayed in \cref{fig:recon_torso} for all the above algorithms except for IFBP, which reveal that the reconstructed images by the proposed algorithm AFIRE are closest to the truths. 
\begin{figure}[htbp]
    \centering
    \includegraphics[width=1.\textwidth]{./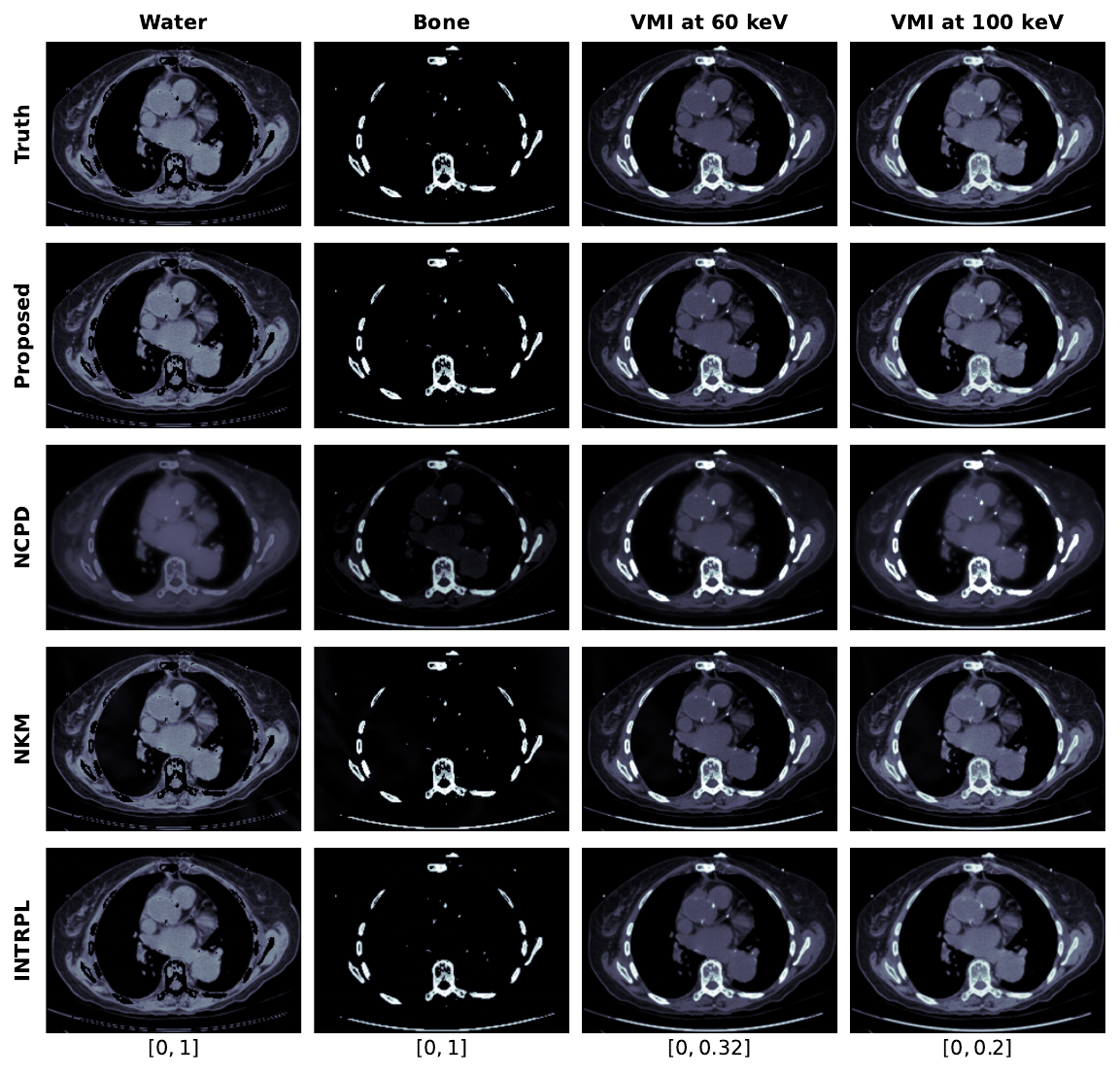}
    \vspace{-5mm}
    \caption{From left to right: basis images of water and bone, VMIs at energies 60 keV and 100 keV of the realistic torso image. From top to bottom: the truths, the results after 100 iterations using the proposed algorithm AFIRE, and NCPD, NKM, INTRPL.}
    \label{fig:recon_torso}
\end{figure}

\section{Discussion}
\label{sec:discussion}

In this section, we discuss more details about the proposed algorithm AFIRE. Actually, this algorithm can be seen as a framework for solving the image reconstruction problem in MSCT, since there are two main adjustable modules, for instance, including the selection of the special point $\hat{\mtbf}$ and the implementation of the approximated inverse $\mtbP^{+}$ in \cref{alg:solve_dd_prob} of AFIRE. On the other hand, although this algorithm was originally designed to handle the reconstruction problem of geometric inconsistency between low and high X-ray energy spectra,  it can be directly used to solve the problem in geometric-consistent situation. Hence, the flexibility and extensibility may enhance the applicability of the proposed algorithm. 

To begin with, we introduce some basic experiment settings. The same energy spectra and MAC are applied as those described in \cref{sec:Numerical}. Differently, the tested water and bone basis images are originated from the realistic CT images which are randomly selected from the dataset of LoDoPaB-CT \cite{leuschner2021lodopab}, as shown in row 1 of \cref{fig:discuss_recon_hatf,fig:discuss_recon_P_inv,fig:discuss_recon_geometry_consis}. All the basis images are discretized as $362\!\times\!362$ pixels on the square area $[-15,15]\!\times\![-15,15]$ $\text{cm}^2$. 

In the discussion of the two adjustable modules, under the 80-kV spectrum, we employ 1086 parallel projections evenly sampled on $[-21.15, 21.15]$ cm for each of the 900 views, where these views are uniformly distributed over the interval $[0, \pi)$. Under the filtered 140-kV spectrum, the scanning uses the same numbers of projections and views, but the views are uniformly distributed over the interval $[\pi/1800, \pi/1800+\pi)$. With these configurations, we generate simulated noiseless data as before and apply \cref{alg:solve_dd_prob} with selecting  different modules to solve the corresponding reconstruction problems.

\subsection{Selection of the special point $\hat{\mtbf}$}
\label{dis:hatf}

Based on our insights in \cref{sec:propose_alg}, in the discrete (continuous) case, obviously, for the real constant set $\{C_d^{[q]}=0\}_{d,q=1}^{D,Q}$ ($\{C_d = 0\}_{d=1}^D$), there exists $\hat{\mtbf} = \bzero$ ($\hat{\bdf} = \bzero$) fulfilling the condition \cref{eq:dd_special_points} (the condition \cref{eq:cc_special_points}). As we can imagine, those are almost the simplest examples that satisfying the required conditions. 

The results in \cref{sec:Numerical} have demonstrated the effectiveness and numerical convergence of the proposed algorithm when we set each $C_d^{[q]} = 0$ with $\hat{\mtbf}=\bzero$. Besides that, we further investigate alternative choices. Notably, the implementation of \cref{alg:solve_dd_prob} depends only on the values of certain constant set $\{C_d^{[q]}\}_{d,q=1}^{D,Q}$, even if it is unknown whether there exists an $\hat{\mtbf}$ to satisfy the condition \cref{eq:dd_special_points}. 

To discuss these alternatives, we just generate three random arrays over [0,1] as different selections of $\{C_d^{[q]}\}_{d,q=1}^{D,Q}$ using \texttt{numpy.random} with different seeds, but ignore searching for the concrete special point $\hat{\mtbf}$.  Additionally, all the $\mtbP_{q}^{+}$, $q=1, \ldots, Q$, are implemented by the FBP with 1-bandwidth Ram--Lak filter. 

As shown in \cref{fig:discuss_RE_choice_hatf}, we plot the curves of relative errors $\text{RE}_{\mtbf}^{k}$ and $\text{RE}_{\mtbg}^{k}$ in $100$ iterations. 
\begin{figure}[htbp]
    \centering
    \includegraphics[width=0.49\textwidth]{./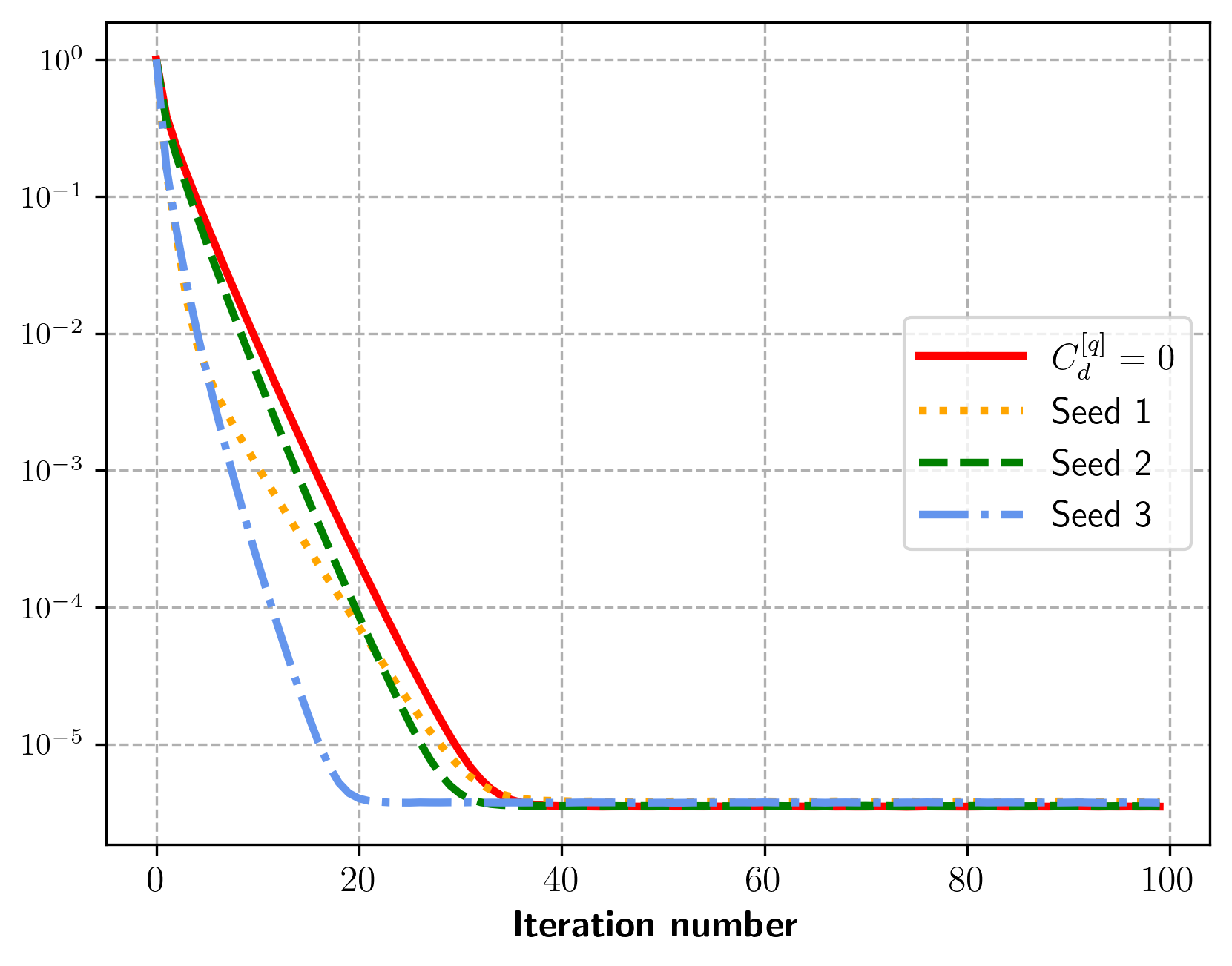}
    \includegraphics[width=0.49\textwidth]{./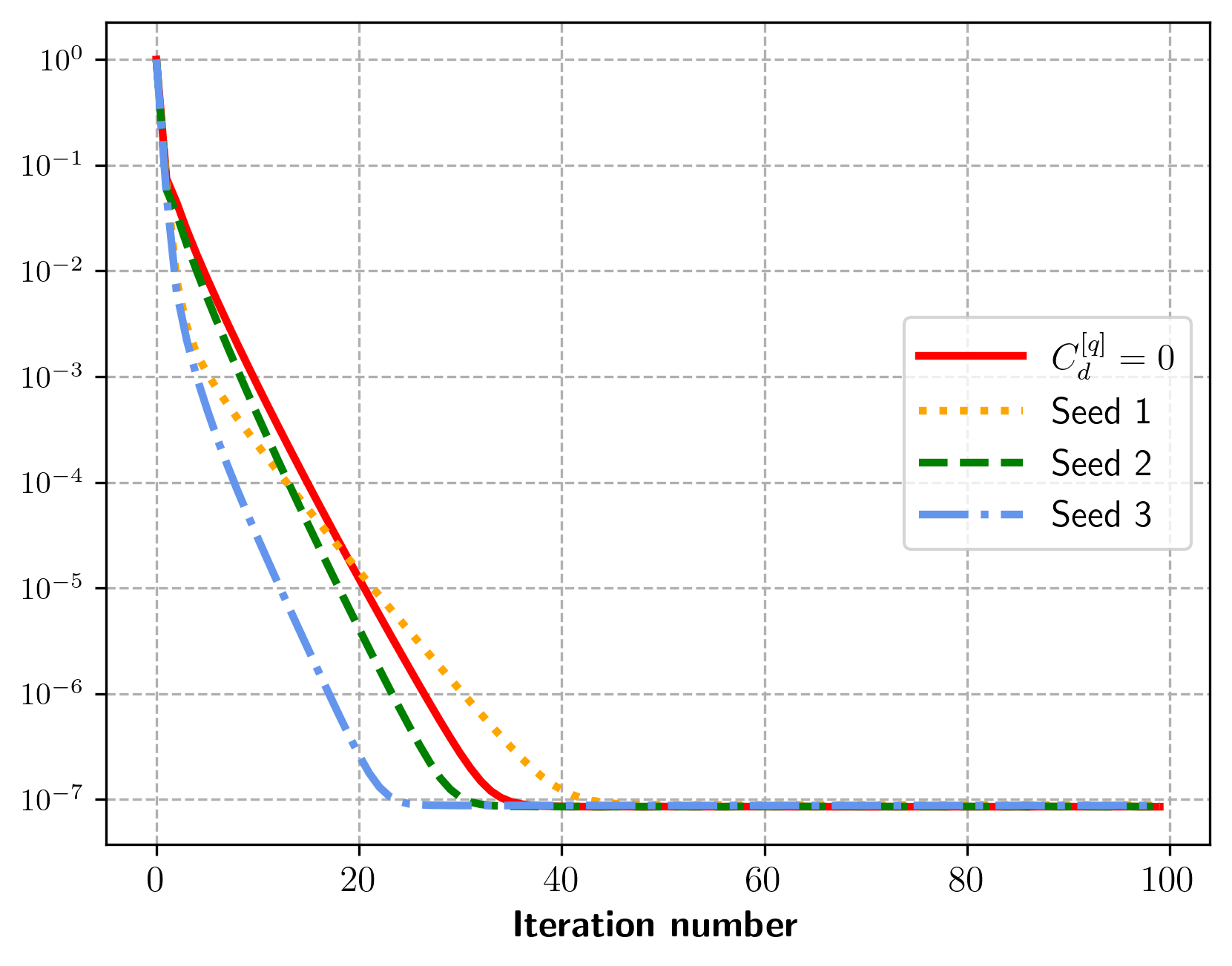}
    \caption{Metrics {\rm RE$_{\mtbf}^{k}$} (left) and {\rm RE$_{\mtbg}^{k}$} (right) are plotted in semi-log scale as functions of iteration numbers for reconstructing basis images of a clinical image in \cref{fig:discuss_recon_hatf} by the proposed algorithm AFIRE with different selections of $\{C_d^{[q]}\}_{d,q=1}^{D,Q}$.}
    \label{fig:discuss_RE_choice_hatf}
\end{figure}
It is discovered that the convergence behaviors of the proposed algorithm using these three different arrays are almost the same as that using the vanishing one. The basis images and VMIs obtained by the proposed algorithm using these arrays are displayed in \cref{fig:discuss_recon_hatf}, and they are visually comparable and are all quite close to the truth images. 
\begin{figure}[htbp]
    \centering
    \includegraphics[width=1.\textwidth]{./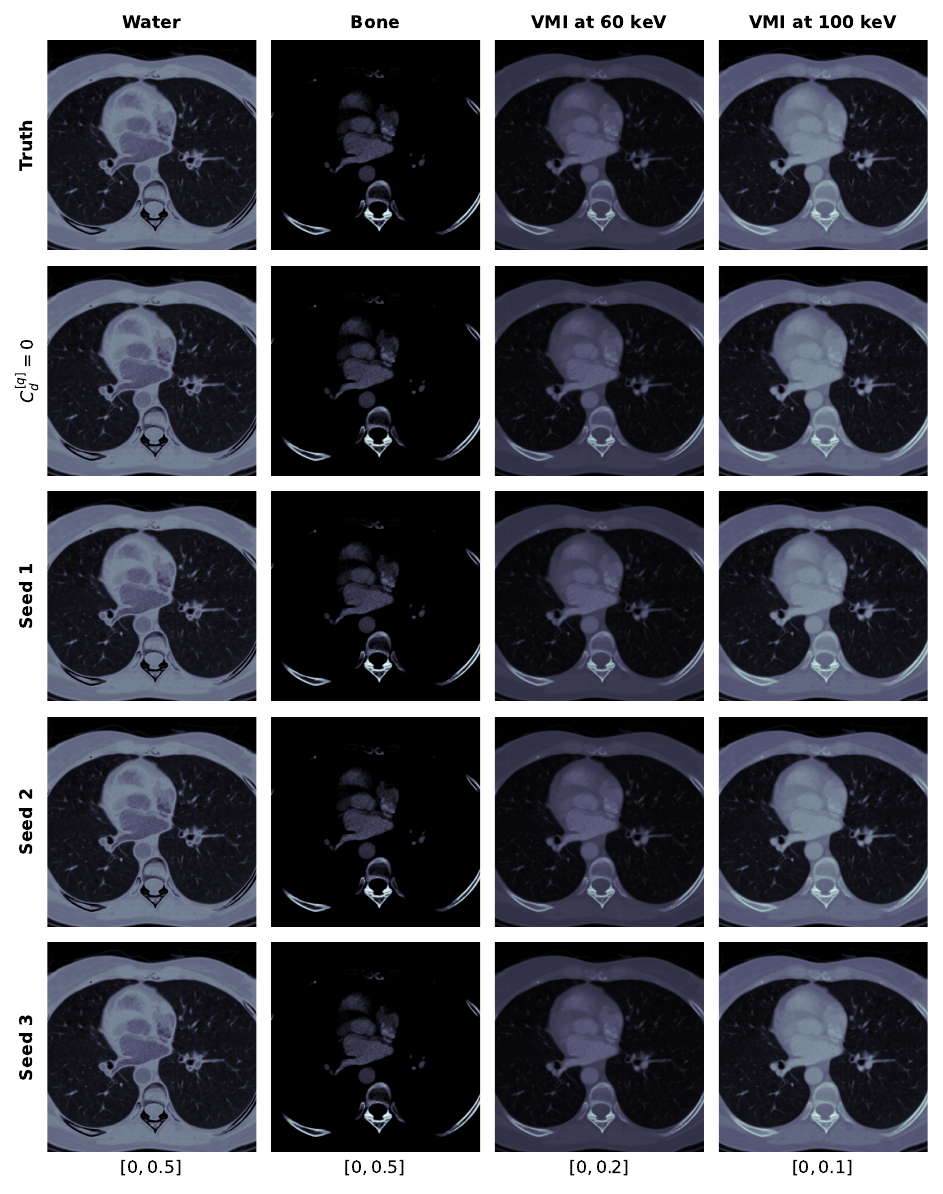}
    \vspace{-5mm}
    \caption{From left to right: basis images of water and bone, VMIs at energies 60 keV and 100 keV of a clinical image. From top to bottom: the truths, the results after 100 iterations by the proposed algorithm AFIRE with different selections of $\{C_d^{[q]}\}_{d,q=1}^{D,Q}$.}
    \label{fig:discuss_recon_hatf}
\end{figure}

These results indicate that the convergence, accuracy and efficiency of the proposed algorithm AFIRE  remain robust across different choices of $\{C_d^{[q]}\}_{d,q=1}^{D,Q}$, and naturally the selection of the special point $\hat{\mtbf}$ is also flexible. This enhances the applicability of the algorithm for various practical scenarios without compromising performance.

\subsection{Implementation of the approximated inverse $\mtbP^{+}$}

In \cref{sec:Numerical}, we employed the FBP with 1-bandwidth Ram--Lak filter to implement the approximated inverse $\mtbP^{+}$. Besides that, the proposed algorithm AFIRE also allows us to adopt different implementations for $\mtbP^{+}$. Specifically, we further consider the other two different implementation methods: conjugate gradient (CG) method with 20 inner iterations and limited-memory BFGS (L-BFGS) method with 60 inner iterations. Note that, the numbers of the inner iterations are not fixed, which can be modified on demand. All the implementations are running with Operator Discretization Library (ODL) \cite{adler2018odlgroup}. 

Here, the numerical experiments are carried out to evaluate the feasibility and impact of the different implementations for $\mtbP^{+}$ on the performance of the proposed algorithm. 

We still set $\hat{\mtbf}=\bzero$. After running 100 iterations with \cref{alg:solve_dd_prob}, the obtained basis images and VMIs using those different implementations are shown in \cref{fig:discuss_recon_P_inv}, which are quite close to the corresponding truths. 
\begin{figure}[htbp]
    \centering
    \includegraphics[width=1.\textwidth]{./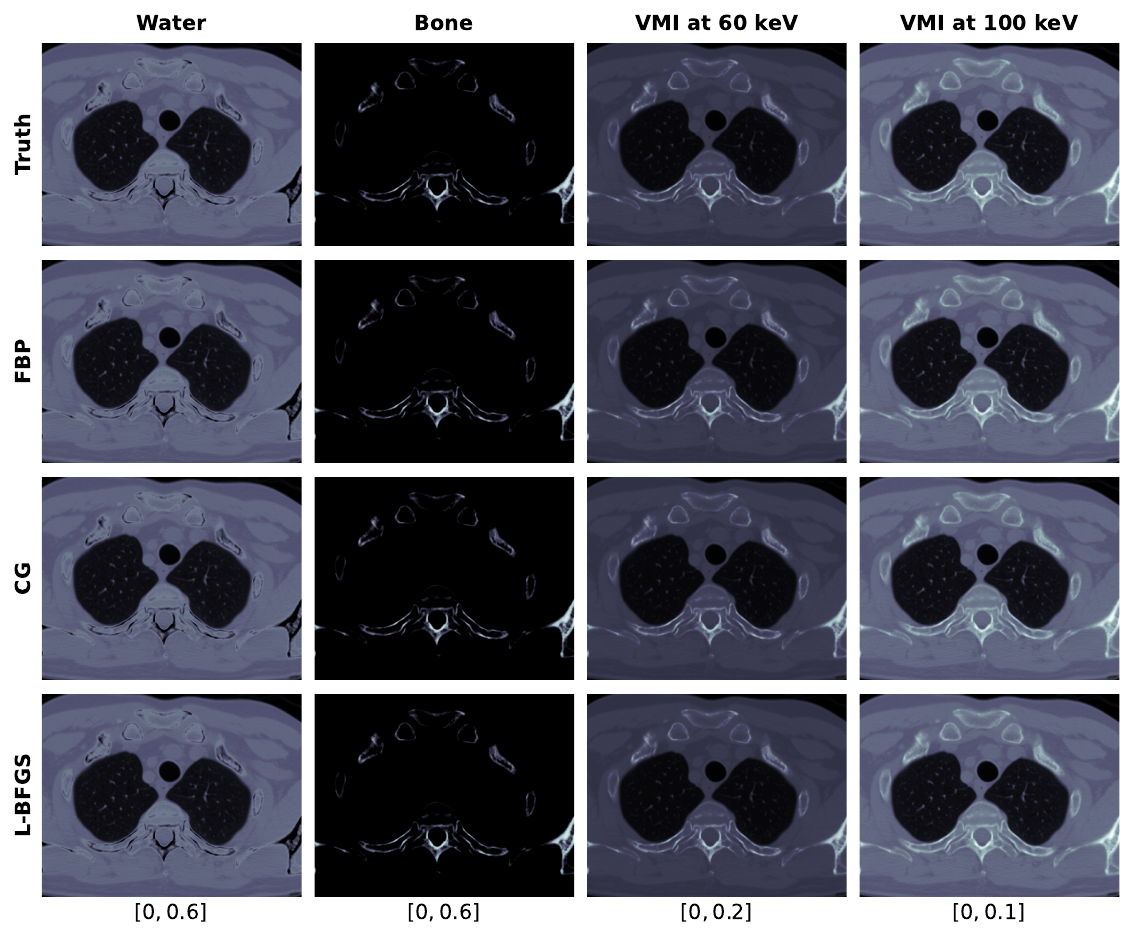}
    \vspace{-5mm}
    \caption{From left to right: basis images of water and bone, VMIs at energies 60 keV and 100 keV of a clinical image. From top to bottom: the truths, the results after 100 iterations by the proposed algorithm AFIRE with the implementations of FBP, CG and L-BFGS for the approximated inverse $\mtbP^{+}$.}
    \label{fig:discuss_recon_P_inv}
\end{figure}
\begin{figure}[htbp]
    \centering
    \includegraphics[width=0.49\textwidth]{./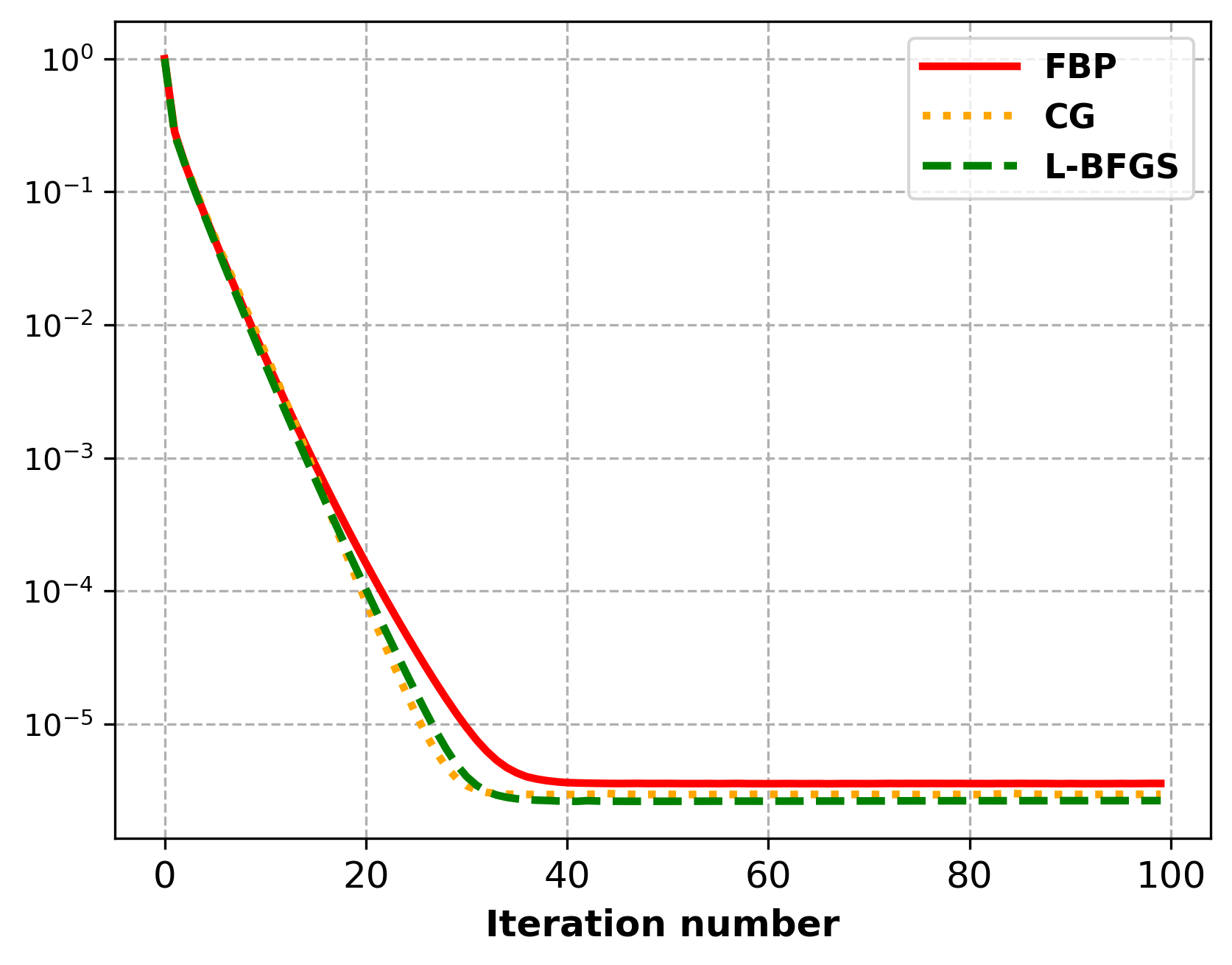}
    \includegraphics[width=0.49\textwidth]{./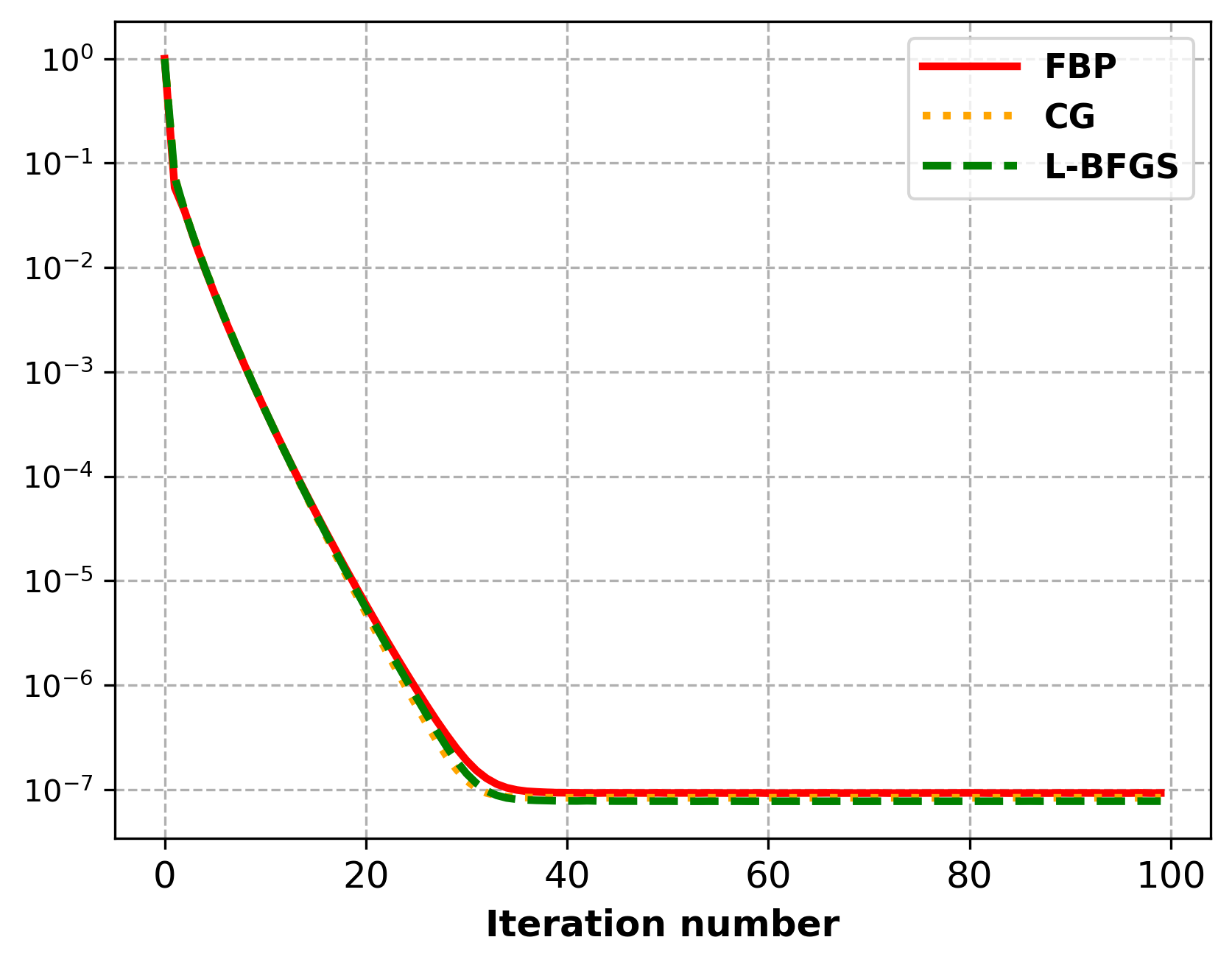} 
    \caption{Metrics {\rm RE$_{\mtbf}^{k}$} (left) and {\rm RE$_{\mtbg}^{k}$} (right) are plotted in semi-log scale as functions of iteration numbers for reconstructing basis images of a clinical image in \cref{fig:discuss_recon_P_inv} by the proposed algorithm with different implementations of FBP, CG and L-BFGS for the approximated inverse $\mtbP^{+}$.}
    \label{fig:discuss_RE_choice_P_inv}
\end{figure}
Moreover, as indicated in \cref{fig:discuss_RE_choice_P_inv}, the metric curves of $\text{RE}_{\mtbf}^{k}$ and $\text{RE}_{\mtbg}^{k}$ reveal the similar convergence behavior among those implementations. The implementation that requires inner iterations, such as CG and L-BFGS, achieves a slightly lower $\text{RE}_{\mtbf}^{k}$ than FBP after the same number of outer iterations. In contrast, as evidenced by the comparison of running time in \cref{tab:time_P_inv}, the FBP implementation is faster in a single (outer) iteration and when the errors $\text{RE}_{\mtbf}^{k}$ and $\text{RE}_{\mtbg}^{k}$ reach $10^{-5}$. 

As described in \cref{rem:trade_off}, a more precise implementation of $\mtbP^{+}$ may improve the convergence rate. However, it may increase the computational cost or reduce the efficiency. Therefore, there is a trade-off between the reconstructed accuracy and computational efficiency.

\begin{table}[htbp]
    \centering
    \caption{The average time (in seconds) of 10 independent runs by the proposed algorithm AFIRE with different implementations for $\mtbP^{+}$ to accomplish various tasks in reconstructing basis images of a clinical image in \cref{fig:discuss_recon_P_inv}.}
    \vspace{5mm}
    \begin{tabular}{cccc}
    \Xhline{1pt}
    Time (s) & FBP & CG & L-BFGS \\
    \Xhline{0.5pt}
    Single (outer) iteration & $0.47$ & 0.80 & 3.22  \\[0.5mm]
    $\text{RE}_{\mtbf}^{k}$ reaches $10^{-5}$ & $13.99$ & 20.70 & 86.81  \\[0.5mm]
    $\text{RE}_{\mtbg}^{k}$ reaches $10^{-5}$ & $8.86$ & 15.13 & 61.09  \\[0.5mm]
    \Xhline{1pt}
    \end{tabular}
    \label{tab:time_P_inv}
\end{table}

\subsection{Extension to the geometric-consistent situation}

From the derivation in \cref{sec:propose_alg}, we know that the proposed algorithm AFIRE can be naturally extended to the geometric-consistent case. To illustrate this, what is different from before is that the identical equidistant fan-beam geometry is used for both 80-kV and 140-kV spectra. There are totally 900 views uniformly distributed over $[0,2\pi)$ and 1086 equal-width detectors positioned on the opposing curved detector bank. 

The proposed algorithm is used to reconstruct the basis images from the noiseless data. In the geometric-consistent case, the two-step DDD method can be used as a benchmark algorithm for comparison. In the first step, to estimate the basis sinograms  in data domain, we use Newton's method to solve a lot of very small-scale independent nonlinear systems along different X-ray paths. When the iteration number is $10$, the relative error has arrived at $10^{-14}$. In the second step, to reconstruct the basis images, the FBP with 1-bandwidth Ram--Lak filter is applied to invert the finally obtained basis sinograms. To show the intermediate results, the FBP method is also utilized to reconstruct the basis images from the estimated basis sinograms at each Newton iteration in the first step.

After running \cref{alg:solve_dd_prob} for 100 iterations with setting $\hat{\mtbf}=\bzero$, the obtained basis images and VMIs are shown in \cref{fig:discuss_recon_geometry_consis}, as well as the corresponding results gained by the two-step DDD method. Furthermore, the relative error curves are presented in \cref{fig:discuss_RE_geometry_consis}. These comparisons demonstrate that the performance of the proposed algorithm far exceeds the benchmark two-step DDD method. Hence, the proposed algorithm cannot only be successfully extended to deal with the geometric-consistent case of MSCT image reconstruction, but also achieve very high accuracy and efficiency.
\begin{figure}[htbp]
    \centering
    \includegraphics[width=1.\textwidth]{./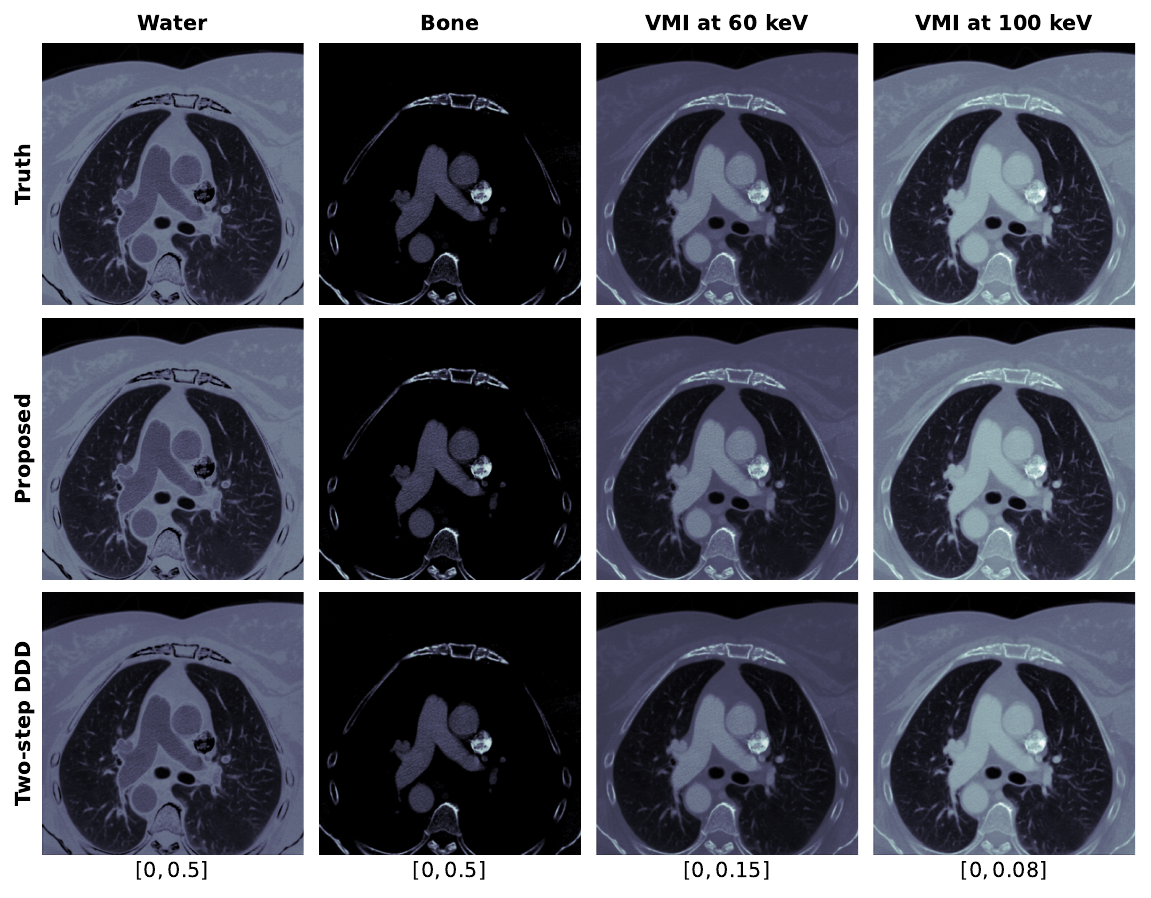}
    \vspace{-5mm}
    \caption{From left to right: basis images of water and bone, VMIs at energies 60 keV and 100 keV of a clinical image. From top to bottom: the truths, the results by the proposed algorithm AFIRE after 100 iterations and the benchmark two-step DDD method.}
    \label{fig:discuss_recon_geometry_consis}
\end{figure}

\begin{figure}[htbp]
    \centering
    \includegraphics[width=0.49\textwidth]{./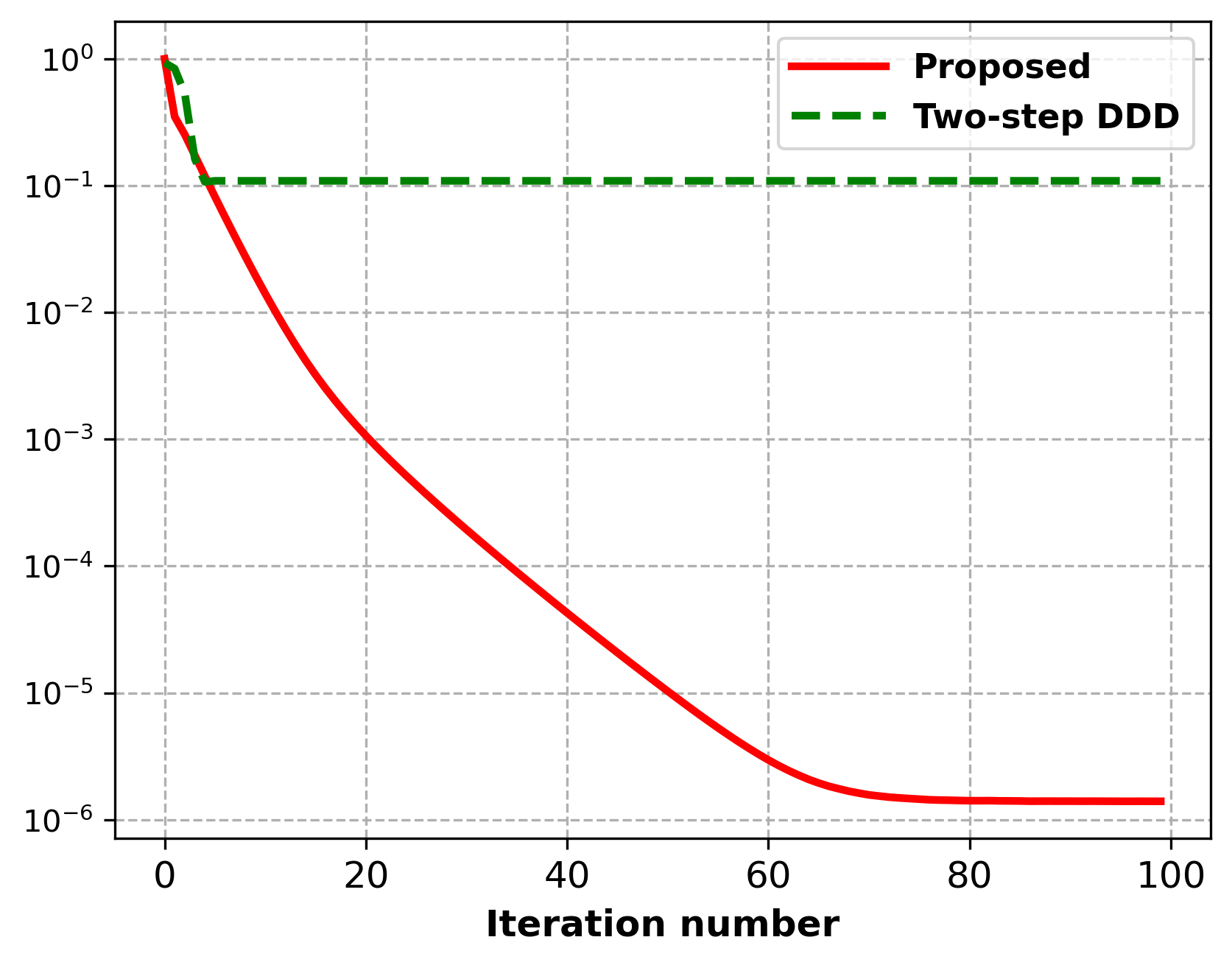}
    \includegraphics[width=0.49\textwidth]{./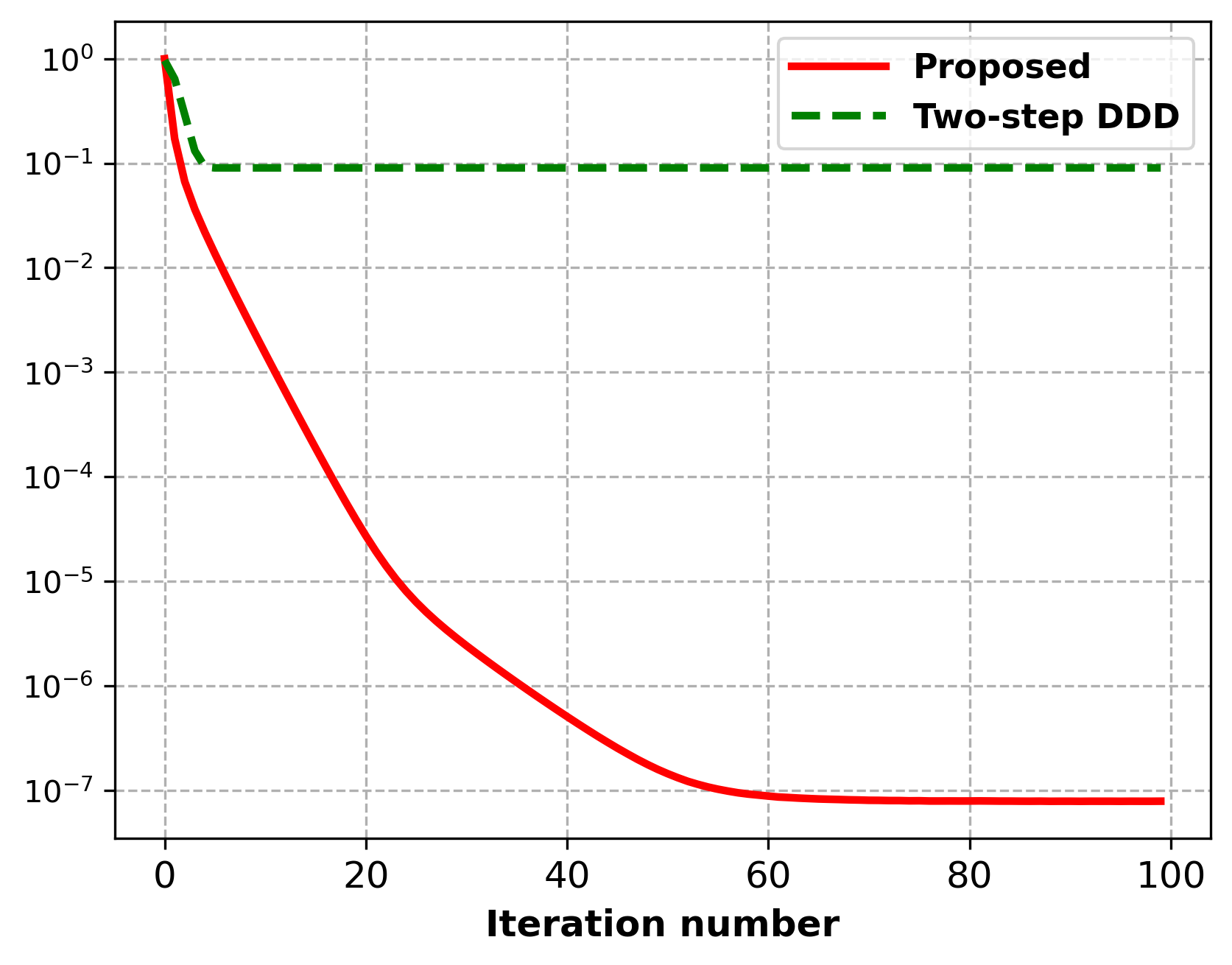} 
    \caption{Metrics {\rm RE$_{\mtbf}^{k}$} (left) and {\rm RE$_{\mtbg}^{k}$} (right) are plotted in semi-log scale as functions of iteration numbers for the geometric-consistent MSCT reconstruction problem using the proposed algorithm AFIRE and the benchmark two-step DDD method.}
    \label{fig:discuss_RE_geometry_consis}
\end{figure}

\subsection{Validation with the basis images generated by randomly sampled Gaussian distribution}

In this work, the AFIRE algorithm is developed for the geometric-inconsistent multispectral CT with a mildly full scan. In this case, the number of measurements is comparable to the total size of the employed basis images. The sufficient projection data that obtained in this case allows us to treat the basis images as entirely unknown variables, and thus can achieve an efficient reconstruction without priors on the basis images. 

To further validate the statement above, we conduct additional experiments using the basis images generated by randomly sampled Gaussian distribution as truths for the water and bone basis images. Specifically, we generate 1000 random array pairs as the basis images using \texttt{numpy.random} with different seeds. All the other experimental settings are consistent with those described in \cref{subsec:noiseless_forbild}. After running the AFIRE algorithm for 100 iterations, we plot the scattered distributions of the metrics of relative errors $\text{RE}_{\mtbf}^{k}$ and $\text{RE}_{\mtbg}^{k}$ compared to those different random truths in \cref{fig:REk_rdm_truth}. It demonstrates that the AFIRE algorithm can indeed arrive at a high-precision reconstruction even for the randomly generated images. This highlights the robustness and effectiveness of the proposed AFIRE algorithm. 
\begin{figure}[htbp]
    \centering
    \includegraphics[width=0.49\textwidth]{./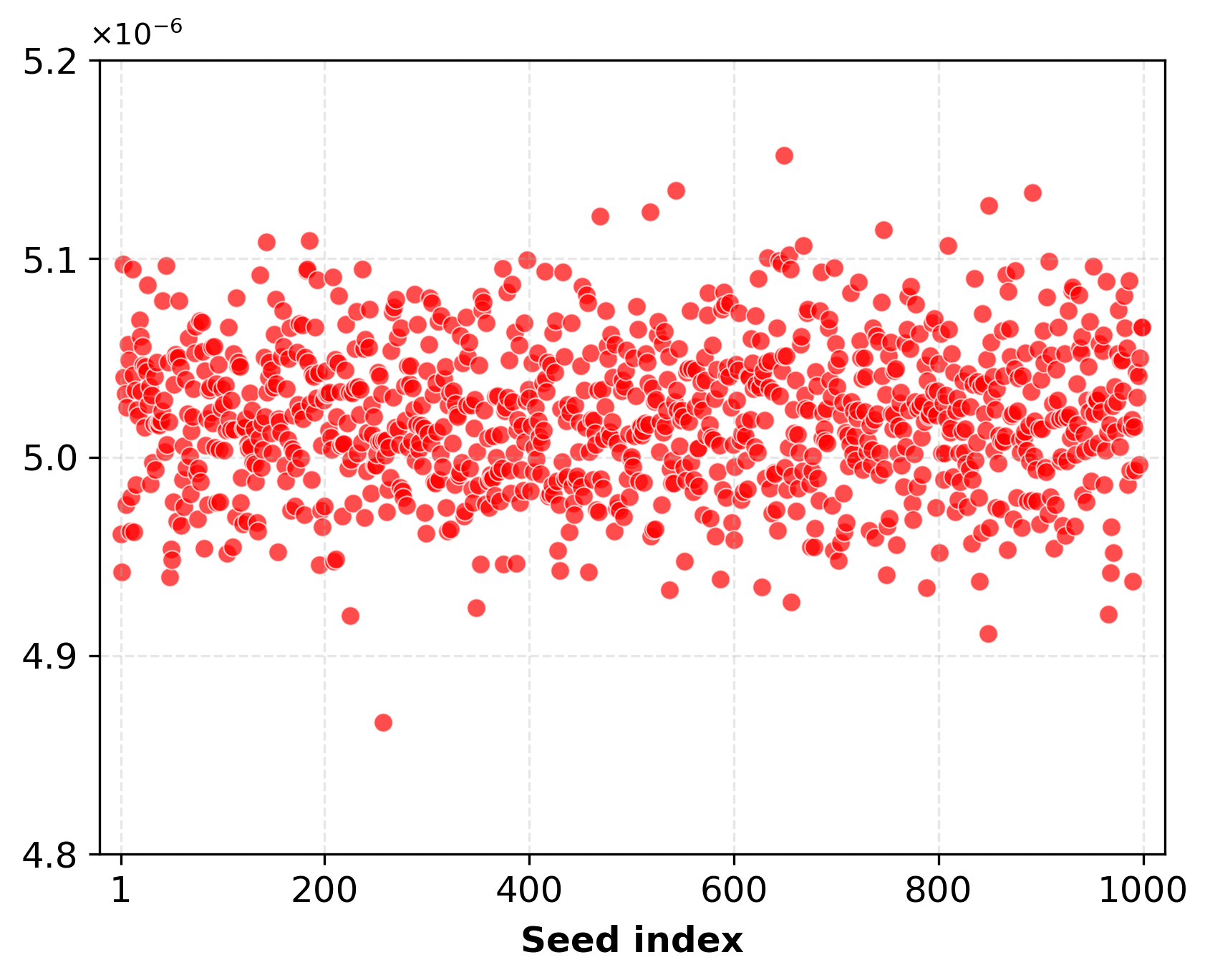}
    \includegraphics[width=0.49\textwidth]{./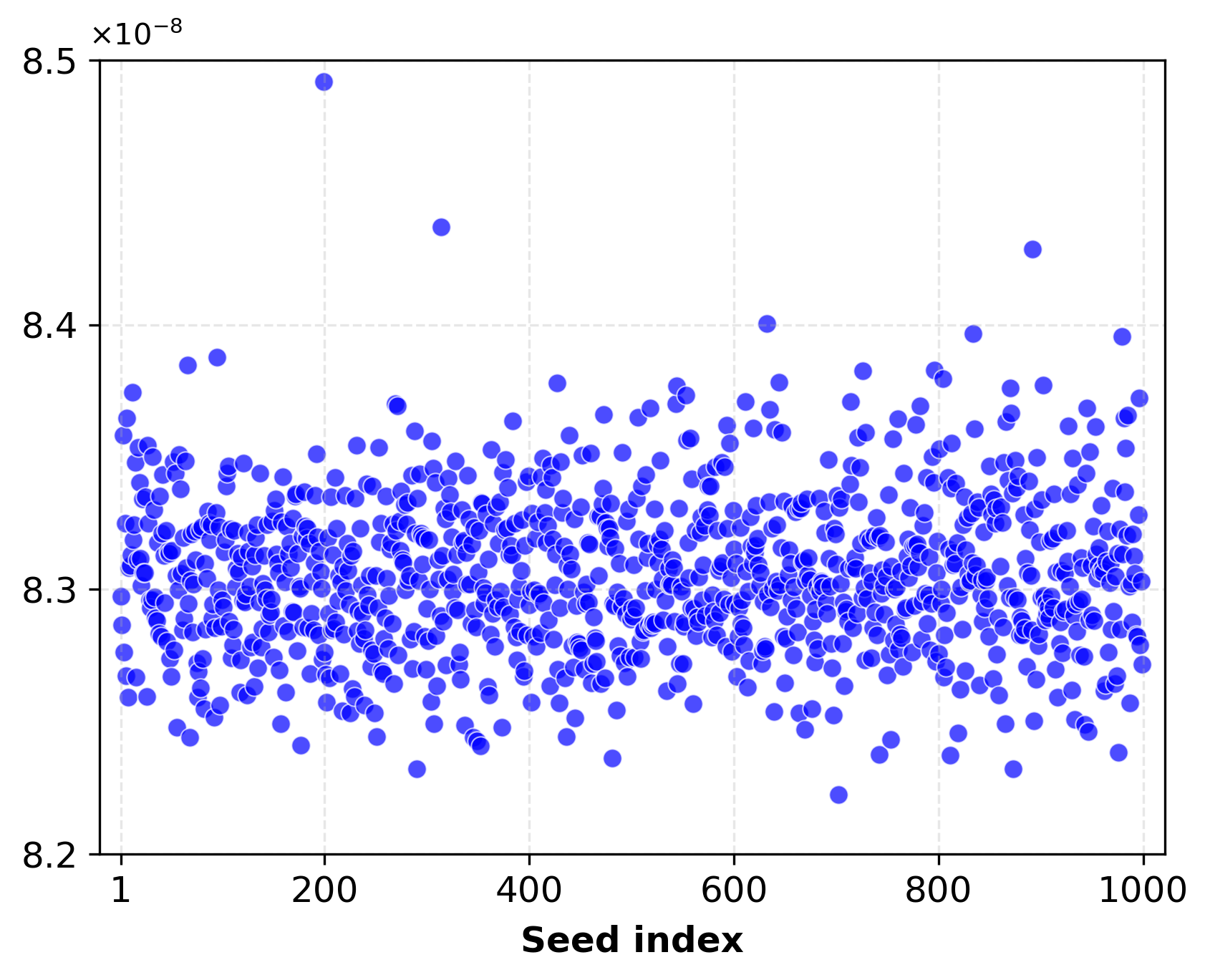}
    \caption{The scattered diagrams of the metrics of relative errors {\rm RE$_{\mtbf}^{k}$} (left) and {\rm RE$_{\mtbg}^{k}$} (right) for reconstructing the basis images generated by randomly sampled Gaussian distribution by using the proposed AFIRE algorithm after 100 iterations from the noiseless data.}
    \label{fig:REk_rdm_truth}
\end{figure}

\section{Conclusion}\label{sec:conclucsion}

In this work, the image reconstruction algorithm was investigated for the nonlinear MSCT when the scanning geometries under different X-ray energy spectra are inconsistent or mismatched. 

We studied the associated CC (resp. DD)-data model, and discovered that the derivative operator  (gradient) of the involved nonlinear mapping at some special points, for example, at zero, can be represented as a composition (block multiplication) of a diagonal operator (matrix) composed of X-ray transforms (projection matrices) and a very small-scale matrix. Based on the insights, we proposed an accurate and fast image reconstruction algorithm called AFIRE respectively from the continuous, discrete and actual-use perspectives by leveraging the simplified Newton method. Under proper conditions, we further proved the convergence of the proposed algorithm AFIRE. 

Furthermore, several numerical experiments were carried out to verify that the proposed algorithm can accurately and effectively reconstruct the basis images in geometric-inconsistent DECT for noiseless and noisy projection data in the case of mildly full scan. Through numerical comparisons, it was shown that the proposed algorithm significantly outperforms some existing methods such as NCPD, NKM, IFBP and INTRPL with respect to the accuracy and efficiency. 

We also discussed the flexibility and extensibility of the proposed algorithm. It was demonstrated that the proposed algorithm is robust and flexible with regard to the selection of the special point $\hat{\mtbf}$ and the implementation of the approximated inverse $\mtbP^{+}$, and also can indeed arrive at a high-precision reconstruction even for the randomly generated images. Moreover, for the geometric-consistent situation, it was shown that the proposed algorithm has significant advantages over the benchmark two-step DDD method.

To the best of our knowledge, when the scanning geometries under different X-ray energy spectra are inconsistent or mismatched in MSCT, this is the first literature that uses the characteristics of the derivative operator (gradient) of the associated nonlinear mapping to construct the algorithm for accurate and fast image reconstruction. Moreover, this methodology may also provide insights into designing the algorithm of the same type for the corresponding low-dose or sparse-view scenarios.

\appendix
\section{Appendix}

\subsection{The proof of \cref{prop:Frechet_derivative}}\label{proof:G_F_diff}
\begin{proof}
 For any $(\bdomega,\bx)\in\mathbb{T}^n_q$, $K^{[q]}(\bdf)(\bdomega,\bx)$ can be rewritten as the function $H_q:\Real^D\rightarrow\Real$ which is given by
    \begin{align*}
       \bdz &\longmapsto \ln \int_{0}^{E_{\max}} s^{[q]}(E)\exp\left(-\sum_{d=1}^D  b_d(E) z_d \right)\rmd E,
    \end{align*}
    where $\bdz=[z_1,\ldots,z_D]^{\tra}$ and $z_d=\Xray_q f_d(\bdomega,\bx)$. It is easy to compute that 
    \begin{equation}\label{eq:partial_Hq}
        \frac{\partial H_q(\bdz)}{\partial z_d} = -\int_{0}^{E_{\max}} \frac{s^{[q]}(E)\exp\left( -\sum_{d=1}^D  b_d(E) z_d\right)}{ \int_{0}^{E_{\max}} s^{[q]}(E)\exp\left(-\sum_{d=1}^D  b_d(E) z_d \right) \rmd E} b_d(E)~\rmd E.
    \end{equation} 
    By \cref{eq:partial_Hq} and the nonnegative bounded property of functions $\{b_d\}_{d=1}^D$, we have 
    \begin{equation}\label{eq:nabla_Hq}
        \|\nabla H_q(\bdz)\|_1 =\sum_{d=1}^D \left|\frac{\partial H_q(\bdz)}{\partial z_d}\right| \le \sum_{d=1}^D \sup_{E} b_d(E).
    \end{equation} 
    Notice that $K^{[q]}(\bzero)(\bdomega,\bx)=0$ for any $(\bdomega,\bx)\in\mathbb{T}^n_q$, and using  \cref{eq:nabla_Hq}, we obtain  
    \begin{multline}\label{eq:K_qf}
        \big{|} K^{[q]}(\bdf)(\bdomega,\bx) \big{|}  = \big{|} K^{[q]}(\bdf)(\bdomega,\bx)-K^{[q]}(\bzero)(\bdomega,\bx)\big{|}    =\left|H_q(\bdz) - H_q(0) \right|\\ \le \left(\sup_{\bdz}\|\nabla H_q(\bdz)\|_1\right)\|\bdz\|_1 \le \left(\sum_{d=1}^D \sup_{E} b_d(E)\right) \sum_{d=1}^D\left|\Xray_q f_d(\bdomega,\bx)\right|. 
    \end{multline}
    Consequently, for any $\bdf\in\mathbfcal{F}^D$, by \cref{eq:K_qf}, we have 
    \begin{align*}
        \|K^{[q]}(\bdf)\|_{\mathcal{G}_q} &= \left(\int_{\mathbb{S}^{n-1}_q}\int_{\Omega_q} \big{|}  K^{[q]}(\bdf)(\bdomega,\bx) \big{|}^2  \rmd \boldsymbol{\mu}_{\bx} \rmd \bdomega\right)^{1/2} \\
        &\le  \left(\sum_{d=1}^D \sup_{E} b_d(E)\right) \left( \int_{\mathbb{S}^{n-1}_q}\int_{\Omega_q} \Bigl(\sum_{d=1}^D \left|\Xray_q f_d(\bdomega,\bx)\right|\Bigr)^2 \rmd \boldsymbol{\mu}_{\bx} \rmd \bdomega \right)^{1/2} \\
        &\le  \sqrt{D}\left(\sum_{d=1}^D \sup_{E} b_d(E)\right) \left( \int_{\mathbb{S}^{n-1}_q}\int_{\Omega_q} \sum_{d=1}^D \left|\Xray_q f_d(\bdomega,\bx)\right|^2 \rmd \boldsymbol{\mu}_{\bx} \rmd \bdomega \right)^{1/2} \\
        &\le \sqrt{D}\left(\sum_{d=1}^D \sup_{E} b_d(E)\right) \sum_{d=1}^D \|\Xray_q f_d\|_{\mathcal{G}_q}, 
    \end{align*}
    where $\rmd \boldsymbol{\mu}_{\bx}$ denotes the Euclidean measure on the hyperplane $\bdomega^{\perp}$. Since $\Xray_q f_d\in\mathcal{G}_q$ for each $d$, we obtain $K^{[q]}(\bdf)\in\mathcal{G}_q$. 

    For any $\bdh=[h_1,\ldots,h_D]  \in \mathbfcal{F}^D$, by the chain rule, we compute 
    \begin{equation}\label{eq:G_derivative_K}
    \partial_{\varepsilon} K^{[q]} (\bdf + \varepsilon \bdh)\mid_{\varepsilon=0} = - \int_0^{E_{\max}}  \Phi^{[q]}(\bdf; E) \left( \sum_{d=1}^D b_d(E) \Xray_q h_d\right) \rmd E,
    \end{equation}
    where $\Phi^{[q]}(\bdf; E)$ is given as in \cref{eq:Phi_f} such that  
    \[\int_{0}^{E_{\max}} \Phi^{[q]}(\bdf; E)~ \rmd E = 1 \quad \text{for}~ q=1,\ldots,Q.
    \]
    Then, from \cref{eq:G_derivative_K}, we have the following estimate 
    \begin{align}\label{eq:G_derivative_K_estimate}
    \big\|\partial_{\varepsilon} K^{[q]} (\bdf + \varepsilon \bdh)\mid_{\varepsilon=0}\big\|_{\mathcal{G}_q}&=\big\|\sum_{d=1}^D \int_0^{E_{\max}}  \Phi^{[q]}(\bdf; E) b_d(E) \Xray_q h_d ~\rmd E\big\|_{\mathcal{G}_q}\nonumber\\ 
    &\le \left(\sup_{d,E} b_d(E)\right)\sum_{d=1}^D \big\| \int_0^{E_{\max}}  \Phi^{[q]}(\bdf; E) \Xray_q h_d ~\rmd E\big\|_{\mathcal{G}_q} \nonumber\\ 
    &= \left(\sup_{d,E} b_d(E)\right) \sum_{d=1}^D \left\|\Xray_q h_d\right\|_{\mathcal{G}_q}\nonumber\\
    &\le \left( \sup_{d,E} b_d(E)\right) C  \sum_{d=1}^D \|h_d\|_{\mathcal{F}}. 
    \end{align}
Note that the last inequality follows readily from the result in \cite[theorem 2.10]{natterer2001mathematical}, and the $C$ is a positive constant depending only on $\alpha = -1/2$ and the domain $\domain$. Hence, using \cref{eq:G_derivative_K} and \cref{eq:G_derivative_K_estimate}, we conclude that $K^{[q]}$ is G\^ateaux differentiable, and its G\^ateaux derivative is computed as \cref{eq:Frech_deriva}. 
    
Furthermore, from the continuity of $\Xray_q$, it is easy to obtain that the mapping $\bdf \mapsto \Diff K^{[q]} (\bdf,\cdot)$ is continuous. By \cite[proposition 3.2.15]{Nonlinear_ana07}, $K^{[q]}$ is Fréchet differentiable and its Fréchet derivative is equal to $\Diff K^{[q]}(\bdf,\cdot)$. By the definition of the norm of Cartesian space, we can further prove that $\bdK$ is also Fréchet differentiable. 
\end{proof}

\subsection{The proof of \cref{lem:continuous}}\label{proof:cc_lemma}
\begin{proof}    
For any $\bdf$ and $(\bdomega,\bx)$, we can define the matrix 
\[
\Psi(\bdf)(\bdomega,\bx) := \bigl(\psi^{[q]}_d(\bdf)(\bdomega,\bx)\bigr)\in\Real^{Q\times D},
\] 
where
\begin{equation*}
    \psi^{[q]}_d(\bdf)(\bdomega,\bx) = \int_{0}^{E_{\max}} \Phi^{[q]}(\bdf; E)(\bdomega,\bx) b_d(E)~\rmd E.
\end{equation*}
Since $\Phi^{[q]}(\bdf; \,\cdot\,)$ as given in \cref{eq:Phi_f} is a nonnegative normalized function with respect to $E$, together with the given conditions, we have
\begin{align*}
    \|\Psi(\bdf)(\bdomega,\bx) - &\bvarphi\|_F^2 \\
    &=\sum_{q=1}^Q \sum_{d=1}^D \left(\int_0^{E_{\max}}  \big(\Phi^{[q]}(\bdf; E)(\bdomega,\bx) - \Phi^{[q]}(\hat{\bdf}; E)\big)  b_d(E) ~\rmd E\right)^2\\
    &\le \eta^2 \sum_{q=1}^Q \sum_{d=1}^D \left(\int_0^{E_{\max}}  \Phi^{[q]}(\hat{\bdf}; E) b_d(E) ~\rmd E\right)^2 = \eta^2 \|\bvarphi\|_F^2.
\end{align*}
Subsequently, for any $\bdz\in\Real^D$, by Cauchy--Schwarz inequality and the property of the matrix norm, we have
\begin{align*}
\|(\Psi(\bdf)(\bdomega,\bx)- \bvarphi)\bdz\|_2 
&\le \|\Psi(\bdf)(\bdomega,\bx)- \bvarphi\|_2  \|\bdz\|_2 
\le \|\Psi(\bdf)(\bdomega,\bx)- \bvarphi\|_F \|\bdz\|_2 \\
& \le \eta\|\bvarphi\|_F \|\bdz\|_2 \le \eta\kappa_F(\bvarphi) \|\bvarphi\bdz\|_2.
\end{align*}
Hence, for any $\bdf,\bdh\in\mathbfcal{F}^D$, based on the formula of Fréchet derivative of $\bdK$ and the above estimation, we obtain
\begin{align*}
\big\| \Diff \bdK(\bdf,\bdh) - \Diff \bdK(\hat{\bdf},&\bdh)\big\|_{\mathbfcal{G}^Q}^2 \\ 
&= \int_{\mathbb{S}^{n-1}} \int_{\bdomega^{\perp}} \left\|\left(\Psi(\bdf)(\bdomega,\bx)- \bvarphi\right)\bXray\bdh(\bdomega,\bx)\right\|_2^2 \rmd \boldsymbol{\mu}_{\bx} \rmd \bdomega \\
&\le \int_{\mathbb{S}^{n-1}} \int_{\bdomega^{\perp}} (\eta\kappa_F(\bvarphi))^2 \|\bvarphi\bXray\bdh(\bdomega,\bx)\|_2^2 \rmd \boldsymbol{\mu}_{\bx} \rmd \bdomega \\
&= (\eta\kappa_F(\bvarphi))^2 \|\Diff \bdK(\hat{\bdf},\bdh)\|_{\mathbfcal{G}^Q}^2,
\end{align*}
where $\bXray\bdh(\bdomega,\bx)=[\Xray_1 h_1(\bdomega,\bx),\ldots,\Xray_D h_D(\bdomega,\bx)]^{\tra}\in\Real^D$. The desired result is obtained. 
\end{proof}

\subsection{The proof of \cref{lem:discrete}}\label{proof:dd_lemma}
\begin{proof}
    Notice that for any $\mtbf$, $\mtbw_j^{[q]}(\mtbf) \ge\bzero$ and $\bigl({\mtbw_j^{[q]}(\mtbf)}\bigr)^{\tra} \boldsymbol{1}=1$, which means that $\mtbw_j^{[q]}(\mtbf) \in \Real^M_+ $ is in the unit simplex. Hence, by the gradient calculation in \cref{subsubsec:dd_alg}, and using the given conditions, we obtain
    \begin{align*}
        \big\|\nabla \mtbK(\hat{\mtbf}) - \nabla \mtbK(\mtbf)\big\|_F^2 =& \sum_{q=1}^Q \sum_{j=1}^{J_q} \|\nabla K_j^{[q]}(\hat{\mtbf}) - \nabla K_j^{[q]}(\mtbf)\|_2^2\\ 
        =& \sum_{q=1}^Q \sum_{j=1}^{J_q} \sum_{d=1}^D |\big(\mtbw_j^{[q]}(\hat{\mtbf}) - \mtbw_j^{[q]}(\mtbf)\big)^{\tra} \mtbb_d \mtbp^{[q]}_j |^2 \\ 
        =& \sum_{q=1}^Q \sum_{j=1}^{J_q} \sum_{d=1}^D |\mtbb_d^{\tra} {\hat{\mtbw}}^{[q]} - \mtbb_d^{\tra}{\mtbw_j^{[q]}(\mtbf)} |^2 \|\mtbp^{[q]}_j\|_2^2 \\ 
        \le&\sum_{q=1}^Q \sum_{j=1}^{J_q} \sum_{d=1}^D \tilde{\eta}^2 |\mtbb_d^{\tra} \hat{\mtbw}^{[q]} |^2 \|\mtbp^{[q]}_j\|_2^2 \\ 
        =& \tilde{\eta}^2\|\nabla \mtbK(\hat{\mtbf})\|_F^2.
    \end{align*}
    Consequently, by the property of the matrix norm, the above estimate in order, we have 
    \begin{align*}
        \big\|\nabla \mtbK(\hat{\mtbf})\bdh - \nabla \mtbK(\mtbf)\mtbh\big\|_2 &\le \|\nabla \mtbK(\hat{\mtbf}) - \nabla \mtbK(\mtbf)\|_F \|\mtbh\|_2 
        \le \tilde{\eta} \|\nabla \mtbK(\hat{\mtbf})\|_F \|\mtbh\|_2 \\ 
        &\le \tilde{\eta} \|\nabla \mtbK(\hat{\mtbf})\|_F \sigma_{\min}^{-1}\bigl(\nabla \mtbK(\hat{\mtbf})\bigr) \|\nabla \mtbK(\hat{\mtbf})\mtbh\|_2 \\ 
        &= \tilde{\eta} \kappa_F(\mtbP\otimes \bphi) \|\nabla \mtbK(\hat{\mtbf})\mtbh\|_2,
    \end{align*}
which completes the proof.
\end{proof}

\bibliographystyle{plain}
\bibliography{MCTreferences}

\begin{thebibliography}{10}

\bibitem{adler2018odlgroup}
J.~Adler, H.~Kohr, A.~Ringh, J.~Moosmann, B.~Sebanert, M.~J. Ehrhardt, G.~R.
  Lee, K.~Niinimaki, B.~Gris, O.~Verdier, J.~Karlsson, G.~Zickert, W.~J.
  Palenstijn, O.~{\"O}ktem, C.~Chen, H.~A. Loarca, and M.~Lohmann.
\newblock {odlgroup/odl: ODL 0.7.0}.
\newblock {\em Zenodo}, 2018.

\bibitem{MSCTbook22}
H.~Alkadhi, A.~Euler, D.~Maintz, and D.~Sahani, editors.
\newblock {\em {Spectral Imaging: Dual-Energy, Multi-Energy and Photon-Counting
  CT}}.
\newblock Springer Cham, Cham, 2022.

\bibitem{Alvarez19Invert}
R.~E. Alvarez.
\newblock Invertibility of the dual energy x-ray data transform.
\newblock {\em Medical Physics}, 46:93--103, 2019.

\bibitem{alma76}
R.~E. Alvarez and A.~Macovski.
\newblock Energy-selective reconstructions in {X}-ray computerized tomography.
\newblock {\em Phys. Med. Biol.}, 21(5):733--744, 1976.

\bibitem{Bal_2020}
G.~Bal and F.~Terzioglu.
\newblock Uniqueness criteria in multi-energy {CT}.
\newblock {\em Inverse Problems}, 36(6):065006, 2020.

\bibitem{barber2016algorithm}
R.~F. Barber, E.~Y. Sidky, T.~G. Schmidt, and X.~Pan.
\newblock {An algorithm for constrained one-step inversion of spectral CT
  data}.
\newblock {\em Physics in Medicine \& Biology}, 61(10):3784--3818, 2016.

\bibitem{BuzugCT08}
T.~Buzug.
\newblock {\em {Computed Tomography: From Photon Statistics to Modern Cone-Beam
  CT}}.
\newblock Springer Berlin, Heidelberg, Berlin, Heidelberg, 2008.

\bibitem{cai2013full}
C.~Cai, T.~Rodet, S.~Legoupil, and A.~Mohammad-Djafari.
\newblock {A full-spectral Bayesian reconstruction approach based on the
  material decomposition model applied in dual-energy computed tomography}.
\newblock {\em Medical physics}, 40(11):111916, 2013.

\bibitem{chpan17}
B.~Chen, Z.~Zhang, E.~Y. Sidky, D.~Xia, and X.~Pan.
\newblock Image reconstruction and scan configurations enabled by
  optimization-based algorithms in multispectral {CT}.
\newblock {\em Phys. Med. Biol.}, 62:8763--8793, 2017.

\bibitem{chpan21}
B.~Chen, Z.~Zhang, D.~Xia, E.~Sidky, and X.~Pan.
\newblock Non-convex primal-dual algorithm for image reconstruction in spectral
  {CT}.
\newblock {\em Computerized Medical Imaging and Graphics}, 87:101821, 2021.

\bibitem{chen2020fast}
C.~Chen, R.~Wang, C.~Bajaj, and O.~{\"O}ktem.
\newblock {An efficient algorithm to compute the X-ray transform}.
\newblock {\em International Journal of Computer Mathematics},
  99(7):1325--1343, 2022.

\bibitem{Deuflhard_11}
P.~Deuflhard.
\newblock {\em Newton Methods for Nonlinear Problems: Affine Invariance and
  Adaptive Algorithms}.
\newblock Springer, Berlin, Heidelberg, 2011.

\bibitem{Ding21_invert}
Y.~Ding, E.~Clarkson, and A.~Ashok.
\newblock {Invertibility of multi-energy X-ray transform}.
\newblock {\em Medical Physics}, 48(10):5959--5973, 2021.

\bibitem{Nonlinear_ana07}
P.~Drábek and J.~Milota.
\newblock {\em {Methods of nonlinear analysis: Applications to differential
  equations.}}
\newblock Birkhäuser Basel, second edition, 01 2007.

\bibitem{elbakri2002statistical}
I.~A. Elbakri and J.~A. Fessler.
\newblock {Statistical image reconstruction for polyenergetic X-ray computed
  tomography}.
\newblock {\em IEEE transactions on medical imaging}, 21(2):89--99, 2002.

\bibitem{gao23NKM}
Y.~Gao and C.~Chen.
\newblock {Convergence Analysis of Nonlinear Kaczmarz Method for Systems of
  Nonlinear Equations with Componentwise Convex Mapping}.
\newblock {\em SIAM Journal on Imaging Sciences}, 18(1):120--151, 2025.

\bibitem{gao22EPD}
Y.~Gao, X.~Pan, and C.~Chen.
\newblock {An extended primal-dual algorithm framework for nonconvex problems:
  application to image reconstruction in spectral CT}.
\newblock {\em Inverse Problems}, 38(8):085011, 2022.

\bibitem{gao23solution}
Y.~Gao, X.~Pan, and C.~Chen.
\newblock {Analysis of Solution Existence, Uniqueness, and Stability of
  Discrete Basis Sinograms in Multispectral CT}.
\newblock {\em Journal of Mathematical Imaging and Vision}, 66(4):741--757,
  2024.

\bibitem{SPCT_23review}
J.~Greffier, N.~Villani, D.~Defez, D.~Dabli, and S.~Si-Mohamed.
\newblock {Spectral CT imaging: Technical principles of dual-energy CT and
  multi-energy photon-counting CT}.
\newblock {\em Diagnostic and Interventional Imaging}, 104(4):167--177, 2023.

\bibitem{Hounsfield73CT}
G.~N. Hounsfield.
\newblock {Computerized transverse axial scanning (tomography): Part 1.
  Description of system}.
\newblock {\em Br. J. Radiol.}, 46:1016--1022, 1973.

\bibitem{hsieh15}
J.~Hsieh.
\newblock {\em Computed tomography: principles, design, artifacts, and recent
  advances}.
\newblock SPIE Press, Bellingham, Washington, third edition, 2015.

\bibitem{NIST_data}
J.~H. Hubbell and S.~M. Seltzer.
\newblock {Tables of X-ray mass attenuation coefficients and mass
  energy-absorption coefficients 1 keV to 20 MeV for elements Z= 1 to 92 and 48
  additional substances of dosimetric interest}.
\newblock Technical report, National Inst. of Standards and Technology-PL,
  Gaithersburg, MD (United States), 1995.

\bibitem{Scherzer08}
B.~Kaltenbacher, A.~Neubauer, and O.~Scherzer.
\newblock {\em {Iterative regularization methods for nonlinear ill-posed
  problems}}.
\newblock Walter de Gruyter, Berlin, 2008.

\bibitem{leuschner2021lodopab}
J.~Leuschner, M.~Schmidt, D.~O. Baguer, and P.~Maass.
\newblock {LoDoPaB-CT, a benchmark dataset for low-dose computed tomography
  reconstruction}.
\newblock {\em Scientific Data}, 8(1):109, 2021.

\bibitem{Lev_nonunique_07}
Z.~Levine.
\newblock {Nonuniqueness in dual-energy CT}.
\newblock {\em Medical Physics}, 44(9):e202--e206, 2017.

\bibitem{zhao_19}
M.~Li, Y.~Zhao, and P.~Zhang.
\newblock {Accurate Iterative FBP Reconstruction Method for Material
  Decomposition of Dual Energy CT}.
\newblock {\em IEEE Transactions on Medical Imaging}, 38(3):802--812, 2019.

\bibitem{LongFessler14}
Y.~Long and J.~A. Fessler.
\newblock {Multi-Material Decomposition Using Statistical Image Reconstruction
  for Spectral CT}.
\newblock {\em IEEE Transactions on Medical Imaging}, 33(8):1614--1626, 2014.

\bibitem{maal76}
A.~Macovski, R.~E. Alvarez, J.~L.-H. Chan, J.~P. Stonestrom, and L.~M. Zatz.
\newblock Energy dependent reconstruction in {X}-ray computerized tomography.
\newblock {\em Comput. Biol. Med.}, 6:325--336, 1976.

\bibitem{DE_MECT15}
C.~McCollough, S.~Leng, L.~Yu, and J.~Fletcher.
\newblock {Dual- and Multi-Energy CT: Principles, Technical Approaches, and
  Clinical Applications}.
\newblock {\em Radiology}, 276(3):637--653, 2015.

\bibitem{mechlem2017joint}
K.~Mechlem, S.~Ehn, T.~Sellerer, E.~Braig, D.~M{\"u}nzel, F.~Pfeiffer, and
  P.~B. No{\"e}l.
\newblock {Joint statistical iterative material image reconstruction for
  spectral computed tomography using a semi-empirical forward model}.
\newblock {\em IEEE transactions on medical imaging}, 37(1):68--80, 2018.

\bibitem{mathCT01}
F.~Natterer.
\newblock {\em The Mathematics of Computerized Tomography}.
\newblock Society for Industrial and Applied Mathematics, Philadelphia, PA,
  2001.

\bibitem{natterer2001mathematical}
F.~Natterer and F.~W{\"u}bbeling.
\newblock {\em {Mathematical Methods in Image Reconstruction}}.
\newblock SIAM, Philadelphia, PA, 2001.

\bibitem{Iter_NonEq_book}
J.~M. Ortega and W.~C. Rheinboldt.
\newblock {\em Iterative Solution of Nonlinear Equations in Several Variables}.
\newblock SIAM, Philadelphia, PA, 2000.

\bibitem{spectrum_GUI}
D.~Philippe.
\newblock {SpectrumGUI}.
\newblock \url{https://sourceforge.net/projects/spectrumgui/}, 2015.

\bibitem{Shepp1978CT}
L.~A. Shepp and J.~B. Kruskal.
\newblock {Computerized Tomography: The New Medical X-Ray Technology}.
\newblock {\em American Mathematical Monthly}, 85(6):420--439, 1978.

\bibitem{zhang2013model}
R.~Zhang, J.-B. Thibault, C.~Bouman, K.~Sauer, and J.~Hsieh.
\newblock {Model-based iterative reconstruction for dual-energy X-ray CT using
  a joint quadratic likelihood model}.
\newblock {\em IEEE transactions on medical imaging}, 33(1):117--134, 2014.

\bibitem{zhaozz14}
Y.~Zhao, X.~Zhao, and P.~Zhang.
\newblock {An extended algebraic reconstruction technique (E-ART) for dual
  spectral CT}.
\newblock {\em IEEE Transactions on Medical Imaging}, 34(3):761--768, 2015.

\bibitem{Zou08}
Y.~Zou and M.~D. Silver.
\newblock {Analysis of Fast kV-switching in Dual Energy CT using a
  Pre-reconstruction Decomposition Technique}.
\newblock In J.~Hsieh and E.~Samei, editors, {\em Medical Imaging 2008: Physics
  of Medical Imaging}, volume 6913, page 691313. SPIE, 2008.

\end{thebibliography}

\end{document}